\newcommand{\ko}{\;\; ,}
\numberwithin{equation}{subsection}
\newtheorem{theorem}[subsection]{Theorem}
\newtheorem{classification-theorem}[subsection]{Classification Theorem}
\newtheorem{decomposition-theorem}[subsection]{Decomposition Theorem}
\newtheorem{proposition-definition}[subsection]{Proposition-Definition}
\newtheorem{periodicity-conjecture}[subsection]{Periodicity Conjecture}
\newtheorem{lemma}[subsection]{Lemma}
\newtheorem{proposition}[subsection]{Proposition}
\newtheorem{corollary}[subsection]{Corollary}
\newtheorem{question}[subsection]{Question}
\newtheorem{assumption}[subsection]{Assumption}
\newtheorem{remark}[subsection]{Remark}
\newcommand{\rad}{\operatorname{rad}\nolimits}
\newcommand{\res}{\operatorname{res}\nolimits}
\newcommand{\can}{\operatorname{can}\nolimits}
\newcommand{\Gr}{\operatorname{Gr}\nolimits}
\newcommand{\RHom}{\operatorname{RHom}\nolimits}
\newcommand{\Rlim}{\operatorname{Rlim}\nolimits}
\newcommand{\Hom}{\operatorname{Hom}\nolimits}
\newcommand{\im}{\operatorname{im}\nolimits}
\newcommand{\cok}{\operatorname{cok}\nolimits}
\newcommand{\Ext}{\operatorname{Ext}\nolimits}
\newcommand{\Ex}{\operatorname{E}\nolimits}
\newcommand{\Mod}{\operatorname{Mod }\nolimits}
\renewcommand{\mod}{\operatorname{mod}\nolimits}
\newcommand{\rep}{\operatorname{rep}\nolimits}
\newcommand{\proj}{\operatorname{proj}\nolimits}
\newcommand{\inj}{\operatorname{inj}\nolimits}
\newcommand{\Lex}{\operatorname{Lex}\nolimits}
\newcommand{\ind}{\operatorname{ind}\nolimits}
\newcommand{\Z}{\operatorname{\mathbb{Z}}\nolimits}
\newcommand{\N}{\operatorname{\mathbb{N}}\nolimits}
\newcommand{\gpr}{\operatorname{gpr}\nolimits}
\newcommand{\gin}{\operatorname{gin}\nolimits}
\newcommand{\GL}{\operatorname{GL}\nolimits}
\newcommand{\ten}{\otimes}
\newcommand{\lten}{\overset{\mathbf{L}}{\ten}}
\newcommand{\dimv}{\ul{\dim}\,}
\newcommand{\bt}{\bullet}
\newcommand{\iso}{\stackrel{_\sim}{\rightarrow}}
\newcommand{\liso}{\stackrel{_\sim}{\leftarrow}}
\newcommand{\id}{\mathbf{1}}
\newcommand{\arr}[1]{\stackrel{#1}{\rightarrow}}
\newcommand{\ca}{{\mathcal A}}
\newcommand{\cc}{{\mathcal C}}
\newcommand{\cd}{{\mathcal D}}
\newcommand{\ce}{{\mathcal E}}
\newcommand{\cm}{{\mathcal M}}
\newcommand{\cn}{{\mathcal N}}
\newcommand{\cR}{{\mathcal R}}
\newcommand{\cs}{{\mathcal S}}
\newcommand{\ct}{{\mathcal T}}
\newcommand{\cv}{{\mathcal V}}
\newcommand{\fg}{\mathfrak{g}}
\newcommand{\eps}{\varepsilon}
\renewcommand{\tilde}[1]{\widetilde{#1}}
\newcommand{\ul}[1]{\underline{#1}}
\newcommand{\ol}[1]{\overline{#1}}
\renewcommand{\hat}[1]{\widehat{#1}}
\renewcommand{\MR}[1]{}
\begin{document}
\title[Graded quiver varieties and derived dategories]{Graded quiver varieties\\ and derived categories}
\author{Bernhard Keller and Sarah Scherotzke}
\address{B.~K.~: Universit\'e Paris Diderot -- Paris~7, Institut
Universitaire de France, UFR de Math\'ematiques, Institut de
Math\'ematiques de Jussieu, UMR 7586 du CNRS, Case 7012, B\^atiment
Sophie Germain, 75205 Paris Cedex 13, France}
\address{S.~S.~: University of Bonn, Mathematisches Institut, 
Endenicher Allee 60, 53115 Bonn, Germany}

\email{keller@math.jussieu.fr, sarah@math.uni-bonn.de}


\dedicatory{To the memory of Dieter Happel}

\begin{abstract} Inspired by recent work of Hernandez--Leclerc and
Leclerc--Pla\-mon\-don we investigate the link between Nakajima's graded
affine quiver varieties associated with an acyclic connected quiver $Q$ and the
derived category of $Q$. As Leclerc--Plamondon have shown,
the points of these varieties can be interpreted
as representations of a category, which we call the (singular) Nakajima
category $\mathcal{S}$. We determine the quiver of $\mathcal{S}$ 
and the number of minimal relations between any two given vertices. 
We construct a $\delta$-functor $\Phi$ taking each finite-dimensional 
representation of $\mathcal{S}$ to an object of the derived category of $Q$. 
We show that the functor $\Phi$ establishes a bijection between the strata of the 
graded affine quiver varieties and the isomorphism classes of objects 
in the image of $\Phi$. If the underlying graph of $Q$ is an ADE Dynkin
diagram, the image is the whole derived category; otherwise, it is
the category of `line bundles over the non commutative curve given by $Q$'.
We show that the degeneration order between
strata corresponds to Jensen--Su--Zimmermann's degeneration
order on objects of the derived category. Moreover, if $Q$ is an ADE Dynkin quiver,
the singular category $\cs$ is weakly Gorenstein of dimension $1$ and its
derived category of singularities is equivalent to the derived category of $Q$.
\end{abstract}

\maketitle

\tableofcontents

\section{Introduction}

Let $Q$ be a Dynkin quiver, i.e. a quiver whose underlying graph is an ADE Dynkin
diagram $\Delta$. The (affine) graded quiver varieties associated with 
$Q$ were introduced by Nakajima in \cite{Nakajima01}. 
In type $A$, they generalize Ginzburg--Vasserot's graded nilpotent orbit closures 
\cite{GinzburgVasserot93}.
They have been of great importance in 
\begin{itemize}
\item[1)] Nakajima's geometric study \cite{Nakajima01}
of the finite-dimensional representations of the quantum affine algebra 
$U_q(\hat{\fg})$ associated with $\Delta$,
\item[2)] his related study of cluster algebras in \cite{Nakajima11} \cite{Nakajima13},
cf. also the survey \cite{Leclerc11}.
\end{itemize}
Let us elaborate on the second point: In \cite{Nakajima11}, 
Nakajima showed how to use
categories of perverse sheaves on graded quiver
varieties in order to investigate the cluster algebra $\ca_Q$
associated with $Q$ by Fomin--Zelevinsky \cite{FominZelevinsky02}. 
He did so not only for Dynkin quivers but more generally for arbitrary
bipartite quivers (where each vertex either has only incoming or
only outgoing arrows). He showed that the dual Grothendieck ring associated
with these categories (almost) yields a monoidal categorification of $\ca_Q$
in the sense of Hernandez--Leclerc \cite{HernandezLeclerc10},
who had constructed monoidal categorifications in types
$A_n$ and $D_4$ (they extend their results to all linearly
oriented quivers of type $A$ or $D$ in their recent article
\cite{HernandezLeclerc12}). Qin \cite{Qin12} \cite{Qin12a}
has generalized Nakajima's construction of graded quiver
varieties to all acyclic quivers $Q$ and Kimura--Qin 
\cite{KimuraQin12} have used these varieties to
extend Nakajima's results on cluster algebras to
this generality.

In section~9 of their remarkable study \cite{HernandezLeclerc11}
of deformed Grothendieck rings of quantum affine
algebras, Hernandez--Leclerc proved that the
graded quiver varieties associated with certain special weights 
are isomorphic to varieties of representations of $Q$ in such a way
that Nakajima's stratification corresponds to the
natural stratification by orbits. This description was
extended by Leclerc--Plamondon \cite{LeclercPlamondon12}, 
who showed that the quiver varieties in a much larger class are isomorphic
to varieties of representations of the repetitive 
algebra \cite{HughesWaschbuesch83} \cite{Happel87}
associated with $Q$, where Nakajima's stratification
again corresponds to the natural one by orbits. Let
us call LP-varieties the graded quiver varieties covered by
Leclerc-Plamondon's construction. 
Via Happel's equivalence \cite{Happel87} between the
stable category of the repetitive algebra of $Q$ and the derived
category of $Q$, Leclerc--Plamondon's isomorphism yields a
map from a given LP-variety to the set of isomorphism
classes of the derived category of $Q$ and,
as shown in \cite{LeclercPlamondon12}, the fibers of this map
are precisely the Nakajima strata. In this article, we
extend this last result in two directions simultaneously:
\begin{itemize}
\item[1)] from LP-varieties to all graded quiver varieties,
\item[2)] from Dynkin quivers to arbitrary acyclic quivers (using
Qin's definition \cite{Qin12} \cite{Qin12a} of graded quiver varieties).
\end{itemize}
Along the way, we obtain information on graded affine quiver
varieties as well as on their desingularization by Nakajima's
smooth (quasi-projective) graded quiver varieties. Among other results, 
\begin{itemize}
\item[-] we determine the quiver of the singular Nakajima category
 $\cs$, whose representations form the (affine) graded quiver varieties;
\item[-] we determine the number of minimal relations between 
the vertices of the quiver of $\cs$; remarkably, there are {\em no} relations
if $Q$ is a connected non Dynkin quiver;
\item[-] we construct the stratifying functor $\Phi$ from the category
of finite-dimensional $\cs$-modules to the derived category of $Q$
and use it to describe the strata and their closures in terms of the
derived category;
 \item[-] we describe the fibers of Nakajima's desingularization map using $\Phi$ 
 in the spirit of theorems by Lusztig \cite{Lusztig98a}, 
Savage--Tingley \cite{SavageTingley11} and Shipman \cite{Shipman10};
\item[-] we extend Happel's equivalence  \cite{Happel87} by showing that,
for a Dynkin quiver $Q$, the singular category $\cs$  is weakly Gorenstein and that
its derived category of singularities is equivalent to the derived category of $Q$;
\item[-] we vastly generalize the preceding point by showing that 
for any configuration $C$ of vertices of $\cs$ satisfying
a certain natural condition, the associated quotient $\cs_C$ of
$\cs$ is weakly Gorenstein with associated derived category of singularities
equivalent to the derived category of $Q$.
\end{itemize}
We refer to section~\ref{s:notation-and-main-results} for a more
detailed description of our main results. In the companion paper
\cite{KellerScherotzke13b}, we show how to use
Nakajima's desingularization map to generalize recent results by 
Cerulli--Feigin--Reineke \cite{CerulliFeiginReineke12} 
\cite{CerulliFeiginReineke13} on quiver Grassmannians.

Let us emphasize that throughout, we use framed quiver varieties.
As shown by Crawley--Boevey \cite{CrawleyBoevey01}, from the
point of view of the geometry of the individual quiver varieties, the framing
may be neglected. However, it is essential in the applications
to quantum affine algebras and cluster algebras
alluded to above as well as in the homological approach we use.
We hope to come back to the relation of this approach with
that of Frenkel--Khovanov--Schiffmann \cite{FrenkelKhovanovSchiffmann05}
in future work.

\subsection*{Acknowledgments} 
A large part of the work on this article was done during the 
cluster algebra program at the MSRI in fall 2012. The authors are 
grateful to the MSRI for financial support and ideal working conditions.
They are indebted to Bernard Leclerc and Pierre-Guy Plamondon for informing
them about the main results of \cite{LeclercPlamondon12} prior to
its appearance on the archive. They are obliged to Osamu Iyama for
pointing out reference \cite{Iyama05b} and to Harold Williams for
asking a question that lead to Theorem~\ref{thm:degeneration-order}.
They thank Giovanni Cerulli Irelli, David Hernandez, Osamu Iyama, Bernard Leclerc,
Pierre-Guy Plamondon, Fan Qin and Markus Reineke for stimulating conversations
and for helpful comments on a preliminary version of this article.

\section{Notation and main results}
\label{s:notation-and-main-results}

\subsection{Repetition quivers and Happel's theorem}
\label{ss:repetition-quivers-and-Happels-theorem}
Let $Q$ be a quiver. Thus, $Q$ is an oriented graph given by a set of vertices 
$Q_0$, a set of arrows $Q_1$ and two maps $s:Q_1\to Q_0$ and 
$t:Q_1 \to Q_0$ taking an arrow to its source vertex respectively its 
target vertex. We assume that $Q$ is finite (both $Q_0$ and $Q_1$ are finite) 
and acyclic (there are no oriented cycles in $Q$). 

The {\em repetition quiver $\Z Q$}, cf. \cite{Riedtmann80a}, has the set of 
vertices $\Z Q_0$ formed by all pairs $(i,p)$, where $i$ belongs to $Q_0$ 
and $p$ is an integer. For
each arrow $\alpha: i \to j$, it has the arrows $(\alpha, p): (i,p) \to (j,p)$
and $\sigma(\alpha,p): (j,p-1) \to (i,p)$, where $p$ runs through the integers.
If $\beta$ is an arbitrary arrow of $\Z Q$, we put $\sigma(\beta)=\sigma(\alpha,p)$
if $\beta=(\alpha,p)$ and $\sigma(\beta)=(\alpha, p-1)$ if $\beta=\sigma(\alpha,p)$.
We denote by $\tau: \Z Q \to \Z Q$ the automorphism of $\Z Q$ given by
the left translation by one unit: we have $\tau(i,p)=(i,p-1)$ and
$\tau(\beta)=\sigma^2(\beta)$ for all $i\in Q_0$, $p\in \Z$, and for all
arrows $\beta$ of $\Z Q$. For example, when $Q$ is the quiver $1 \to 2 \to 3$, the
repetition quiver has the form given in Figure~\ref{fig:repetition-quiver-A3}.
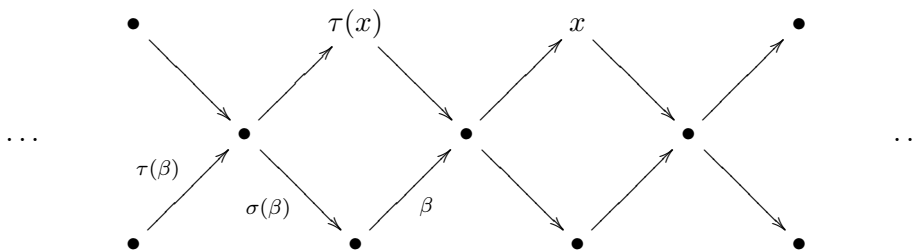
\begin{figure}
\[
\xymatrix@=0.5cm@!{
 & \bt \ar[dr] & & \tau(x) \ar[dr] & & x \ar[dr] & & \bt &  \\
 \ldots & & \bt \ar[ur] \ar[dr]_{\sigma(\beta)}  & & \bt \ar[ur] \ar[dr] & & \bt \ar[ur] \ar[dr] & & \ldots \\
  & \bt \ar[ur]^{\tau(\beta)} & & \bt \ar[ur]_\beta & & \bt \ar[ur] & & \bt & 
  }
\]
\caption{The repetition quiver $\Z Q$ for $Q$ of type $A_3$}
\label{fig:repetition-quiver-A3}
\end{figure}

Let $k$ be a field. Following \cite{Gabriel80} \cite{Riedtmann80}, we
define the {\em mesh category $k(\Z Q)$} to be the $k$-category whose
objects are the vertices of $\Z Q$ and whose morphism space from
$a$ to $b$ is the space of all $k$-linear combinations of paths from
$a$ to $b$ modulo the subspace spanned by all elements $u r_x v$, where
$u$ and $v$ are paths and 
\[
r_x = \sum_{\beta: y \to x} \sigma(\beta) \beta: \quad
\raisebox{1.225cm}{\xymatrix@R=0.5cm@C=0.5cm{  & y_1 \ar[dr]^{\beta_1} & \\
\tau(x) \ar[ur]^{\sigma(\beta_1)} \ar[dr]_{\sigma(\beta_s)}  & \vdots & x \\
 & y_s \ar[ur]_{\beta_s} & }}
\]
is the {\em mesh relator} associated with a vertex $x$ of $\Z Q$. Here the
sum runs over all arrows $\beta: y \to x$ of $\Z Q$. For
example, in the mesh category $k(\Z \vec{A}_2)$ associated with
the quiver $Q=\vec{A}_2: 1 \to 2$, the composition of any two
consecutive arrows vanishes. The computation of the morphism spaces in 
$k(\Z Q)$ is easy using additive functions, cf. section~6.5 of \cite{Gabriel80}.

Let $kQ$ be the path algebra of $Q$. It is a finite-dimensional, hereditary
$k$-algebra. For each vertex $i$ of $Q$, we write $e_i$ for the associated idempotent
of $kQ$ (the `lazy path at $i$') and $P_i = e_i kQ$ for the indecomposable
projective $kQ$-module whose head is the simple module $S_i$ concentrated
at the vertex $i$. Let
$\mod kQ$ be the category of all $k$-finite-dimensional right $kQ$-modules.
Let $\cd_Q$ be the bounded derived category $\cd^b(\mod kQ)$. 
It is a Krull--Schmidt category \cite{Happel87} and a triangulated
category. We write $\Sigma$ for its shift (=suspension) functor. Let
$\ind(\cd_Q)$ be a full subcategory of $\cd_Q$ whose objects form
a set of representatives of the isomorphism classes of indecomposable
objects of $\cd_Q$. The following theorem is Proposition~4.6 of \cite{Happel87}
and Theorem~5.6 of \cite{Happel88}.

\begin{theorem}[Happel, 1987] 
\label{thm:Happel}
There is a canonical fully faithful functor
\[
H: k(\Z Q) \to \ind(\cd_Q)
\]
taking each vertex $(i,0)$ to the indecomposable
projective module $P_i$, $i\in Q_0$. It is an equivalence iff
$Q$ is a Dynkin quiver (i.e. its underlying graph is a disjoint
union of ADE Dynkin diagrams).
\end{theorem}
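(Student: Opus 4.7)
The plan is to build $H$ from the Auslander--Reiten structure of $\cd_Q$ and then verify full faithfulness by a dimension count. Since $kQ$ is hereditary of finite global dimension, $\cd_Q$ admits a Serre functor $\nu$ and hence Auslander--Reiten triangles; the AR translate is $\tau_{\cd} = \Sigma^{-1}\nu$. The slice of indecomposable projectives $\{P_i\}_{i\in Q_0}$ generates, under iteration of $\tau_{\cd}^{\pm 1}$, a connected component $\cc$ of the AR quiver of $\cd_Q$. By Riedtmann's structure theorem for stable translation quivers, $\cc \cong \Z Q'/G$ for some tree $Q'$ and an admissible group $G$; inspection of the slice $\{P_i\}$ shows that the underlying graph of $Q'$ coincides with that of $Q$ and that $G = 1$, which yields a canonical identification $\Z Q \cong \cc$ sending $(i,0)$ to $P_i$.

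First I would define $H$ on objects by $H(i,p) := \tau_{\cd}^{-p}P_i$, and on each arrow $(\alpha,p)$ or $\sigma(\alpha,p)$ by fixing an irreducible morphism in the corresponding position of $\cc$. The mesh relations $r_x = 0$ then hold because the AR triangle
\[
H(\tau x) \to \bigoplus_{\beta:y\to x} H(y) \to H(x) \to \Sigma H(\tau x)
\]
has vanishing composition of its first two maps, whose matrix components are (up to non-zero scalars that can be absorbed into the choices of the $H(\sigma(\beta))$'s) exactly our chosen $H(\sigma(\beta))$ and $H(\beta)$. This produces a well-defined $k$-linear functor $H: k(\Z Q) \to \ind(\cd_Q)$ with image $\cc$.

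For full faithfulness I would compare morphism-space dimensions. In the mesh category, $\dim k(\Z Q)((i,p),(j,q))$ is given by the additive-function recipe of section~6.5 of \cite{Gabriel80}. On the derived side, AR-knitting starting at the projectives, together with Serre duality, yields the same dimensions. Since compositions of irreducible morphisms span every $\Hom$-space in $\cc$ between distinct indecomposables, the images under $H$ of paths in $\Z Q$ already span $\Hom_{\cd_Q}(H(i,p), H(j,q))$; matching dimensions then force $H$ to be bijective on morphism spaces.

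Finally, for the equivalence criterion: if $Q$ is Dynkin, Gabriel's theorem guarantees finitely many indecomposable $kQ$-modules, all preprojective; knitting from the $P_i$'s exhausts them, and since in the hereditary case $\cd_Q$ is the disjoint union of shifts of $\mod kQ$, every indecomposable of $\cd_Q$ lies in $\cc$, so $H$ is essentially surjective. Conversely, if $Q$ is non-Dynkin, then $\mod kQ$ carries regular indecomposable modules (in tubes if $Q$ is tame, on components of type $\Z A_\infty$ if $Q$ is wild) which are not in the $\tau_{\cd}$-orbit of any $P_i$ and hence lie outside $\cc$. The main obstacle is the dimension computation in the previous paragraph: it requires carefully tracking how the preprojective and preinjective halves of $\cc$ glue across the shift boundary inside $\cd_Q$ via $\nu(P_i) = I_i$ and the compatibility $\tau_{\cd}\Sigma = \Sigma\tau_{\cd}$.
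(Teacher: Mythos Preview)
The paper does not prove this theorem at all: it is quoted as a known result, with the sentence ``The following theorem is Proposition~4.6 of \cite{Happel87} and Theorem~5.6 of \cite{Happel88}'' immediately preceding the statement. So there is no proof in the paper to compare your proposal against.

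That said, your outline is essentially Happel's own strategy: identify the connected component of the AR quiver of $\cd_Q$ containing the $P_i$, recognize it as $\Z Q$ via the Riedtmann structure theorem, choose irreducible maps compatibly so that the mesh relations hold, and then argue full faithfulness. Where your sketch is thinnest is exactly the step you flag at the end, but the real gap is slightly earlier: the assertion that ``compositions of irreducible morphisms span every $\Hom$-space in $\cc$ between distinct indecomposables'' is essentially the statement that the component $\cc$ is \emph{standard}, and that is the substance of the theorem rather than an input to it. In an arbitrary AR component this spanning property can fail; one needs a genuine argument, typically an induction on the position in $\Z Q$ using that each map $X\to Y$ with $X$ not a direct summand of $Y$ factors through the source map of the AR triangle starting at $X$, together with a termination argument (no cycles of irreducibles, directedness of the slice). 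Your dimension-matching paragraph is otherwise circular: the additive-function computation on the $\cd_Q$ side already presupposes the very structure you want to establish, so it cannot independently certify injectivity.

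The equivalence criterion is correctly argued in both directions. For the non-Dynkin direction, one small simplification: you do not need to appeal to the tame/wild trichotomy; it suffices to note that the preprojective component of $\mod kQ$ is never all of $\mod kQ$ when $Q$ is non-Dynkin (equivalently, there exist indecomposables that are neither preprojective nor preinjective), so some indecomposable of $\cd_Q$ lies outside the $\tau_\cd$-orbit of the $P_i$.
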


Let $\nu: \cd_Q \to \cd_Q$ be the autoequivalence given by the derived tensor product
with the $k$-dual of $kQ$ considered as a bimodule. 
We have an isomorphism, bifunctorial in $L,M \in\cd_Q$,
\begin{equation} \label{eq:Serre-duality}
D\Hom_{\cd_Q}(L,M) = \Hom_{\cd_Q}(M, \nu L) \ko
\end{equation}
where $D$ denotes the duality over $k$. This
means that $\nu$ is the  {\em Serre functor of $\cd_Q$}. 
As shown in \cite{Happel87}, via the embedding $H$,
the autoequivalence $\tau$ of the mesh category corresponds to
the {\em Auslander--Reiten translation $\tau_{\cd_Q}=\Sigma^{-1} \nu$}, 
which we will also denote by $\tau$. For Dynkin quivers,
the combinatorial descriptions of $\nu$ (equivalently: $\Sigma$) and of the image of 
$\mod kQ$ in $\cd_Q$ are given in section~6.5 of
\cite{Gabriel80}.

For later use, we record the following isomorphism, which
follows from Serre duality (\ref{eq:Serre-duality}):  For  $L, M\in\cd_Q$
and $p\in\Z$, we have
\begin{equation} \label{eq:Auslander-Reiten-formula}
D\Ext^p_{\cd_Q}(L, M) = \Hom_{\cd_Q}(M, \Sigma^{-(p-1)} \tau L) \ko
\end{equation}
where, as usual, we write $\Ext^p_{\cd_Q}(L,M)$ for $\Hom_{\cd_Q}(L, \Sigma^p M)$.

\subsection{Graded affine quiver varieties} \label{ss:graded-quiver-varieties}
Let $Q$ be a finite acyclic quiver as in section~\ref{ss:repetition-quivers-and-Happels-theorem} and let $k$ be the field of complex numbers. The {\em framed quiver $\tilde{Q}$} is obtained from $Q$ by adding, for each vertex $i$, a new vertex $i'$ and
a new arrow $i \to i'$. For example, if $Q$ is the quiver $1 \to 2$, the
framed quiver is
\[ 
\xymatrix{ 
2 \ar[r] & 2' \\
1 \ar[r] \ar[u] & 1' .
} 
\]
Let $\Z\tilde{Q}$ be the repetition quiver of $\tilde{Q}$. We refer to the
vertices $(i', p)$, $i\in Q_0$, $p\in \Z$, as the {\em frozen vertices} of $\Z \tilde{Q}$
and mark them by squares as in the examples in
Figure~\ref{fig:repetition-quivers-A2-D4} associated with
quivers whose underlying graphs are $A_2$ respectively $D_4$.
For a vertex $x=(i,p)$, we put $\sigma(x)=(i', p-1)$
and for a vertex $(i', p)$, we put $\sigma(i',p)=(i, p-1)$.

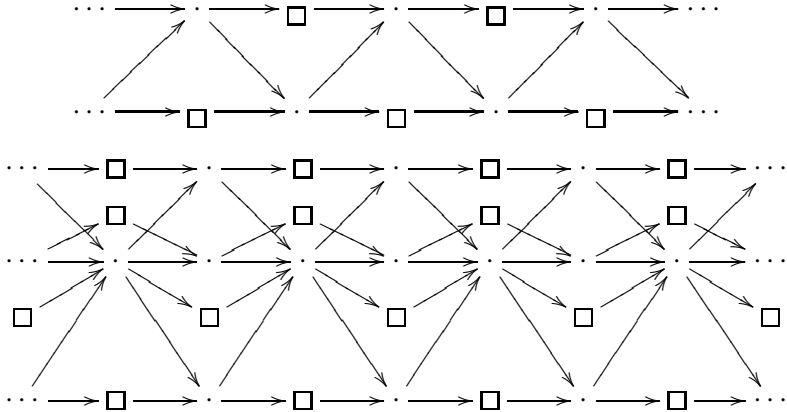
\begin{figure}
\[ 
\xymatrix{ \cdots  \ar[r]  & \cdot \ar[r] \ar[rd] & \boxed{ } \ar[r]  &  \cdot \ar[r]   \ar[rd] &  \boxed{ } \ar[r] & \cdot \ar[r]   \ar[rd] & \cdots \\
\cdots  \ar[r]  \ar[ru]  & \boxed{}  \ar[r]  &  \cdot  \ar[r]  \ar[ru]  &  \boxed{ } \ar[r]  & \cdot  \ar[r]  \ar[ru]  & \boxed{} \ar[r]  &  \cdots }
\]
\[
\begin{xy} 0;<0.7pt,0pt>:<0pt,-0.7pt>:: 
(50,80) *+{\boxed{}} ="1",
(50,0) *+{\cdots} ="2",
(50,50) *+{\cdots} ="3",
(50,125) *+{\cdots} ="4",
(100,0) *+{\boxed{}} ="5",
(100,25) *+{\boxed{}} ="6",
(100,125) *+{\boxed{}} ="7",
(100,50) *+{\cdot} ="8",
(150,80) *+{\boxed{}} ="9",
(150,0) *+{\cdot} ="10",
(150,50) *+{\cdot} ="11",
(150,125) *+{\cdot} ="12",
(200,0) *+{\boxed{}} ="13",
(200,25) *+{\boxed{}} ="14",
(200,125) *+{\boxed{}} ="15",
(200,50) *+{\cdot} ="16",
(250,80) *+{\boxed{}} ="17",
(250,0) *+{\cdot} ="18",
(250,50) *+{\cdot} ="19",
(250,125) *+{\cdot} ="20",
(300,0) *+{\boxed{}} ="21",
(300,25) *+{\boxed{}} ="22",
(300,125) *+{\boxed{}} ="23",
(300,50) *+{\cdot} ="24",
(350,80) *+{\boxed{}} ="25",
(350,0) *+{\cdot} ="26",
(350,50) *+{\cdot} ="27",
(350,125) *+{\cdot} ="28",
(400,0) *+{\boxed{}} ="29",
(400,25) *+{\boxed{}} ="30",
(400,125) *+{\boxed{}} ="31",
(400,50) *+{\cdot} ="32",
(450,80) *+{\boxed{}} ="33",
(450,0) *+{\cdots} ="34",
(450,50) *+{\cdots} ="35",
(450,125) *+{\cdots} ="36",
"1", {\ar"8"},
"2", {\ar"5"},
"2", {\ar"8"},
"3", {\ar"6"},
"3", {\ar"8"},
"4", {\ar"7"},
"4", {\ar"8"},
"5", {\ar"10"},
"6", {\ar"11"},
"7", {\ar"12"},
"8", {\ar"9"},
"8", {\ar"10"},
"8", {\ar"11"},
"8", {\ar"12"},
"9", {\ar"16"},
"10", {\ar"13"},
"10", {\ar"16"},
"11", {\ar"14"},
"11", {\ar"16"},
"12", {\ar"15"},
"12", {\ar"16"},
"13", {\ar"18"},
"14", {\ar"19"},
"15", {\ar"20"},
"16", {\ar"17"},
"16", {\ar"18"},
"16", {\ar"19"},
"16", {\ar"20"},
"17", {\ar"24"},
"18", {\ar"21"},
"18", {\ar"24"},
"19", {\ar"22"},
"19", {\ar"24"},
"20", {\ar"23"},
"20", {\ar"24"},
"21", {\ar"26"},
"22", {\ar"27"},
"23", {\ar"28"},
"24", {\ar"25"},
"24", {\ar"26"},
"24", {\ar"27"},
"24", {\ar"28"},
"25", {\ar"32"},
"26", {\ar"29"},
"26", {\ar"32"},
"27", {\ar"30"},
"27", {\ar"32"},
"28", {\ar"31"},
"28", {\ar"32"},
"29", {\ar"34"},
"30", {\ar"35"},
"31", {\ar"36"},
"32", {\ar"33"},
"32", {\ar"34"},
"32", {\ar"35"},
"32", {\ar"36"},
\end{xy}
\]
\caption{The quivers $\Z \tilde{Q}$ associated with $A_2$ and $D_4$}
\label{fig:repetition-quivers-A2-D4}
\end{figure}

The {\em regular (or smooth) Nakajima category $\cR$} is the mesh
category $k(\Z\tilde{Q})$, where we take into account the presence of
frozen vertices by {\em only imposing} the mesh relations
$r_x$ associated with {\em non frozen vertices $x$}. The {\em singular Nakajima category
$\cs$} is the full subcategory of $\cR$ whose objects are the frozen 
vertices. In the main body of this article, we will work more 
generally with the quotient $\cs_C$ of $\cs$ associated
with a configuration of vertices $C$, cf.~section~\ref{ss:res-simple-RC-modules}.
This generality will in particular ensure that our results do contain
those of \cite{LeclercPlamondon12} as special cases.
For simplicity, in this description of the main results, we restrict ourselves to the case
where $\cs_C=\cs$. We write $\cR_0$ and $\cs_0$ for the sets of objects of
the categories $\cR$ and $\cs$. An {\em $\cs$-module} is a $k$-linear functor
$
M: \cs^{op} \to \Mod k
$,
where $\Mod k$ is the category of $k$-vector spaces
(cf.~section~\ref{ss:notations-and-recollections})
Let $w: \cs_0 \to \N$ be a {\em dimension vector}, i.e. a function with
finite support. The {\em affine graded quiver variety $\cm_0(w)$} is
the variety of $\cs$-modules $M$ such that $M(u) = k^{w(u)}$ for each
vertex $u$ in $\cs_0$. Notice that such a module is given by
the images of the morphisms of $\cs$ and that these have
to satisfy all the relations that hold in $\cs$. This shows
that the set $\cm_0(w)$ canonically identifies
with a Zariski closed subset of the finite-dimensional affine space
\[
\prod_{u_1, u_2} \Hom_k(\Hom_\cs(u_1,u_2), k^{w(u_2)\times w(u_1)}) \ko
\]
where the product ranges over all objects $u_1$, $u_2$ of $\cs$.
Thus, the set $\cm_0(w)$ becomes indeed canonically an affine variety. 
By Theorem~2.4 of \cite{LeclercPlamondon12}, based on 
\cite{Lusztig98a} \cite{LebruynProcesi90}, this definition of $\cm_0(w)$ is equivalent
to Nakajima's original definition in \cite{Nakajima01} when
$Q$ is a Dynkin quiver. The proof of   \cite{LeclercPlamondon12}
also shows that when $Q$ is bipartite (each vertex is a source or
a sink), our definition of $\cm_0(w)$ agrees with Nakajima's in
\cite{Nakajima11} and when $Q$ is an arbitrary acyclic quiver
with Kimura--Qin's in \cite{KimuraQin12}.

Neither the original definition of $\cm_0(w)$ nor the above variant
are very explicit. However, we can make the above definition
more explicit by describing the category $\cs$ by its quiver $Q_\cs$
with an admissible set of relations,
cf.~\cite[Ch.~8]{GabrielRoiter92} \cite[II.3]{AssemSimsonSkowronski06}.
Since the objects of $\cs$ are pairwise non isomorphic, we can identify
the set of vertices of $Q_\cs$ with $\cs_0$ and then the number of
arrows from $\sigma(y)$ to $\sigma(x)$ in $Q_\cs$ equals
\[
\dim \Ext^1_{\cs}(S_{\sigma(x)}, S_{\sigma(y)}) \ko
\]
where $S_{\sigma(x)}$ is the simple module associated with $\sigma(x)$.
Moreover, the number of relations from $\sigma(y)$ to $\sigma(x)$
equals
\[
\dim \Ext^2_{\cs}(S_{\sigma(x)}, S_{\sigma(y)}).
\]

\begin{theorem}[Cor.~ \ref{cor:ext-between-simples-in-sc}]
\label{thm:Ext-computation}
For each integer $p\geq 1$ and all vertices $x$, $y$ of $\Z Q$, we have
a canonical isomorphism
\[
\Ext^p_{\cs}(S_{\sigma(x)}, S_{\sigma(y)}) \iso \Hom_{\cd_Q}(H(x), \Sigma^p H(y)) \ko
\]
where $H$ is Happel's embedding (Theorem~\ref{thm:Happel}). These
spaces vanish if no connected component of $Q$ is a Dynkin quiver and $p\geq 2$.
\end{theorem}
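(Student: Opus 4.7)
My plan is to construct a minimal projective resolution of the simple $\cs$-module $S_{\sigma(x)}$ whose $p$-th term has $P^\cs_{\sigma(y)}$ appearing with multiplicity $\dim\Hom_{\cd_Q}(H(x),\Sigma^p H(y))$, and then to read off the Ext groups by applying $\Hom_\cs(-,S_{\sigma(y)})$ and using that $\Hom_\cs(P^\cs_{\sigma(z)},S_{\sigma(y)})=k\cdot\delta_{z,y}$.

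The key tool is the mesh structure on $\Z\tilde Q$. For each non-frozen vertex $z$, the mesh relator at $z$ gives an exact sequence of $\cR$-modules
\[
0\to\cR(-,\tau z)\to\bigoplus_{\beta:w\to z}\cR(-,w)\to\cR(-,z)\to S^\cR_z\to 0,
\]
and the middle term contains a distinguished summand $\cR(-,\sigma z)$ coming from the $\sigma$-image of the framing arrow into $z$. Restriction to $\cs$-mod turns this summand into the honest frozen projective $P^\cs_{\sigma z}$, while the non-frozen summands $\cR(-,w)$ are to be resolved further by their own meshes. Splicing these mesh sequences along the $\tau^{-1}$-orbit of $x$ and restricting to $\cs$-mod should produce a complex whose frozen contributions assemble into the desired projective resolution. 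The multiplicities at each step are identified by combining Happel's Theorem~\ref{thm:Happel} (which gives $k(\Z Q)(u,v)\iso\Hom_{\cd_Q}(H(u),H(v))$) with the Auslander--Reiten formula~(\ref{eq:Auslander-Reiten-formula}); the $p$-fold iteration corresponds, via Happel's embedding, to computing $\Hom_{\cd_Q}(H(x),\Sigma^p H(y))$.

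The case $p=1$ should follow almost directly: the number of arrows from $\sigma(y)$ to $\sigma(x)$ in the Gabriel quiver $Q_\cs$ is the multiplicity of $P^\cs_{\sigma y}$ in the projective cover of $\rad P^\cs_{\sigma x}$, and the first step of the mesh resolution computes this as $\dim\Ext^1_{\cd_Q}(H(x),H(y))=\dim\Hom_{\cd_Q}(H(x),\Sigma H(y))$. The vanishing for $p\geq 2$ in the connected non-Dynkin case is transparent from the target side: each Happel image is either a preprojective module or $\Sigma^{-1}$ of a preinjective module, and combined with hereditariness of $kQ$ and the standard vanishing $\Hom_{\mod kQ}(\textrm{preinj},\textrm{preproj})=0$, a short case analysis on the shifts $n_x,n_y\in\{-1,0\}$ forces $\Hom_{\cd_Q}(H(x),\Sigma^p H(y))=0$ for $p\geq 2$.

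The main obstacle I anticipate is the rigorous verification that the spliced mesh complex, restricted to $\cs$-mod, is a genuine minimal projective resolution of $S_{\sigma(x)}$ and not merely a complex of $\cs$-modules. The mesh sequences naturally live in $\cR$-mod, and the intermediate non-frozen syzygies $\cR(-,w)$ are not a priori projective over $\cs$; one must carefully disentangle these contributions and show that successive mesh iteration absorbs them correctly into honest frozen projectives with the predicted multiplicities. This is presumably where the structural framework developed in section~\ref{ss:res-simple-RC-modules} and the reference~\cite{Iyama05b} to Iyama's higher Auslander-type results enter, providing the systematic tool for identifying the minimal projective resolution of $S_{\sigma(x)}$ in $\cs$-mod and reading off the claimed $\Ext$ groups.
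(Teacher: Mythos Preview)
Your overall strategy---build the minimal projective resolution of $S_{\sigma(x)}$ over $\cs$ and read off the multiplicity of $\sigma(y)^\wedge$ in each term---is exactly what the paper does (equivalently via the injective coresolution of $S_{\sigma(y)}$). But the construction of that resolution is where you and the paper diverge, and your version has a real gap. You propose to iterate the mesh sequences of $\Z\tilde Q$ along the $\tau^{-1}$-orbit and hope the frozen summands assemble correctly. Two problems: first, the iteration in the Dynkin case runs along the $\Sigma^{-1}$-orbit, not the $\tau^{-1}$-orbit (the $p$th term is $P_C(\Sigma^{-(p-1)}\tau x)$, and $\Sigma\neq\tau$); second, and more seriously, the mesh sequences you write down resolve the simples $S_z^{\cR}$ at non-frozen vertices, not the modules $\res(z^\wedge)$ that actually appear as syzygies, so it is unclear how splicing them would ever produce a projective $\cs$-resolution. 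The obstacle you flag is genuine, but it is not resolved by Iyama's ladder theory.

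The paper instead inserts an intermediate object: for each non-frozen vertex $z$ it first resolves the free $k(\Z Q)$-module $z^\wedge_\cd$ as an $\cR$-module, obtaining (Theorem~\ref{thm:rc-resolutions})
\[
0 \to (\Sigma^{-1}z)^\wedge \to P_C(z) \to z^\wedge \to z^\wedge_\cd \to 0
\]
in the Dynkin case and $0 \to P_C(z) \to z^\wedge \to z^\wedge_\cd \to 0$ otherwise, where $P_C(z)=\bigoplus_y \cd_Q(y,z)\otimes\sigma(y)^\wedge$. This is proved not by mesh combinatorics but by computing $\RHom_{\cR}(z^\wedge_\cd, S_u)$ for each simple $S_u$, using the Auslander--Reiten triangles of $\cd_Q$ to identify the degree-$2$ cohomology. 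Since $\res(z^\wedge_\cd)=0$, restriction to $\cs$ collapses this to a short exact sequence $0\to\res((\Sigma^{-1}z)^\wedge)\to P_C(z)\to\res(z^\wedge)\to 0$ with projective middle term, and splicing these along the $\Sigma^{-1}$-orbit (together with $0\to\res(\tau x)^\wedge\to\sigma(x)^\wedge\to S_{\sigma(x)}\to 0$) yields the resolution of Theorem~\ref{thm:res-simple-SC-modules}. The Ext computation and the non-Dynkin vanishing then fall out exactly as you say, via Serre duality.
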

Thanks to the theorem and to formula~(\ref{eq:Auslander-Reiten-formula}), 
we find that the
number of arrows, respectively minimal relations, from $\sigma(x)$ to 
$\sigma(y)$ equals
\[
\dim \Hom_{\cd_Q}(H(x), \tau H(y)) \mbox{ respectively } 
\dim \Hom_{\cd_Q}(H(x), \Sigma^{-1} \tau H(y)).
\]
It is not hard to see that this last dimension vanishes if no connected 
component of $Q$ is a Dynkin quiver. Thus, we obtain the following corollary.
\begin{corollary} If $Q$ is connected and not a Dynkin quiver, 
then for each dimension vector $w$,
the graded affine quiver variety $\cm_0(w)$ is isomorphic to an
affine space.
\end{corollary}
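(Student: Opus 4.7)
The plan is to combine Theorem~\ref{thm:Ext-computation} with the standard description of $\cs$ by its quiver $Q_{\cs}$ and admissible relations, to deduce that $\cs$ is relation-free and hence that $\cm_0(w)$ is a representation variety of a quiver without relations, i.e.\ an affine space.

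First, I invoke Theorem~\ref{thm:Ext-computation}: since $Q$ is connected and not Dynkin, $\Ext^p_{\cs}(S_{\sigma(x)}, S_{\sigma(y)})=0$ for all $p\geq 2$ and all vertices $x,y$ of $\Z Q$. By the general principle recalled just before the statement of that theorem, the number of minimal relations from $\sigma(y)$ to $\sigma(x)$ in an admissible presentation of $\cs$ equals $\dim\Ext^2_{\cs}(S_{\sigma(x)},S_{\sigma(y)})$; hence no such relations are needed and $\cs$ identifies with the path category of its quiver $Q_{\cs}$.

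Second, under this identification an $\cs$-module $M$ with $M(u)=k^{w(u)}$ for all $u$ is nothing but the free, independent choice of a linear map $k^{w(u_2)} \to k^{w(u_1)}$ for each arrow $u_1 \to u_2$ of $Q_{\cs}$. Factors where $w(u_1)$ or $w(u_2)$ vanishes contribute a single point, so only arrows between vertices in the (finite) support of $w$ matter. The text already observes that the number of such arrows from $\sigma(x)$ to $\sigma(y)$ equals $\dim \Hom_{\cd_Q}(H(x), \tau H(y))$, which is finite because all Hom spaces in $\cd_Q$ are finite-dimensional. Thus $\cm_0(w)$ is canonically a finite product of affine matrix spaces, i.e.\ an affine space.

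The only subtle point is the passage from ``$\Ext^2_{\cs}$ vanishes between every pair of simples'' to the literal statement ``$\cs$ equals the path category of $Q_{\cs}$'', given that $\cs$ has infinitely many objects. This works because $\cs$ is locally bounded, so the usual Ext-theoretic interpretation of an admissible presentation applies object by object; modulo this, the argument is pure bookkeeping.
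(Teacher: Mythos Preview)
Your approach is essentially the paper's: derive the corollary from Theorem~\ref{thm:Ext-computation} by concluding that $\cs$ has no minimal relations, hence is the path category of $Q_\cs$, hence $\cm_0(w)$ is a product of matrix spaces. The paper states exactly this in the paragraph preceding the corollary (and later, in section~\ref{ss:sing-of-SC}, records explicitly that $\cs_C$ is the path category of an infinite quiver in the non-Dynkin case).

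There is, however, a genuine inaccuracy in your justification of the ``subtle point''. You assert that $\cs$ is locally bounded, but for non-Dynkin $Q$ this is false. By restricting the sequences of Lemma~\ref{lemma:resolutions}~a) and Theorem~\ref{thm:rc-resolutions}~b) to $\cs$, the radical of $\sigma(x)^\wedge_{\cs}$ is $P_C(\tau x)=\bigoplus_{y}\cd_Q(H(y),\tau H(x))\otimes\sigma(y)^\wedge$, and for connected non-Dynkin $Q$ the set of $y$ with $\cd_Q(H(y),\tau H(x))\neq 0$ is infinite (equivalently, the representable $k(\Z Q)$-module $(\tau x)^\wedge$ has infinite support --- think of the growing preprojective dimension vectors for the Kronecker quiver). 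So $\cs$ has infinitely many objects mapping nontrivially to a given one.

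The correct hypothesis is not local boundedness but Assumption~\ref{as:directed-category}: $\cs$ is directed with finite-dimensional morphism spaces. As recalled in section~\ref{ss:notations-and-recollections}, this already guarantees minimal projective resolutions of right-bounded pointwise finite-dimensional modules with the usual $\Ext$-theoretic multiplicities. Hence $\Ext^2_\cs(S_{\sigma(x)},S_u)=0$ for all $u$ forces the minimal resolution of each simple to have length at most one, i.e.\ the radical of every $\sigma(x)^\wedge$ is projective; together with directedness this yields that $\cs$ is the path category of $Q_\cs$. With this correction your argument goes through unchanged.
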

Let us consider two examples of Dynkin quivers:
For the quiver $Q: 1\to 2$, we find that $Q_\cs$ is the quiver
\[
\begin{xy} 0;<0.5pt,0pt>:<0pt,-0.5pt>:: 
(-50,0) *+{\cdots} ="-1",
(-100,75) *+{\cdots}="-2",
(0,75) *+{\boxed{}} ="0",
(50,0) *+{\boxed{}} ="1",
(100,75) *+{\boxed{}} ="2",
(150,0) *+{\boxed{}} ="3",
(200,75) *+{\boxed{}} ="4",
(250,0) *+{\boxed{}} ="5",
(300,75) *+{\boxed{}} ="6",
(350,0) *+{\boxed{}} ="7",
(400,75) *+{\cdots} ="8",
(450,0) *+{\cdots} ="9",
"-1", {\ar "1"},
"-1", {\ar "2"},
"-2", {\ar "0"},
"-2", {\ar "1"},
"0", {\ar_a "2"},
"0", {\ar|-*+{{\scriptstyle b}} "3"},
"1", {\ar"3"},
"1", {\ar"4"},
"2", {\ar_a "4"},
"2", {\ar|-*+{{\scriptstyle b}} "5"},
"3", {\ar^a "5"},
"3", {\ar|-*+{{\scriptstyle c}} "6"},
"4", {\ar_a "6"},
"4", {\ar"7"},
"5", {\ar"7"},
"5", {\ar"8"},
"6", {\ar"8"},
"6", {\ar"9"},
"7", {\ar"9"},
\end{xy}
\]
and that $\cs$ is isomorphic to the path category of $Q_\cs$ modulo the ideal
generated by all relations of the form $ab-ba$ and $a^3-cb$ (we denote
all horizontal arrows by $a$, all rising arrows by $b$ and all 
descending arrows by $c$). This example is deceptively simple.
The great complexity of the category $\cs$ becomes
visible when we look at the quiver of $\cs$ in the case
of $D_4$. In the following drawing, we only depict
the arrows which start at the leftmost vertex on
each row.
\[
\begin{xy} 0;<0.4pt,0pt>:<0pt,-0.4pt>:: 
(0,75) *+{\boxed{.}} ="0",
(75,0) *+{\boxed{}} ="1",
(75,50) *+{\boxed{}} ="2",
(75,150) *+{\boxed{}} ="3",
(150,75) *+{\boxed{}} ="4",
(225,0) *+{\boxed{}} ="5",
(225,50) *+{\boxed{}} ="6",
(225,150) *+{\boxed{}} ="7",
(300,75) *+{\boxed{.}} ="8",
(375,0) *+{\boxed{}} ="9",
(375,50) *+{\boxed{}} ="10",
(375,150) *+{\boxed{}} ="11",
(450,75) *+{\boxed{}} ="12",
(525,0) *+{\boxed{}} ="13",
(525,50) *+{\boxed{}} ="14",
(525,150) *+{\boxed{}} ="15",
(600,75) *+{\boxed{}} ="16",
(675,0) *+{\cdots} ="17",
(675,50) *+{\cdots} ="18",
(675,150) *+{\cdots} ="19",
(750,75) *+{\cdots} ="20",
"0", {\ar"4"},
"0", {\ar@/^/"5"},
"0", {\ar"6"},
"0", {\ar"7"},
"0", {\ar@{=>}@/_/"8"},
"0", {\ar "9"},
"0", {\ar"10"},
"0", {\ar"11"},
"0", {\ar@/_0.35cm/"12"},
"1", {\ar"5"},
"1", {\ar@/^/"8"},
"1", {\ar"10"},
"1", {\ar"11"},
"1", {\ar"12"},
"1", {\ar@/^/"13"},
"2", {\ar"6"},
"2", {\ar"8"},
"2", {\ar"9"},
"2", {\ar@/_/"11"},
"2", {\ar"12"},
"2", {\ar@/^/"14"},
"3", {\ar"7"},
"3", {\ar"8"},
"3", {\ar"9"},
"3", {\ar@/_/"10"},
"3", {\ar"12"},
"3", {\ar@/_/"15"},
\end{xy}
\]
Thus, the complete quiver is obtained from the one
displayed by adding all translates of the indicated arrows.
Notice that there is a double arrow between the
two vertices marked by a dotted box. This implies
that the
variety of representations with dimension vector
$(d_1, d_2)$ of the Kronecker quiver 
$\xymatrix{ 1 \ar@<0.5ex>[r] \ar@<-0.5ex>[r] & 2}$
is isomorphic to the graded affine quiver variety
$\cm_0(w)$ of type $D_4$ with dimension vector $w$
such that 
$w(\sigma(x))=d_1$, $w(\sigma(\tau^{-2}(x))=d_2$
for $x=(0,1)$ and $w(y)=0$ for all other frozen vertices $y$.
The stratification of this variety given by the orbits of
the base change group $GL_{d_1} \times GL_{d_2}$ 
has infinitely many strata already for $(d_1, d_2)=(1,1)$.
On the other hand, the Nakajima stratification, which
we will recall in the next section, always has finitely
many strata.

\subsection{Stratification} \label{ss:stratification}
We keep the assumptions
of section~\ref{ss:graded-quiver-varieties}. In particular,
$Q$ is an acyclic quiver and $\cR$ and $\cs$ are the
associated regular and singular Nakajima categories.
Let $v: \cR_0 \setminus \cs_0 \to \N$ and
$w: \cs_0 \to \N$ be dimension vectors. Let
$\tilde{\cm}(v,w)$ be the set of $\cR$-modules $M$ such
that $M(x) = k^{v(x)}$, $M(\sigma(x))= k^{w(\sigma(x))}$
for all $x\in \Z Q_0$ and that $M$ is {\em stable}, i.e.~we
have $\Hom_\cR(S_x, M)=0$ for each simple module
$S_x$ associated with a non frozen vertex
$x \in \Z Q_0$. Equivalently, $M$ does not contain
any non zero submodule supported only on non
frozen vertices. Let $G_v$ be the product of
the groups $\GL(k^{v(x)})$, where $x$ runs through the
non frozen vertices. By base change in the spaces
$k^{v(x)}$, the group $G_v$ acts freely on the
set $\tilde{\cm}(v,w)$. The {\em graded quiver
variety $\cm(v,w)$} is the quotient 
$\tilde{\cm}(v,w)/G_v$. For this definition and the following facts, we
refer to Nakajima's work \cite{Nakajima01} \cite{Nakajima11} for the
case where $Q$ is Dynkin or bipartite and to Qin \cite{Qin12} \cite{Qin12a} 
and Kimura-Qin \cite{KimuraQin12} for the extension to the case of an 
arbitrary acyclic quiver $Q$. The set  $\cm(v,w)$ canonically becomes a smooth quasi-projective variety and the projection map
\[
\pi : \cm(v,w) \to \cm_0(w)
\]
taking an $\cR$-module $M$ to its restriction
$M|_\cs$ is a proper map. Moreover, when $v$ varies, 
the graded affine quiver variety $\cm_0(w)$ is
stratified by the images of the non empty ones among the open subsets
$\cm^{reg}(v,w)\subset \cm(v,w)$ formed by the classes of the
modules $M \in \cm(v,w)$ which, in addition, are {\em co-stable}, i.e.
we have $\Hom_{\cR}(M,S_x)=0$ for each non
frozen vertex $x$ (by Prop.~4.1.3.8 of \cite{Qin12}, this is equivalent
to Nakajima's original description). The morphism $\pi$ induces
an isomorphism of each $\cm^{reg}(v,w)$ onto its
image in $\cm_0(w)$.

Recall that a {\em $\delta$-functor} from an abelian to a
triangulated category is (roughly) an additive functor 
transforming short exact sequences into triangles, cf. e.~g. \cite{Keller91}.
If no connected component of $Q$ is a Dynkin quiver,
let $\cv$ denote the additive subcategory of $\cd_Q$
whose indecomposable objects are the sums of
objects in the image of Happel's embedding. The
category $\cv$ becomes exact when endowed with
all the sequences giving rise to triangles in $\cd_Q$.

\begin{theorem}[sections~\ref{ss:construction-of-Phi}, \ref{ss:characterization-of-the-strata}]
\label{thm:strata}
There is a canonical $\delta$-functor
\[
\Phi: \mod \cs \to \cd_Q
\]
taking the simple module $S_{\sigma(x)}$ associated
with $x\in \Z Q_0$ to $H(x)$
(cf. Theorem~\ref{thm:Happel}) and such that
two modules $M_1$, $M_2$ belonging to $\cm_0(w)$ lie
in the same stratum if and only if $\Phi(M_1)$ is
isomorphic to $\Phi(M_2)$ in the derived category $\cd_Q$.
Moreover, if no connected component of $Q$ is a Dynkin
quiver, then $\Phi$ arises from an exact functor $\mod \cs \to \cv$.
\end{theorem}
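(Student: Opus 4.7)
I would construct $\Phi$ in three stages: canonically extending an $\cs$-module to a stable $\cR$-module, realizing the stable $\cR$-module as an object of $\cd_Q$ via Happel's embedding, and then deducing the $\delta$-property and the stratification statement.

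\emph{Stage 1 (canonical stable extension).} I would show that the restriction functor $\mod \cR \to \mod \cs$ admits a fully faithful section $M \mapsto \tilde M$ landing in the full subcategory of stable $\cR$-modules. Uniqueness of $\tilde M$ is forced by stability: the condition $\Hom_\cR(S_x, \tilde M)=0$ at each non-frozen $x$ determines the value $\tilde M(x)$ from the values of $\tilde M$ at the frozen vertices linked to $x$ by the $\sigma$-arrows. Existence proceeds by propagating the data along the $\Z$-grading of $\Z\tilde Q$; the Ext-computation of Theorem~\ref{thm:Ext-computation} guarantees that the mesh relators $r_x$ at non-frozen vertices are automatically satisfied, so no obstruction arises. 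Functoriality follows because morphisms between the frozen components extend uniquely to morphisms of stable extensions.

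\emph{Stage 2 (realization in $\cd_Q$).} Given a stable $\cR$-module $\tilde M$, which is supported on finitely many levels since $M$ is finite-dimensional, I would assemble the graded object $\bigoplus_{(i,p)} H(i,p) \otimes_k \tilde M(i,p) \in \cd_Q$ equipped with the degree-one endomorphism induced by the $\sigma$-arrows via Happel's fully faithful functor $H: k(\Z Q) \to \ind(\cd_Q)$. Thanks to the mesh relations at non-frozen vertices, this endomorphism is a genuine differential in the twisted-complex sense, so its totalization (or equivalently, the associated iterated cone) defines an object $\Phi(M) \in \cd_Q$. Specializing to $M = S_{\sigma(x)}$, the stable extension has a single non-zero non-frozen component, placed at $x$, and one reads off $\Phi(S_{\sigma(x)}) = H(x)$ directly.

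\emph{Stage 3 ($\delta$-property and stratification).} Because the stable-extension functor of Stage 1 is exact, a short exact sequence $0 \to M' \to M \to M'' \to 0$ in $\mod \cs$ lifts to a short exact sequence of stable $\cR$-modules, and Stage 2 converts this into a distinguished triangle $\Phi(M') \to \Phi(M) \to \Phi(M'') \to \Sigma \Phi(M')$; this gives the $\delta$-property. When no component of $Q$ is Dynkin, Theorem~\ref{thm:Ext-computation} gives $\Ext^{\geq 2}_\cs$-vanishing between simples, hence (by dévissage) between all finite-dimensional $\cs$-modules, and the realization lands inside $\cv$ as an exact functor. For the stratification claim, I would show that a regular stable/co-stable $\cR$-module $\tilde M \in \cm^{\mathrm{reg}}(v,w)$ is reconstructed from $\Phi(M|_\cs)$ via the Auslander--Reiten formula $v(x) = \dim \Hom_{\cd_Q}(H(x), \tau \Phi(M))$ (using co-stability to pin down the non-frozen dimensions). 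Consequently two points of $\cm_0(w)$ lie in the same stratum precisely when their $\Phi$-images are isomorphic in $\cd_Q$.

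The hardest step, I expect, will be Stage 2: turning the combinatorial data of a stable $\cR$-module into a well-defined object of $\cd_Q$ whose formation is functorial and exact. The technical subtlety is that Happel's embedding is not an equivalence outside the Dynkin case, so one must carefully verify that the twisted-complex differential defined by the $\sigma$-arrows squares to zero at the level of $\cd_Q$-morphisms, and that the resulting object is independent of the ordering used in the totalization. The Ext-computation of Theorem~\ref{thm:Ext-computation} should provide the precise homological input needed to push this through, since it identifies the mesh relators at non-frozen vertices with vanishing conditions on compositions $H(x) \to \Sigma H(y) \to \Sigma^2 H(z)$ in $\cd_Q$.
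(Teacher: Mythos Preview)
Your proposal has genuine gaps at each stage that diverge substantially from how the paper proceeds.

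\textbf{Stage 1.} Stability alone does not determine a unique extension $\tilde M$: the condition $\Hom_\cR(S_x,\tilde M)=0$ only says that $\tilde M(x)\to\bigoplus_{y\to x}\tilde M(y)$ is injective, which constrains but does not pin down $\tilde M(x)$. Indeed, the whole family $\cm(v,w)$ for varying $v$ consists of stable modules restricting to the same $\cs$-module. What the paper actually uses is the \emph{intermediate Kan extension} $K_{LR}(M)$, defined as the image of the canonical map $K_L(M)\to K_R(M)$; this is the unique extension that is simultaneously stable and co-stable (Proposition~\ref{prop:description-strata}).

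\textbf{Stage 2.} The twisted-complex construction you sketch is not how $\Phi$ is built, and it is unclear it can be made to work: $\cd_Q$ is a triangulated category, not a pretriangulated dg-category, so totalizing an object $\bigoplus H(i,p)\otimes\tilde M(i,p)$ with a ``differential'' coming from the arrows of $\Z Q$ requires more structure than Happel's embedding provides; moreover the arrows of $\Z Q$ do not correspond to degree-one maps for any grading by powers of $\Sigma$. The paper instead proves a representability theorem (Theorem~\ref{thm:representability}): the cokernel $CK(M)=\cok(K_{LR}(M)\to K_R(M))$ is a finitely cogenerated injective $k(\Z Q)$-module, hence of the form $D\Hom_{\cd_Q}(\Phi(M),H(?))$ for a unique $\Phi(M)\in\cd_Q$. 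That $\Phi(S_{\sigma(x)})\cong H(x)$ then falls out of Lemma~\ref{lemma:decomp-PhiM}.

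\textbf{Stage 3.} The assertion that the stable-extension functor is exact is false: $K_{LR}$ is \emph{not} exact in general (this failure is exploited in the proof of Theorem~\ref{thm:degeneration-order}). The paper establishes the $\delta$-property by two entirely different routes. In the non-Dynkin case it shows that $K_L$ and $K_R$ are exact (Lemma~\ref{lemma:KL-KR-exact}) and applies the snake lemma to the map $K_L\to K_R$ to control $CK$. In the Dynkin case it develops Gorenstein homological algebra for $\cs$: it shows $\cs$ is weakly Gorenstein of dimension~$1$, proves $\ul{\gpr}(\cs)\simeq\cd_Q$, and realizes $\Phi$ as the composite $\mod\cs\xrightarrow{\Omega}\ul{\gpr}(\cs)\xrightarrow{\sim}\cd_Q$, which is manifestly a $\delta$-functor. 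Finally, your formula $v(x)=\dim\Hom_{\cd_Q}(H(x),\tau\Phi(M))$ is not correct for recovering $v$; the paper instead shows (Lemma~\ref{lemma:decomp-PhiM}) that the multiplicity of $H(x)$ in $\Phi(M)$ equals $(w\sigma-C_qv)(x)$ and uses injectivity of the quantum Cartan matrix $C_q$ to conclude the stratification statement.
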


The theorem is inspired by results obtained for Dynkin
quivers and particular choices of $w$ by Hernandez--Leclerc
\cite{HernandezLeclerc11} and by Leclerc--Plamondon
\cite{LeclercPlamondon12}. It suggests that the varieties
$\cm_0(w)$ should be related to the moduli stack
of objects of $\cd_Q$ introduced and studied
by To\"en--Vaqui\'e \cite{ToenVaquie07}. The following
theorem further underlines the geometric relevance
of the derived category.

\begin{theorem}[section~\ref{ss:degeneration-order}] 
\label{thm:degeneration-order}
Under the bijection between strata of $\cm_0(w)$ and 
isomorphism classes in its image under $\Phi$, the degeneration 
order among strata corresponds to the degeneration order of 
Jensen--Su--Zimmermann \cite{JensenSuZimmermann05a} 
among isomorphism classes in the derived category $\cd_Q$.
\end{theorem}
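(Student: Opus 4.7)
The plan is to compare the two partial orders via Zwara's theorem on module degeneration and its translation under the $\delta$-functor $\Phi$. Recall Zwara's theorem: for two $\cs$-modules $M_1,M_2$ with the same dimension vector $w$, we have $M_2\in\overline{G_w\cdot M_1}$ iff there exists an $\cs$-module $Z$ and a short exact sequence $0\to Z\to M_1\oplus Z\to M_2\to 0$ in $\mod\cs$. In parallel, the Jensen--Su--Zimmermann order declares $X\leq_\Delta Y$ in $\cd_Q$ iff there exists a distinguished triangle $Z\to X\oplus Z\to Y\to\Sigma Z$. By Theorem~\ref{thm:strata}, the Nakajima stratum of $M\in\cm_0(w)$ is exactly the $\Phi$-fiber $\{N\in\cm_0(w)\mid\Phi(N)\cong\Phi(M)\}$; consequently the closure order on strata reads: the stratum of $M_2$ lies in the closure of the stratum of $M_1$ iff there is $N\in\cm_0(w)$ with $\Phi(N)\cong\Phi(M_1)$ and $M_2\in\overline{G_w\cdot N}$.

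For the forward direction, choose such an $N$ together with a Zwara sequence $0\to Z\to N\oplus Z\to M_2\to 0$ in $\mod\cs$. Applying the $\delta$-functor $\Phi$ yields a distinguished triangle $\Phi(Z)\to\Phi(N)\oplus\Phi(Z)\to\Phi(M_2)\to\Sigma\Phi(Z)$ in $\cd_Q$; since $\Phi(N)\cong\Phi(M_1)$, this witnesses $\Phi(M_1)\leq_\Delta\Phi(M_2)$.

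The converse direction is the main difficulty and requires lifting. Given a distinguished triangle $L\to\Phi(M_1)\oplus L\to\Phi(M_2)\to\Sigma L$ in $\cd_Q$, one must exhibit an $\cs$-module $\tilde L$ with $\Phi(\tilde L)\cong L$, together with a short exact sequence $0\to\tilde L\to M_1'\oplus\tilde L\to M_2'\to 0$ in $\mod\cs$ with $M_1'\cong M_1$ and $\Phi(M_2')\cong\Phi(M_2)$; Zwara's criterion then produces the required geometric degeneration inside $\cm_0(w)$. The main obstacle is the construction of $\tilde L$ and the lift of the triangle to a genuine conflation. The approach is to use the $\Ext$-computation of Theorem~\ref{thm:Ext-computation}, which identifies the connecting morphism $\Phi(M_2)\to\Sigma L$ with an $\Ext^1_{\cs}$-class on appropriately chosen lifts; the middle term of the corresponding extension then provides $\tilde L$, and a dimension-vector check via the Euler form on both sides ensures that the resulting sequence stays inside the prescribed $\cm_0(w)$.

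The non-Dynkin case is especially clean, since there $\Phi$ factors through the exact category $\cv$ of sums of objects in the image of Happel's embedding, in which distinguished triangles arise from conflations and therefore lift automatically to $\mod\cs$. In the Dynkin case one instead appeals to the (later) description of $\cs$ as a weakly Gorenstein category whose singularity category is equivalent to $\cd_Q$, which provides the needed lifting of morphisms and extensions from $\cd_Q$ back to $\mod\cs$.
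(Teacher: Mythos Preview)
Your proposal has two genuine gaps.

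\textbf{First gap: stratum closure versus orbit closure.} You assert that the stratum of $M_2$ lies in the closure of the stratum of $M_1$ iff there is some $N$ with $\Phi(N)\cong\Phi(M_1)$ and $M_2\in\overline{G_w\cdot N}$. The implication $\Leftarrow$ is clear, but $\Rightarrow$ is not: the Nakajima strata are \emph{not} $G_w$-orbits, and in general a point in the closure of a constructible $G_w$-invariant set need not lie in any single orbit closure. The paper itself stresses (end of section~\ref{ss:graded-quiver-varieties}) that for $Q$ of type $D_4$ there are infinitely many $G_w$-orbits inside a single stratum, so the orbit and stratum closure orders are a priori quite different. Without an argument bridging them, Zwara's theorem does not apply.

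\textbf{Second gap: lifting to a Zwara sequence.} For the converse, you propose to lift a JSZ triangle to a short exact sequence $0\to\tilde L\to M_1'\oplus\tilde L\to M_2'\to 0$. But lifting an extension class via surjectivity of $\Ext^1_{\cs}\to\Ext^1_{\cd_Q}$ only produces $0\to\tilde L\to E\to M_2'\to 0$ with $\Phi(E)\cong\Phi(M_1)\oplus L$; there is no reason $E$ itself should split as $M_1'\oplus\tilde L$. This splitting is exactly what Zwara's criterion requires, and Theorem~\ref{thm:Ext-computation} (which concerns only simples) does not supply it.

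\textbf{How the paper proceeds instead.} The paper avoids both problems by never invoking Zwara. It uses the purely numerical characterization of the closure order on strata due to Qin: the stratum of $M'$ lies in the closure of the stratum of $M$ iff $v'\leq v$ componentwise, where $(v,w)=\dimv K_{LR}(M)$ and $(v',w)=\dimv K_{LR}(M')$. For the direction JSZ $\Rightarrow$ closure, one lifts the triangle $\Phi(M')\to\Phi(M)\oplus Z\to Z\to\Sigma\Phi(M')$ to a short exact sequence $0\to M'\to E\to U\to 0$ in $\mod\cs_C$ (after first arranging, in the non-Dynkin case, that $Z$ lies in $\cv$ via a homology/torsion-free reduction) and then compares $\dimv K_{LR}$ on both sides; the point is that $K_{LR}$ is only ``half-exact'', which yields precisely the inequality $\dimv K_{LR}(M')\leq\dimv K_{LR}(M)$. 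For the direction closure $\Rightarrow$ JSZ, one writes the class $[\Phi(M')]-[\Phi(M)]$ in the split Grothendieck group $K_0^{\mathrm{split}}(\cd_Q)$ as a non-negative combination of Auslander--Reiten relations and builds the witnessing triangle directly from a single AR-triangle plus a split summand. Neither step needs the middle term of any lifted sequence to split.
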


Note that for Dynkin quivers $Q$, the degeneration order
on strata of LP-varieties coincides with the degeneration order on
orbits in the representation spaces of the repetitive algebra
and also with the $\Hom$-order on isomorphism classes of
representations of the repetitive algebra, cf. Remark~3.15
of \cite{LeclercPlamondon12}.

Now consider the projection $\pi$ as a morphism
\[
\coprod_v \cm(v,w) \to \cm_0(w).
\]
The following theorem is a consequence of
Nakajima's slice theorem (section~3.3 of \cite{Nakajima01}
and section~2.4 of \cite{KimuraQin12}):

\begin{theorem}[section~\ref{ss:description-of-the-fibres}]
\label{thm:description-of-the-fibres}
For each module $M\in \cm_0(w)$, 
the fiber $\pi^{-1}(\{M\})$ is homeomorphic to 
the Grassmannian of $\cd_Q$-submodules of the
right $\cd_Q$-module
\[
D\Hom_{\cd_Q}(\Phi(M),?) : \cd_Q^{op} \to \mod k.
\]
\end{theorem}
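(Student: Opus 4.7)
My plan is to deduce Theorem~\ref{thm:description-of-the-fibres} from Nakajima's slice theorem by translating its conclusion into the language of $\cd_Q$ through the functor $\Phi$. The general strategy follows the spirit of the results of Lusztig~\cite{Lusztig98a}, Savage--Tingley~\cite{SavageTingley11} and Shipman~\cite{Shipman10}, adapted to our singular Nakajima category $\cs$ and its image under $\Phi$.

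First I would apply Nakajima's slice theorem (Section~3.3 of \cite{Nakajima01}, extended to arbitrary acyclic quivers in Section~2.4 of \cite{KimuraQin12}): if $M$ belongs to the stratum $\cm^{reg}(v_0,w)$, then an \'etale neighborhood of $M$ in $\cm_0(w)$ splits as a product of $\cm^{reg}(v_0,w)$ with a transverse slice, itself of the form $\cm_0(\bar w)$ for a suitable modified frozen dimension vector $\bar w$ depending on the orbit of $M$. Moreover, the resolution $\pi$ respects this splitting, so that $\pi^{-1}(\{M\})$ is homeomorphic to the central fiber $\pi^{-1}(\{0\})\subset\cm(v-v_0,\bar w)$. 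This reduces the problem to describing this central fiber.

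Second I would identify the central fiber with the claimed Grassmannian by means of $\Phi$. A point of $\pi^{-1}(\{0\})$ is a $G_{v-v_0}$-orbit of stable $\cR$-modules $\tilde N$ whose restriction to $\cs$ is zero outside the support of $\bar w$ and has prescribed dimensions there. Applying $\Phi$ to the canonical short exact sequence relating $\tilde N$ to its simple composition factors produces, via the $\delta$-functor property from Theorem~\ref{thm:strata}, a distinguished triangle in $\cd_Q$ in which $\tilde N$ appears as the data of a morphism into $\Phi(M)$. Passing to $\Hom_{\cd_Q}(-,\nu\Phi(M))$ and invoking Serre duality~(\ref{eq:Serre-duality}) to rewrite this as $D\Hom_{\cd_Q}(\Phi(M),-)$, each such $\tilde N$ determines a subfunctor of $D\Hom_{\cd_Q}(\Phi(M),-)$ whose dimension vector is read off from $v-v_0$ and the composition series of $\tilde N$. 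I would then verify that this assignment is a bijection, using the characterization of strata in Theorem~\ref{thm:strata} to recognize that two stable extensions give the same subfunctor precisely when they are conjugate under $G_{v-v_0}$, and finally check that the bijection is a homeomorphism by comparing tangent spaces and using the algebraicity of all maps involved.

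The main obstacle I anticipate lies in this second step: establishing the correct categorical matching between stable $\cR$-extensions $\tilde N$ of $M$ and subfunctors of $D\Hom_{\cd_Q}(\Phi(M),-)$. This requires a fine analysis of how $\Phi$ behaves on stable and co-stable $\cR$-modules, together with a precise comparison between the combinatorial sub-representation data packaged by $\tilde N$ (the Grassmannian description arising directly from the slice theorem as in Shipman's theorem) and the intrinsic Grassmannian of $\cd_Q$-submodules. Once this dictionary is set up, functoriality and dimension bookkeeping complete the identification.
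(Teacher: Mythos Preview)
Your first step---reducing via Nakajima's slice theorem to the central fiber $\pi^{-1}(\{0\})$ in a smaller variety $\cm(v-v_0,\bar w)$---is correct and matches the paper exactly.

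The second step, however, is where you diverge from the paper and where your proposal becomes vague. You try to build the dictionary between stable $\cR$-modules $\tilde N$ and subfunctors of $D\Hom_{\cd_Q}(\Phi(M),-)$ by pushing $\tilde N$ through $\Phi$, forming triangles, and invoking Serre duality. The paper avoids this entirely, and does two simpler things instead. First, the set-theoretic bijection is established directly through the Kan extension machinery, without ever mentioning $\Phi$ or triangles: any stable $L$ with $\res(L)=M$ embeds canonically into $K_R(M)$ (stability makes the adjunction morphism $L\to K_R(\res L)$ injective), and since the canonical map $K_L(M)\to K_R(M)$ factors through $L$, one gets $K_{LR}(M)\subset L\subset K_R(M)$; thus $L/K_{LR}(M)$ is a submodule of $CK(M)=K_R(M)/K_{LR}(M)$ of dimension $v-v_0$, and conversely any such submodule pulls back to a stable $L$. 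Second, for the homeomorphism the paper simply observes (via Lemma~\ref{lemma:decomp-PhiM}) that $CK(M)\cong CK(S)$ where $S$ is the semi-simple $\cs$-module of dimension $w_0=w-(C_q v_0)\sigma^{-1}$, so after the slice-theorem reduction one is in the semi-simple case, and there the result is precisely Theorem~5.4 of Savage--Tingley \cite{SavageTingley11}.

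So the ``fine analysis of how $\Phi$ behaves on stable and co-stable $\cR$-modules'' that you flag as the main obstacle is not needed: the Kan extensions $K_{LR}\subset K_R$ already encode the Grassmannian structure directly, and the homeomorphism is outsourced to Savage--Tingley rather than proved from scratch. Your route through $\Phi$ and triangles might be made to work, but note that $\Phi$ is defined on $\cs$-modules, not $\cR$-modules, so applying it to $\tilde N$ does not even type-check as written.
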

Notice that each fiber contains a distinguished point: the
zero submodule. It corresponds to the pre-image of
$M$ under the isomorphism induced by $\pi$ from a
suitable $\cm^{reg}(v,w)$ onto the unique stratum containing $M$.

\subsection{Description of $\Phi$ via Kan extensions} 
\label{ss:descr-Phi-Kan-extensions}
Recall that a $k$-category is a category whose morphism spaces carry
$k$-vector space structures such that the composition is
bilinear. For a $k$-category $\cc$,
let $\Mod(\cc)$ denote the category of all {\em right $\cc$-modules},
i.e.~all $k$-linear functors $M: \cc^{op} \to \Mod k$, 
cf.~section~\ref{ss:notations-and-recollections}.

The inclusion $\cs \to \cR$ yields the restriction
functor $\res: \Mod(\cR) \to \Mod(\cs)$.  This functor admits a left adjoint $K_L$ and a
right adjoint $K_R$: the {\em left} and the {\em right Kan extension},
cf.~\cite{MacLane98}:
\[
\xymatrix{
\Mod(\cR) \ar[d]|-*+{{\scriptstyle \res}} \\
\Mod(\cs) \ar@<2ex>[u]^{K_L} \ar@<-2ex>[u]_{K_R}
}
\]
As we will see, they have simple and concrete descriptions. 
Both Kan extensions are fully faithful (and so $\res$ is 
a localization of abelian categories in the sense of \cite{Gabriel62}). 
They are linked by a canonical morphism
\begin{equation} \label{eq:KLtoKR}
\can: K_L \to K_R.
\end{equation}
By definition, the {\em intermediate Kan extension $K_{LR}$} is
its image, so that we have canonical morphisms
\begin{equation} \label{eq:KLtoKLRtoKR}
\xymatrix{K_L \ar@{->>}[r] &  K_{LR}\; \ar@{>->}[r] &  K_R.}
\end{equation}
The functor $K_{LR}$ restricted to certain subcategories 
plays an important role in \cite{CerulliFeiginReineke12}.
For special vectors $w$, the following proposition 
follows from section~3.3 of \cite{LeclercPlamondon12}. 

\begin{proposition}[section~\ref{ss:description-of-the-strata}]
 \label{prop:description-strata}
Let $w: \cs_0 \to \N$ be a dimension vector and
let $M\in \cm_0(w)$. Then the module $K_{LR}(M)$ is both stable and
co-stable and thus yields a point $\tilde{M}$ in $\cm^{reg}(v,w)$ for a suitable
$v$. The unique stratum containing $M$ is $\pi(\cm^{reg}(v,w))$ and $\tilde{M}$ is
the unique pre-image of $M$ under $\pi: \cm^{reg}(v,w) \to \cm_0(w)$.
\end{proposition}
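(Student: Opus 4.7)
The plan is to exploit the factorization $K_L(M) \twoheadrightarrow K_{LR}(M) \hookrightarrow K_R(M)$ together with the two adjunctions $K_L \dashv \res \dashv K_R$. Since $\cs \subset \cR$ is the inclusion of a full subcategory, both Kan extensions are fully faithful, so the counit $\res K_L \to \id$ and the unit $\id \to \res K_R$ are natural isomorphisms. The canonical morphism $\can: K_L \to K_R$ is, by construction, the one whose image under these two identifications is $\id_M$. Since the restriction functor $\res$ is exact (kernels and cokernels in $\Mod(\cR)$ are computed pointwise), it commutes with images, so
\[
\res K_{LR}(M) = \im(\res \can_M) = \im(\id_M) = M \ko
\]
which is the first thing to record. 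If needed, I would also double-check this at the level of the pointwise formulas: at a frozen vertex $u \in \cs_0$, the comma category computing $K_L(M)(u)$ or $K_R(M)(u)$ contains $\id_u$ as initial/terminal object, so both reduce to $M(u)$ and $\can$ reduces to $\id_{M(u)}$.

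Next I would derive stability and co-stability of $K_{LR}(M)$ purely from the adjunctions. For a non-frozen vertex $x \in \Z Q_0 \setminus \cs_0$, the simple $\cR$-module $S_x$ is supported outside $\cs_0$, hence $\res S_x = 0$. Thus
\[
\Hom_\cR(S_x, K_R(M)) \;=\; \Hom_\cs(\res S_x, M) \;=\; 0 \ko
\]
so $K_R(M)$, and therefore its submodule $K_{LR}(M)$, admits no non-zero submodule supported on non-frozen vertices: $K_{LR}(M)$ is stable. Dually,
\[
\Hom_\cR(K_L(M), S_x) \;=\; \Hom_\cs(M, \res S_x) \;=\; 0 \ko
\]
so $K_L(M)$ is co-stable and its quotient $K_{LR}(M)$ inherits the property.

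With these two points in place, the remainder is bookkeeping. Define $v: \Z Q_0 \setminus \cs_0 \to \N$ by $v(x) := \dim_k K_{LR}(M)(x)$; fixing bases presents $K_{LR}(M)$ as an element $\tilde M$ of $\tilde{\cm}(v,w)$ which, by the stability/co-stability established above, projects to an element of $\cm^{reg}(v,w)$. By the first step, $\pi(\tilde M) = K_{LR}(M)|_\cs = M$, so $M \in \pi(\cm^{reg}(v,w))$. Since the sets $\pi(\cm^{reg}(v',w))$ form a partition of $\cm_0(w)$ as $v'$ varies (section~\ref{ss:stratification}), this is the unique stratum containing $M$; and since $\pi$ induces an isomorphism of $\cm^{reg}(v,w)$ onto its image, $\tilde M$ is the unique pre-image of $M$ in $\cm^{reg}(v,w)$.

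The main obstacle I expect is the very first step: confirming that $\can$ really restricts to the identity on $\cs$, and that consequently $K_{LR}$ is a genuine section over the strata. Once that is pinned down (either via the triangle identities, or via the pointwise colimit/limit description), everything else reduces to the two one-line adjunction computations above; and the statement acquires the pleasant geometric meaning that $K_{LR}$ is the functorial way to produce, from any closed point $M$ of $\cm_0(w)$, the canonical lift $\tilde M$ sitting in the unique regular stratum above it.
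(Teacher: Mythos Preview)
Your argument is essentially the paper's own: the same use of the adjunctions to get stability of $K_R(M)$ and co-stability of $K_L(M)$, hence of $K_{LR}(M)$ as a sub/quotient; the same use of exactness of $\res$ together with the invertibility of $\res(\can)$ to conclude $\res K_{LR}(M)\cong M$; and the same appeal to the stratification facts recalled in section~\ref{ss:stratification}.

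There is one small but genuine omission. When you set $v(x):=\dim_k K_{LR}(M)(x)$ and declare $\tilde M\in\tilde\cm(v,w)$, you are implicitly assuming that $K_{LR}(M)$ is finite-dimensional, i.e.\ that $v$ has finite support. This is not automatic and the paper does verify it: since $M$ is finite-dimensional, $K_L(M)$ is finitely generated hence right bounded and pointwise finite-dimensional, while $K_R(M)$ is left bounded and pointwise finite-dimensional; as $K_{LR}(M)$ is simultaneously a quotient of $K_L(M)$ and a submodule of $K_R(M)$, it is bounded on both sides and pointwise finite-dimensional, hence finite-dimensional. You should insert this sentence before defining $v$; otherwise the passage to $\cm^{reg}(v,w)$ is not justified.
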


It is not hard to check that $K_{LR}$ is in fact an equivalence from
$\Mod(\cs)$ onto the full subcategory of $\Mod(\cR)$ whose
objects are the modules which are both stable and co-stable.
The geometric meaning of the functor taking a stable $\cR$-module
$L$ to $K_{LR} (\res(L))$ is given by the following proposition,
which is essentially implicit in Nakajima's work \cite{Nakajima01}.

\begin{proposition}[section~\ref{ss:intermediate-extensions-closed-orbits}]
\label{prop:intermediate-extension-closed-orbit}
If $L$ is a stable $\cR$-module belonging to $\cm(v,w)$
and $K_{LR}(\res L)$ is of dimension vector $(v^0,w)$, then the unique
closed $G_v$-orbit in the closure of $G_v L$ in the affine variety 
$\rep(\cR^{op}, v,w)$ of representations of $\cR^{op}$ of dimension vector $(v,w)$ is
that of $K_{LR}(\res L) \oplus S$, where $S$ is the semi-simple
$k(\Z Q)$-module of dimension vector $v-v^0$.
\end{proposition}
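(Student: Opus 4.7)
The plan is to identify $K_{LR}(\res L) \oplus S$ as an explicit one-parameter degeneration of $L$ inside $\rep(\cR^{op}, v, w)$, and then to argue that its $G_v$-orbit realises the unique closed orbit in the fibre of the restriction map $\rep(\cR^{op}, v, w) \to \cm_0(w)$ over $\res L$; the uniqueness of a closed orbit within $\overline{G_v L}$ will then be automatic since $G_v$ is reductive.

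First, I would observe that since $L$ is stable, the unit morphism $L \to K_R(\res L)$ is injective: its kernel is supported on non-frozen vertices and hence vanishes by stability. The canonical morphism $\can_{\res L} : K_L(\res L) \to K_R(\res L)$ of (\ref{eq:KLtoKR}) factors as the composition of the counit $K_L(\res L) \to L$ with this injection, and by definition its image is $K_{LR}(\res L)$. Consequently, the image of the counit, viewed as a submodule of $L$, is isomorphic to $K_{LR}(\res L)$, and we obtain a short exact sequence
\[
0 \to K_{LR}(\res L) \to L \to T \to 0.
\]
Applying $\res$ and using that $K_{LR}$ is a section of $\res$ (since both $K_L$ and $K_R$ are), we find $\res T = 0$, so that $T$ is a $k(\Z Q)$-module of dimension vector $v - v^0$.

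Next, I would realise the degeneration by a Rees-type construction. A vector-space splitting of the exact sequence above determines a one-parameter subgroup of $G_v$ acting trivially on $K_{LR}(\res L)$ and by scaling on the chosen complement; its $t \to 0$ limit inside $\rep(\cR^{op}, v, w)$ is the split module $K_{LR}(\res L) \oplus T$. Iterating this procedure along a composition series of the $k(\Z Q)$-module $T$ further degenerates $T$ to its semisimplification $S$, showing that $K_{LR}(\res L) \oplus S$ lies in $\overline{G_v L}$.

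The most delicate remaining step, and the one I expect to be the main obstacle, is to verify that the $G_v$-orbit of $K_{LR}(\res L) \oplus S$ is actually closed. I would proceed by running the preceding analysis for an arbitrary module $N \in \rep(\cR^{op}, v, w)$ lying over $\res L$. After first splitting off the maximal non-frozen submodule of $N$ (which changes neither $\res N$ nor the ambient dimension vector at non-frozen vertices), we may assume $N$ is stable; then the first paragraph applies to $N$ in place of $L$ and, combined with the semisimplification argument, shows that $K_{LR}(\res L) \oplus S$ lies in $\overline{G_v N}$ as well — note that the semisimple $k(\Z Q)$-module of dimension vector $v - v^0$ is unique up to isomorphism, so the same $S$ appears. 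Thus $K_{LR}(\res L) \oplus S$ sits in every orbit closure over $\res L$, which forces its own orbit to be closed. Alternatively, closedness can be deduced from Nakajima's slice theorem (section~3.3 of \cite{Nakajima01}), which exhibits a transversal slice at the stratum containing $\res L$ in which the maximally degenerate lift to dimension vector $(v,w)$ is precisely $K_{LR}(\res L) \oplus S$.
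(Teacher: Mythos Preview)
Your argument is correct and closely parallels the paper's: both obtain the exact sequence $0 \to K_{LR}(\res L) \to L \to T \to 0$ with $\res T=0$ from the factorisation $\can_{\res L}=\eta_L\circ\eps_L$ together with the injectivity of $\eta_L$ on a stable module, and both use one-parameter degenerations along short exact sequences with kernel or cokernel in $\cn$ (the paper records this as Lemma~\ref{lemma:Gv-orbit-closure}).

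The closedness step is handled slightly differently. The paper takes the a~priori closed orbit $U\subset\overline{G_v L}$ supplied by GIT and degenerates $U$ itself to $K_{LR}(\res L)\oplus S$: rather than first passing to a stable module, it works with the unit $\eps\colon U\to K_R(\res U)$, observes that $K_{LR}(\res L)\hookrightarrow\im(\eps)$, and applies the degeneration lemma to the two sequences $0\to\ker\eps\to U\to\im\eps\to 0$ and $0\to K_{LR}(\res L)\to\im\eps\to\cok i\to 0$. Your route---showing that $K_{LR}(\res L)\oplus S$ lies in \emph{every} orbit closure over $\res L$, hence has closed orbit---is equivalent and perhaps conceptually cleaner. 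One small imprecision: after degenerating off the maximal non-frozen submodule $K\subset N$ you cannot literally ``assume $N$ is stable'' (the dimension vector of $N/K$ is smaller than $(v,w)$); rather, you apply the stable case to $N/K$ and then reassemble with $K_{ss}$, noting that $\dim K+\dim T'=v-v^0$ so the semisimple complement is again $S$.
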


By applying the proposition
to $L=K_{LR}(M)$ for an $\cs$-module $M$ (notice that $\res(K_{LR}(M))$
identifies with $M$)
we see in particular that the $G_v$-orbit of $K_{LR}(M)$ is closed
in the variety $\rep(\cR^{op}, v,w)$.

For each $\cs$-module $M$, the morphisms
$K_L(M) \to K_{LR}(M) \to K_R(M)$ become invertible
when restricted to $\cs$. 
Thus, the modules $CK(M)$ and $KK(M)$ defined by
\begin{align*}
KK(M) &=\ker(K_L(M) \to K_{LR}(M)) \mbox{ and } \\
CK(M) &=\cok(K_{LR}(M) \to K_R(M))
\end{align*}
vanish on $\cs$. Now we have an obvious isomorphism 
$\cR/\langle\cs \rangle \iso k(\Z Q)$, where $\langle\cs \rangle$ denotes
the ideal generated by the identical morphisms of $\cs$. Therefore,
we may view $CK(M)$ and $KK(M)$ as $k(\Z Q)$--modules. The
following proposition shows in particular that these modules are
injective respectively projective and that $KK$ and $CK$
determine $\Phi$.

\begin{proposition} [section~\ref{ss:description-of-Phi-via-Kan-extensions}]
\label{prop:description-KK-CK}
For $M\in\mod(\cs)$, we have functorial isomorphisms 
of $k(\Z Q)$-modules
\[
KK(M) = \Hom_{\cd_Q}(H(?), \tau\Phi(M)) 
\mbox{ and } 
CK(M) =  D\Hom_{\cd_Q}(\Phi(M),H(?))  \ko
\]
where $H$ is Happel's embedding (Theorem~\ref{thm:Happel}).
\end{proposition}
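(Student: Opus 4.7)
The plan is to proceed by dévissage from the case of simple $\cs$-modules. First, I would show that both sides of each proposed isomorphism fit into parallel six-term exact sequences associated with any short exact sequence $0 \to M' \to M \to M'' \to 0$ in $\mod\cs$. On the left-hand side, the right-exactness of $K_L$ and the left-exactness of $K_R$, combined with the snake lemma applied to the commutative diagram with rows $K_L(M') \to K_L(M) \to K_L(M'') \to 0$ and $0 \to K_R(M') \to K_R(M) \to K_R(M'')$ connected by $\can$, yield an exact sequence
\[
KK(M') \to KK(M) \to KK(M'') \to CK(M') \to CK(M) \to CK(M'').
\]
On the right-hand side, the $\delta$-functor property of $\Phi$ from Theorem~\ref{thm:strata} yields a triangle $\Phi(M') \to \Phi(M) \to \Phi(M'') \to \Sigma\Phi(M')$ in $\cd_Q$; applying $\Hom_{\cd_Q}(H(x),\tau(-))$ and using $\Sigma\tau = \nu$ together with the Serre duality isomorphism~(\ref{eq:Serre-duality}) produces the same six-term exact sequence. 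A five-lemma argument then reduces the proof to simple modules $M = S_{\sigma(y)}$, provided the two isomorphisms match up the respective connecting homomorphisms.

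For the base case, Theorem~\ref{thm:strata} gives $\Phi(S_{\sigma(y)}) = H(y)$. Using the coend formula for $K_L$ and the adjunction formula $K_R(M)(x) = \Hom_\cs(\cR(-,x)|_\cs,\, M)$ for $K_R$, at a non-frozen vertex $x$ one finds that $K_L(S_{\sigma(y)})(x)$ is the quotient of $\cR(x,\sigma(y))$ by the subspace of morphisms factoring through frozen vertices other than $\sigma(y)$, while $K_R(S_{\sigma(y)})(x)$ is the subspace of $\cR(\sigma(y),x)^*$ of linear forms vanishing on morphisms factoring through such vertices. The canonical map $\can_x$ is induced by the composition pairing $\cR(x,\sigma(y)) \times \cR(\sigma(y),x) \to \End_\cR(\sigma(y)) = k$. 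The frozen vertices of $\Z\tilde{Q}$ play the role of projective--injective objects in $\cR$ (analogous to the projective--injective modules over the repetitive algebra); under the appropriate extension of Happel's embedding (Theorem~\ref{thm:Happel}), these Hom spaces in $\cR$ together with the composition pairing correspond to a Serre-duality pairing between Hom spaces in $\cd_Q$. The kernel and cokernel of $\can_x$ then translate, via the Auslander--Reiten formula~(\ref{eq:Auslander-Reiten-formula}), into $\Hom_{\cd_Q}(H(x),\tau H(y))$ and $D\Hom_{\cd_Q}(H(y),H(x))$ respectively, which is exactly what is required.

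The hardest part will be the explicit identification in the base case of the Kan extensions of simple $\cs$-modules with the stated Hom spaces in $\cd_Q$. This requires a precise description of how morphism spaces between non-frozen and frozen vertices of $\cR$ decompose, and a corresponding interpretation under the extension of Happel's embedding to the frozen vertices. It may be cleanest to first treat the representable $\cs$-modules, where $K_L$ and $K_R$ agree with the representable $\cR$-modules, and then deduce the formulas for simples via a projective resolution. A secondary point is the compatibility of the snake-lemma connecting homomorphism on the Kan-extension side with the triangle-induced connecting homomorphism on the derived-category side; this compatibility should follow from the fact, indicated by the title of this subsection, that $\Phi$ itself is built out of the Kan extensions and the canonical morphism $\can$.
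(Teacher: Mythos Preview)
Your dévissage strategy is genuinely different from the paper's route. The paper does \emph{not} induct on the length of $M$; it treats an arbitrary finite-dimensional $M$ directly by choosing a surjection $P\to M$ with $P$ finitely generated projective. The key input is Lemma~\ref{lemma:KLP-iso-KRP}: for such $P$ the map $K_L(P)\to K_R(P)$ is an isomorphism, so $CK(M)\cong\cok(K_R(P)\to K_R(M))$. Evaluating at a non-frozen vertex $x$ and using the adjunction $K_R(N)(x)=\Hom(\res(x^\wedge),N)=\Hom(\Omega S_{\sigma^{-1}(x)},N)$ turns this cokernel into $\Ext^1(S_{\sigma^{-1}(x)},M)/\Ext^1(S_{\sigma^{-1}(x)},P)$, which by the weak Gorenstein property (Lemma~\ref{lemma:Gorenstein-dimension-1}) equals $\ul{\gpr}(\cs_C)(\Omega S_{\sigma^{-1}(x)},\Sigma\Omega M)$, and the equivalence $F'$ of Theorem~\ref{thm:equivalence-with-DQ} together with Serre duality identifies this with $D\cd_Q(\Phi M,H(x))$. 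The $KK$ statement is dual.

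There is a genuine circularity risk in your plan. You invoke the $\delta$-functor property of $\Phi$ from Theorem~\ref{thm:strata}, but in the Dynkin case that property is established \emph{through} the present proposition: in section~\ref{ss:construction-of-Phi} the functor $\Phi$ is defined so that the $CK$ isomorphism holds by fiat (making it only $k$-linear a priori), and only after one redefines $\Phi$ as $(F')^{-1}\circ\Omega$ in section~\ref{ss:description-of-Phi-via-Kan-extensions} and proves the $CK$ isomorphism for \emph{that} functor does one conclude the two agree and hence that the original $\Phi$ is a $\delta$-functor. If you adopt the $\Omega$-based definition from the outset, the $\delta$-functor property is free and your six-term sequence on the $\cd_Q$ side is available; but then the base case for simples is not ``Theorem~\ref{thm:strata} gives $\Phi(S_{\sigma(y)})=H(y)$''---you must compute $KK(S_{\sigma(y)})$ and $CK(S_{\sigma(y)})$ directly and match them with $\cd_Q$. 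That computation already needs the resolutions of Theorem~\ref{thm:rc-resolutions} and the comparison with $\cd_Q$, i.e.\ essentially the same ingredients as the paper's direct argument, so little is saved. Finally, the compatibility of the snake-lemma connecting map with the triangle boundary map, which you flag as secondary, is in fact the crux of the five-lemma step and does not fall out formally; it would require an explicit check.
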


\subsection{Gorenstein homological algebra}
\label{ss:Gorenstein-homological-algebra}
Let us assume that $Q$ is connected. The construction
of the stratifying functor $\Phi$ of Theorem~\ref{thm:strata}
is given in section~\ref{ss:construction-of-Phi}.
The proof of its exactness properties is quite different
depending on whether $Q$ is a Dynkin quiver
or not: If $Q$ is not Dynkin, we give a direct argument in
section~\ref{ss:stratifying-functor-non-Dynkin-case}. 
In the Dynkin case, we use Gorenstein homological algebra
(cf. section~\ref{s:stratifying-functor-Dynkin-case}): 
Let us assume that $Q$ is a Dynkin quiver and
$\cs$ the associated singular Nakajima category.
Recall that an $\cs$-module $M$ is {\em finitely presented} if
there is a projective presentation
\[
P_1 \to P_0 \to M \to 0
\]
with finitely generated projective modules $P_0$ and $P_1$.
An $\cs$-module $M$ is {\em right-bounded} if, for all $p\gg 0$,
the space $M(i,p)$ vanishes
for all $i\in Q_0$; it is  {\em pointwise finite-dimensional}
if all the spaces $M(u)$, $u\in \cs_0$, are finite-dimensional.

\begin{proposition}[sections~\ref{ss:Gorenstein-property}, \ref{ss:coherence}]
\label{prop:properties-singular-category}
\begin{itemize}
\item[a)]
The category $\cs$ is coherent, i.e.~its category of finitely presented modules is abelian.
\item[b)] The category $\cs$ is weakly Gorenstein of dimension $1$ in the sense that,
for all $p>1$, we have 
\[
\Ext^p_\cs(M,P)=0
\]
for each right-bounded pointwise finite-dimensional 
module $M$ and each finitely generated projective module $P$.
\end{itemize}
\end{proposition}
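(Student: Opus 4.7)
For part (a), I plan to apply the standard criterion that $\mod\cs$ is abelian iff the kernel of every morphism between finitely generated projective $\cs$-modules is finitely generated. The explicit description of $\cs$ by its quiver $Q_\cs$ and admissible ideal, made available by Theorem \ref{thm:Ext-computation}, exhibits $\cs$ as locally bounded in the Dynkin case: at each vertex of $Q_\cs$ there are only finitely many arrows and minimal relations, since the governing $\Ext$-spaces $\Ext^i_\cs(S_{\sigma(x)}, S_{\sigma(y)}) = \Hom_{\cd_Q}(H(x), \Sigma^i H(y))$ for $i\in\{1,2\}$ are finite-dimensional and non-zero for only finitely many $y$. Hence any morphism $\cs(-,u)\to\cs(-,v)$ has a kernel generated by the finitely many mesh-type relations through $u$ and $v$, and d\'evissage yields coherence in full generality.

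For part (b), the plan is to construct, for each indecomposable projective $P_v = \cs(-,v)$, an exact sequence
\[
0 \to P_v \to I^0 \to I^1 \to 0
\]
with $\Ext^p_\cs(M, I^j)=0$ for all $p\geq 1$, $j\in\{0,1\}$, and all right-bounded pointwise finite-dimensional $M$. The $I^j$ will be built by Nakayama-type dualisation, their structure read off from Theorem \ref{thm:Ext-computation} combined with the hereditary nature of $kQ$, which forces $\Hom_{\cd_Q}(H(x), \Sigma^p H(y))=0$ outside two consecutive values of $p$. Acyclicity of $\Hom_\cs(M,-)$ on the $I^j$ is established by a direct induction: right-boundedness of $M$ propagates the vanishing in the direction $p\to -\infty$, while pointwise finite-dimensionality together with the Dynkin structure of $\cd_Q$ closes the argument. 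Given the coresolution, $\Ext^p_\cs(M, P_v)$ is $H^p\Hom_\cs(M, I^\bullet)$, which vanishes for $p>1$.

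The main obstacle is the simultaneous construction and verification of the coresolution. The $I^j$ are in general not finitely generated, so their acyclicity against the test class must be argued globally: right-boundedness of $M$ is needed to control behaviour as $p\to -\infty$, and the Dynkin hypothesis is needed to force closure of the coresolution at length $1$ via the finite Coxeter-type periodicity of $\cd_Q$. Coherence from part (a) plays an auxiliary role, ensuring that $\Ext^p_\cs$ is computable via projective resolutions of the first variable for finite-dimensional approximants of $M$.
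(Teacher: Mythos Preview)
Your proposal for part~(a) contains a genuine error: the category $\cs$ is \emph{not} locally bounded when $C=\Z Q_0$. What Theorem~\ref{thm:Ext-computation} gives you is that the quiver $Q_\cs$ is locally finite (finitely many arrows and minimal relations at each vertex), but this is strictly weaker than local boundedness, which would require that each object have nonzero morphisms to and from only finitely many others. In the Dynkin case, long composites in $\cs$ need not vanish --- already for $Q=\vec{A}_2$, the horizontal paths $a^n$ in the description of $Q_\cs$ are nonzero for all $n$ --- so your d\'evissage never gets started. The paper proceeds entirely differently: it first shows that the full subcategory $\ct\subset\cR$ on the non-frozen vertices is isomorphic to the path category of $\Z Q$ and hence hereditary; it then proves $\cR$ coherent by analysing $\Hom(\ker(f),S_u)$ separately for frozen and non-frozen $u$, using the low injective dimension of the $S_{\sigma(x)}$ for the former and restricting to $\ct$ for the latter; finally it deduces coherence of $\cs$ by transporting kernels along $?\ten_\cs\cR$ and $\res_\cs$.

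For part~(b) your plan is in the right spirit but does not match the paper and, as stated, runs into an obstruction the paper leaves open. You propose a length-one coresolution $0\to P_v\to I^0\to I^1\to 0$ with the $I^j$ acyclic for the test class; if the $I^j$ were genuinely injective this would say the projectives have injective dimension~$\le 1$, which the paper explicitly records as an open question. The paper instead works on the other side: it uses the explicit periodic projective resolution of the simple $S_{\sigma^{-1}(x)}$ from Theorem~\ref{thm:res-simple-SC-modules}, built by splicing the three-term sequences of Theorem~\ref{thm:rc-resolutions}, and shows via Lemma~\ref{lemma:acyclic-three-term} that $\Hom(-,P)$ keeps each three-term piece acyclic. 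This gives $\Ext^p(S_{\sigma^{-1}(x)},P)=0$ for $p\ge 2$ directly; the extension from simples to the full test class is by an inverse-limit (respectively dually, colimit) argument using Mittag--Leffler on the finite-dimensional truncations. Your ``right-boundedness propagates vanishing as $p\to-\infty$'' gesture does not substitute for this: the issue is not a direction of propagation but the acyclicity of each individual three-term splice, which is exactly the content of Lemma~\ref{lemma:acyclic-three-term}.
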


An $\cs$-module $M$ is {\em Gorenstein-projective} if
$\Ext^p_\cs(M,P)=0$ for all $p>1$ and all finitely generated
projective $\cs$-modules $P$. Let $\gpr(\cs)$ be the
category of finitely presented Gorenstein-projective
$\cs$-modules. For each finitely generated $\cs$-module
$M$, let $\Omega M$ be the kernel of a surjective
morphism $P \to M$, where $P$ is finitely generated
projective. 

\begin{theorem}[sections~\ref{ss:two-Frobenius-categories} and
\ref{ss:link-to-the-derived-category}]
\label{thm:stable-category-of-Gorenstein-projectives} 
The category $\gpr(\cs)$ is a
Frobenius category. There is a canonical
equivalence from its stable category $\ul{\gpr}(\cs)$ to $\cd_Q$
sending $\Omega S_{\sigma(x)}$ to $H(x)$
(Theorem~\ref{thm:Happel}) for each $x\in\Z Q_0$.
\end{theorem}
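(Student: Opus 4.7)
The plan is to establish the theorem in two stages: first realize $\gpr(\cs)$ as a Frobenius exact category using Proposition~\ref{prop:properties-singular-category}, and then exploit the stratifying $\delta$-functor $\Phi$ of Theorem~\ref{thm:strata} to produce the triangle equivalence with $\cd_Q$ together with the normalization $\Omega S_{\sigma(x)} \mapsto H(x)$.

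For the Frobenius structure, I would start with a finitely presented module $N$ and a projective presentation $P_1 \to P_0 \to N \to 0$. Proposition~\ref{prop:properties-singular-category}(b) gives $\Ext^p_{\cs}(\Omega N, P) = \Ext^{p+1}_{\cs}(N, P) = 0$ for all $p \geq 1$ and finitely generated projective $P$, so every syzygy is strongly Gorenstein-projective and $\gpr(\cs)$ is closed under syzygies. The finitely generated projective $\cs$-modules are manifestly projective in $\gpr(\cs) \subset \mod \cs$, and the vanishing of $\Ext^1_\cs(\Omega N, P)$ just established shows that they are also injective in $\gpr(\cs)$, provided every object of $\gpr(\cs)$ is a syzygy. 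To secure this last point---equivalently, to embed each $M \in \gpr(\cs)$ into a finitely generated projective with Gorenstein-projective cokernel---I would use the duality $(-)^\vee = \Hom_\cs(-,\cs)$ together with the selfduality of $\cs$ up to the automorphism $\sigma$ that interchanges the two sides of the frozen vertices. This is a Dynkin-specific incarnation of the Auslander--Reiten structure of $k(\Z \tilde{Q})$ and is the analog of the self-injectivity of Happel's repetitive algebra; combined with closure under syzygies and extensions, it yields a Frobenius structure on $\gpr(\cs)$ whose projective-injectives are exactly the finitely generated projective $\cs$-modules.

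For the equivalence, I would first verify that $\Phi$ vanishes on every finitely generated projective $\cs$-module, which should follow from the Kan-extension description of Proposition~\ref{prop:description-KK-CK}: for a projective $P$ that already extends to an $\cR$-module, both $KK(P)$ and $CK(P)$ vanish and the defining triangle for $\Phi(P)$ collapses. Since $\Phi$ is a $\delta$-functor that kills projective-injectives, it descends to a triangle functor $\Psi := \Sigma \circ \Phi : \ul{\gpr}(\cs) \to \cd_Q$; applying $\Phi$ to the projective cover sequence $0 \to \Omega S_{\sigma(x)} \to P_x \to S_{\sigma(x)} \to 0$ and using $\Phi(S_{\sigma(x)}) = H(x)$ with $\Phi(P_x) = 0$ yields $\Psi(\Omega S_{\sigma(x)}) \simeq H(x)$. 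To conclude that $\Psi$ is an equivalence, I would invoke Happel's Theorem~\ref{thm:Happel}, which in the Dynkin case identifies the indecomposables of $\cd_Q$ with $H(\Z Q_0)$; dually, every non-projective indecomposable in $\gpr(\cs)$ is the syzygy of a simple, so the indecomposables of $\ul{\gpr}(\cs)$ are likewise indexed by $\Z Q_0$ via $x \mapsto \Omega S_{\sigma(x)}$. Full faithfulness then reduces via the standard long exact sequences to computing $\Ext$-groups between simples in $\mod \cs$, which match the $\Hom$-spaces in $\cd_Q$ between the $H(x)$'s by Theorem~\ref{thm:Ext-computation}. The main obstacle throughout is the injectivity of the finitely generated projectives in $\gpr(\cs)$---the Nakayama-type duality of $\cs$---which is the essential Dynkin input in the whole argument.
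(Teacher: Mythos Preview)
Your proposal has a circularity problem. You invoke Theorem~\ref{thm:strata} to obtain $\Phi$ as a $\delta$-functor and Proposition~\ref{prop:description-KK-CK} to show that $\Phi$ kills projectives, and then use these to build the triangle equivalence. But in the paper's logical architecture, the $\delta$-functor property of $\Phi$ in the Dynkin case and Proposition~\ref{prop:description-KK-CK} are \emph{consequences} of Theorem~\ref{thm:stable-category-of-Gorenstein-projectives}, not inputs to it: $\Phi$ is first constructed only as a $k$-linear functor (section~\ref{ss:construction-of-Phi}), and only after the equivalence $F':\cd_Q\iso\ul{\gpr}(\cs_C)$ is established is $\Phi$ \emph{redefined} as the composite of $\Omega$ with the inverse of $F'$, which is what makes it a $\delta$-functor. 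So you cannot cite the $\delta$-functor property here without begging the question.

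The paper's route is the reverse of yours. For the Frobenius structure it does not argue via a Nakayama-type duality of $\cs$; it simply quotes the general fact (Auslander--Reiten) that for any $k$-category $\cc$ the category $\gpr(\cc)$, defined via totally acyclic complexes, is extension-closed and Frobenius with projective-injectives the finitely generated projectives. The key lemma in section~\ref{ss:two-Frobenius-categories} then shows that $\Omega M\in\gpr(\cs_C)$ for each finite-dimensional $M$, using the explicit resolutions of Theorem~\ref{thm:rc-resolutions}. For the equivalence, the paper first computes $\ul{\gpr}(\cs_C)(\Omega S_{\sigma^{-1}(x)},\Sigma^p\Omega S_{\sigma^{-1}(y)})\cong\cd_Q(H(x),\Sigma^p H(y))$ directly from the Ext computations of Corollary~\ref{cor:ext-between-simples-in-sc} (so this part of your plan is sound and non-circular), obtaining a fully faithful triangle functor $F':\cd_Q\to\ul{\gpr}(\cs_C)$. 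Essential surjectivity, however, is not handled by the bare assertion that ``every non-projective indecomposable in $\gpr(\cs)$ is the syzygy of a simple''; the paper instead shows that $F'$ admits a right adjoint (by representability of $\Hom(F'(?),M)$, using finiteness of the socle) and then proves that the right orthogonal to the image of $F'$ is zero via an induction on the socle length. Your proposal gives no argument for this step, and it is where the real work lies.
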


Now the functor $\Phi$ is obtained as the composition
\[
\xymatrix{
\mod(\cs) \ar[r]^{\Omega} & \ul{\gpr}(\cs) \ar[r]^-{\sim} 
& \cd_Q \ko
}
\]
which shows in particular that it is a $\delta$-functor.

Let $\proj(\cR)$ denote the category of the finitely
generated projective $\cR$-modules. The following
theorem allows us to view the regular category $\cR$
as an Auslander category for the Gorenstein projective
$\cs$-modules.

\begin{theorem}[section~\ref{ss:regular-category-as-Auslander-category}]
\label{thm:regular-category-as-Auslander-category}
The restriction functor induces
equivalences $\proj(\cR) \to \gpr(\cs)$ and
$\inj(\cR) \to \gin(\cs)$. It yields
isomorphisms from the quiver $\Z \tilde{Q}$ onto
the Auslander--Reiten quivers of $\gpr(\cs)$ and $\gin(\cs)$
so that the frozen vertices correspond to the projective-injective
vertices.
\end{theorem}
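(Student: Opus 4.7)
The plan is to prove the equivalence $\res : \proj(\cR) \to \gpr(\cs)$ by establishing (a) well-definedness, (b) a bijection on indecomposables, and (c) full faithfulness. The Auslander--Reiten quiver claim follows from the identifications made along the way, and the $\inj(\cR) \to \gin(\cs)$ case follows from the $k$-duality $D = \Hom_k(-,k)$, which exchanges $\proj$ with $\inj$, $\gpr$ with $\gin$, and commutes with restriction.

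For step (a), write $P_y = \Hom_\cR(-,y)$ for $y \in \Z\tilde{Q}_0$. If $y$ is frozen, then $\res P_y = \Hom_\cs(-,y)$ is a finitely generated projective $\cs$-module, hence Gorenstein-projective. If $y$ is non-frozen, the mesh relator at $y$ is imposed in $\cR$ and yields an exact sequence
\[
0 \to P_{\tau y} \to \bigoplus_{\beta : z \to y} P_z \to P_y \to S_y \to 0
\]
in $\Mod(\cR)$; restriction to $\cs$ kills $S_y$ and produces a finite projective presentation of $\res P_y$ over $\cs$. Any path in $\Z\tilde{Q}$ from a frozen vertex $(i',p)$ to $y = (j,q)$ forces $p \leq q$, so $\res P_y$ is pointwise finite-dimensional and right-bounded. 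Proposition~\ref{prop:properties-singular-category}(b) then gives $\Ext^{>1}_\cs(\res P_y, P') = 0$ for every finitely generated projective $\cs$-module $P'$, so $\res P_y \in \gpr(\cs)$.

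For step (b), invoke Theorem~\ref{thm:stable-category-of-Gorenstein-projectives}: the indecomposables of $\gpr(\cs)$ consist of the representables $\Hom_\cs(-,u)$, $u \in \cs_0$, together with one non-projective indecomposable per $H(x)$ with $x \in \Z Q_0$. Frozen $y = u$ yields $\res P_u = \Hom_\cs(-,u)$. For non-frozen $y = x$, passing the short exact sequence of (a) to $\ul\gpr(\cs) \simeq \cd_Q$ produces a triangle matching, via $H$, the mesh triangle at $x$ in $\Z Q$. A well-founded induction on $x$ (using, e.g., a combined height function on $(p, h(i))$ where $h : Q_0 \to \N$ is a linear extension compatible with the arrows of $Q$) identifies $\res P_x$ with the indecomposable Gorenstein-projective mapping to $H(x)$, and in particular shows it is indecomposable.

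Step (c) is the main obstacle. By Yoneda in $\Mod(\cR)$, $\Hom_{\Mod(\cR)}(P_y, P_{y'}) = \Hom_\cR(y,y')$; I want to show the restriction map
\[
\Hom_\cR(y,y') \to \Hom_{\Mod(\cs)}(\res P_y, \res P_{y'})
\]
is bijective. The base case with both $y$ and $y'$ frozen is immediate from Yoneda in $\Mod(\cs)$. For the inductive step, apply $\Hom_{\Mod(\cs)}(-, \res P_{y'})$ to the presentation $\res P_{\tau y} \to \bigoplus \res P_z \to \res P_y \to 0$ and compare with the analogous sequence obtained by applying $\Hom_{\Mod(\cR)}(-, P_{y'})$ to the mesh sequence in $\Mod(\cR)$; a five-lemma argument using the inductive hypothesis at $\tau y$ and at each $z$ closes the induction in $y$, and a symmetric step in $y'$ handles the case $y'$ non-frozen. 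The arrows of $\Z\tilde{Q}$, which by construction span $\rad \cR / \rad^2 \cR$, now correspond under restriction to the irreducible morphisms in $\gpr(\cs)$, so the vertex bijection of (b) extends to an isomorphism of quivers from $\Z\tilde{Q}$ onto the AR quiver of $\gpr(\cs)$, with frozen vertices matching the projective-injective indecomposables. The delicate point throughout (c) is that $\res P_y$ fails to be representable when $y$ is non-frozen, so the identification of morphism spaces cannot be deduced from Yoneda directly and must instead be propagated through the mesh presentations constructed in (a).
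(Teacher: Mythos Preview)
Your overall architecture is reasonable, but step~(c) has a genuine gap: the five-lemma induction you propose is not well-founded. The mesh at a non-frozen vertex $y$ reduces the claim for $y$ to the claim for $\tau y$ and for the neighbours $z$, all of which lie weakly to the left of $y$. Iterating moves arbitrarily far to the left in $\Z\tilde{Q}$, and there is no minimal vertex at which to anchor the induction. You might hope that for $y$ far to the left of $y'$ both sides vanish, giving a base case; but in the Dynkin situation $\cs$ is \emph{not} locally bounded (already for $Q=\vec{A}_2$ the indecomposable projective $\cs$-modules have infinite support to the left), so there is no reason for $\Hom_\cs(\res P_y,\res P_{y'})$ to vanish when $y$ is far from~$y'$. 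Thus the induction never bottoms out. A secondary issue: in step~(a) the restricted mesh sequence does not give a projective presentation over $\cs$ (the non-frozen $z$'s contribute non-projective summands), and the vanishing of $\Ext^{>1}$ you obtain from Proposition~\ref{prop:properties-singular-category}(b) is only half of Gorenstein-projectivity in the totally-acyclic-complex sense used in section~\ref{ss:two-Frobenius-categories}; you still owe the existence of the coresolution part.

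The paper avoids the induction entirely by working through the Kan extensions. For the injective version (the projective one is dual), one must show that
\[
\Hom_{\cR}(u^\vee,v^\vee)\;\longrightarrow\;\Hom_{\cR}(u^\vee,K_R\res v^\vee)\;=\;\Hom_{\cs}(\res u^\vee,\res v^\vee)
\]
is bijective. The point is to compute the adjunction morphism $v^\vee\to K_R\res(v^\vee)$ directly: Lemma~\ref{lemma:kernel-cokernel-adjunction} identifies its kernel and cokernel with $v^\vee_\cd$ and $(\Sigma v)^\vee_\cd$, both of which are finite-length $k(\Z Q)$-modules. One then checks, using the duality isomorphism of Corollary~\ref{cor:RHom}(e), that $\Hom_{\cR}(u^\vee,S_z)$ and $\Ext^1_{\cR}(u^\vee,S_z)$ vanish for every non-frozen $z$, whence $\Hom(u^\vee,-)$ does not see the kernel or the cokernel and the map is an isomorphism. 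This single computation replaces your infinite descent and simultaneously handles all pairs $(u,v)$.
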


In particular, we obtain that $\proj(\cR)$ admits a
natural structure of standard (in the sense of section~2.3,
page~63 of \cite{Ringel84}) Frobenius category whose projectives
are the finite direct sums of indecomposable projectives
associated with the frozen vertices of $\Z \tilde{Q}$ and
where each mesh ending in a non frozen vertex yields an
Auslander--Reiten conflation. More generally,
in section~\ref{s:stratifying-functor-Dynkin-case},
we will prove the above results (except coherence) for the quotients
$\cR \to \cR_C$ and $\cs\to \cs_C$ associated 
with suitable configurations $C$, cf.~section~\ref{ss:res-simple-RC-modules}.
In the philosophy of \cite{IyamaKalckWemyssYang12}, the 
Frobenius category $\ce=\proj(\cR_C)$ `admits a resolution', namely
itself, and so one expects its category of projectives $\proj(\cs_C)$ to be
Gorenstein and the category itself to be equivalent to
the category Gorenstein-projective modules over $\proj(\cs_C)$.
Technically, the categories we consider do not quite fit into the
framework of [loc. cit.] but the philosophy of that
paper is compatible with our findings.

\section{Homological properties of the Nakajima categories}
\label{s:Homological-properties-of-the-Nakajima-categories}

\subsection{Notations and recollections} \label{ss:notations-and-recollections}
Let $k$ be a field and $\Mod k$ the category of $k$-vector spaces. 
Recall that a {\em $k$-category} is a category whose
morphism spaces are endowed with $k$-vector space structures such
that the composition is bilinear. Let $\cc$ be a $k$-category. We denote
by $\Mod(\cc)$ the category of {\em right $\cc$-modules}, i.e.~$k$-linear functors 
$\cc^{op} \to \Mod(k)$. For each object $x$ of $\cc$, we have the {\em free
module}
\[
x^{\wedge}=x^{\wedge}_\cc = \cc(?,x): \cc^{op} \to \Mod k
\]
and the {\em cofree module}
\[
x^{\vee}= x^\vee_\cc = D (\cc(x,?)): \cc^{op} \to \Mod k.
\]
Here, we write $\cc(u,v)$ for the space of morphisms $\Hom_\cc(u,v)$ and
$D$ for the duality over the ground field $k$. For each object $x$ of $\cc$ and
each $\cc$-module $M$, we have canonical isomorphisms
\begin{equation} \label{eq:Yoneda}
\Hom(x^\wedge,M) = M(x) \quad\mbox{and}\quad
\Hom(M,x^\vee) = D(M(x)).
\end{equation}
In particular, the module $x^\wedge$ is projective and $x^\vee$ is injective.
A module is {\em finitely generated} if it is a quotient of a finite direct sum
of modules $x^\wedge$; it is {\em finitely cogenerated} if it is a submodule of
a finite direct sum of modules $x^\vee$.
If $x$ is an object of $\cc$ whose
endo\-morphism algebra is local, then the free module $x^\wedge$ admits
a unique {\em simple quotient $S_x$}, which is also the unique simple
submodule of $x^\vee$.
By Kaplansky's theorem \cite{Kaplansky58}, if the endomorphism ring of
each object $x$ of $\cc$ is local, each projective module over $\cc$ is
a direct sum of free modules $x^\wedge$. 

Let us make the following assumptions on $\cc$.
\begin{assumption} \label{as:directed-category}
\begin{itemize}
\item[a)] The morphism spaces of $\cc$ are finite-dimensional and
\item[b)] the category $\cc$ is {\em directed}, i.e. the endomorphism algebra 
of each object is $k$ and $\cc$  is endowed with an order relation such 
that $\cc(x,y)\neq 0$ implies $x\leq y$.
\end{itemize} 
\end{assumption}
A $\cc$-module $M$ is {\em pointwise
finite-dimensional} if $M(x)$ is finite-dimensional for each object $x$ of $\cc$.
It is {\em right bounded} if there is a finite set of objects $E$ such that each
object $x$ with $M(x)\neq 0$ is less or equal to an object of $E$.
For example, the modules $x^\wedge$ are pointwise finite-dimensional
and right bounded. Let $M$ be a pointwise finite-dimensional, right bounded module.
Then  $M$ admits
a projective cover $P \to M$ by a (usually infinite)
coproduct $P$ of free modules $x^\wedge$
and the multiplicity of $x^\wedge$ in $P$ is
finite and equals the dimension of $\Hom(M, S_x)$. Moreover, the kernel
of $P\to M$ is again right bounded and pointwise finite-dimensional.
Thus, the module $M$ admits a minimal projective resolution
\[
\ldots \to P_1 \to P_0 \to M \to 0
\]
where each object $P_i$ is a coproduct of free modules $x^\wedge$ 
and the multiplicity of $x^\wedge$ in $P_i$ equals the dimension of
$\Ext^i(M,S_x)$.

\subsection{Resolutions for the simple $\cR_C$-modules} 
\label{ss:res-simple-RC-modules}
Let $Q$ be a connected acyclic quiver and $C$ a subset of the set 
of vertices of the repetition quiver $\Z Q$. Let $\cR_C$ be the
quotient of $\cR$ by the ideal generated by the identities of the
frozen vertices not belonging to $\sigma^{-1}(C)$ and let $\cs_C$ be the
full subcategory of $\cR_C$ formed by the vertices in $\sigma^{-1}(C)$.
We make the following assumption on $C$.
\begin{assumption} \label{main-assumption}
For each non frozen vertex $x$ of $\Z \tilde{Q}$, the sequences
\begin{equation} \label{eq:left-exact-sequences}
0 \to \cR_C(?,x) \to \bigoplus_{x \to y} \cR_C(?,y) \quad \mbox{and}\quad
0 \to \cR_C(x,?) \to \bigoplus_{y \to x} \cR_C(y,?) 
\end{equation}
are exact, where the sums range over all arrows of $\Z \tilde{Q}$ whose
source (respectively, target) is $x$.
\end{assumption}
The assumption holds, for example, if $C$ is the set of all vertices of $\Z Q$.
It also holds in the following situation: Assume that $\ce$ is a 
$\Hom$-finite exact Krull--Schmidt category which is standard 
(in the sense of section~2.3, page~63 of \cite{Ringel84}) and
whose Auslander--Reiten quiver is the full subquiver of $\Z \tilde{Q}$ formed
by the non frozen vertices and the vertices $\sigma^{-1}(c)$, $c\in C$, where the latter
correspond to the projective indecomposables of $\ce$. Then the
sequences (\ref{eq:left-exact-sequences}) are associated with
Auslander--Reiten conflations of $\ce$ and hence are exact.
For example, one can take $\ce$ to be the category of
finite-dimensional modules over the repetitive algebra of
an iterated tilted algebra $B$ of Dynkin type. The case where
$B$ itself is the path algebra of a Dynkin quiver was considered
by Leclerc--Plamondon \cite{LeclercPlamondon12}.


In fact, as we will see in 
Theorem~\ref{thm:regular-category-as-Auslander-category-for-config}, 
when the assumption holds, the given
set $C$ always comes from the choice of a $\Hom$-finite exact
Krull--Schmidt category which is moreover standard and whose stable 
Auslander--Reiten quiver is $\Z Q$. 

Another sufficient condition for the assumption to hold
is due to Iyama: According to 7.4 (2) of \cite{Iyama05b}, the
assumption holds if for each vertex $x$ of $\Z Q$, there is a
vertex $c$ in $C$ such that there is a non zero morphism
from $x$ to $c$ in the mesh-category $\cR_C$. Notice that
for this, it is sufficient that the following condition (R) holds:
\begin{itemize}
\item[(R)] for each vertex $x$ of $\Z Q$, there is a vertex
$c$ in $C$ such that the space of morphisms $k(\Z Q)(x,c)$
in the mesh category of $\Z Q$ does not vanish.
\end{itemize}
This is the first condition 
which Riedtmann imposed on the `combinatorial configurations' 
in her sense, cf.~Definition~2.3 of \cite{Riedtmann80a} 
and \cite{BretscherLaeserRiedtmann81}. 

Returning to the general setup we have the following lemma.

\begin{lemma} \label{lemma:resolutions}
For each (non frozen) vertex $x$ of $\Z Q$, we have (co-)resolutions
of simple $\cR_C$-modules
\begin{itemize}
\item[a)] $0 \to \tau(x)^\wedge \to \sigma(x)^\wedge \to S_{\sigma(x)} \to 0$,
\item[b)] $0 \to S_{\sigma(x)} \to \sigma(x)^\vee \to x^\vee \to 0$,
\item[c)] $0 \to \tau(x)^\wedge \to \bigoplus_{y\to x} y^\wedge \to x^\wedge \to S_x \to 0$,
\item[d)] $0 \to S_x \to x^\vee \to \bigoplus_{x\to y} y^\vee \to \tau^{-1}(x)^\vee \to 0$,
\end{itemize}
where in c) and d), the sum ranges over all arrows $y \to x$ with target $x$
in the quiver  $\Z \tilde{Q}$, respectively all arrows $x\to y$ with source $x$.
\end{lemma}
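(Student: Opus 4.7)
The four sequences split into two dual pairs (a)/(b) and (c)/(d); my plan is to prove (a) and (c) directly and then deduce (b) and (d) by $k$-linear duality, using the canonical identification $\Hom(y^\vee,x^\vee)=\cR_C(y,x)$. The combinatorial point behind (a) is that in $\Z\tilde{Q}$ the frozen vertex $\sigma(x)=(i',p-1)$ admits a \emph{unique} incoming arrow, namely $\alpha=((i\to i'),p-1):\tau(x)\to\sigma(x)$ coming from the framing edge $i\to i'$ of $\tilde{Q}$. Every path $z\to\sigma(x)$ of positive length must therefore end with $\alpha$, and so the image of the post-composition map $\tau(x)^\wedge\to\sigma(x)^\wedge$ coincides with $\rad(\sigma(x)^\wedge)=\ker(\sigma(x)^\wedge\twoheadrightarrow S_{\sigma(x)})$. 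Injectivity will follow from the stronger claim that, if $F(z,w)$ denotes the free $k$-space on paths $z\to w$ in $\Z\tilde{Q}$ and $J_w$ the relation ideal defining $\cR_C(z,w)$, then $J_{\sigma(x)}=J_{\tau(x)}\cdot\alpha$: every generator $u\,r_{x'}\,v$ of $J_{\sigma(x)}$ has suffix $v:x'\to\sigma(x)$ with $x'$ non-frozen, so $v$ ends in $\alpha$ and factors as $v=v'\alpha$; the forbidden-path generators of $\cR_C$ factor the same way. Since $\phi\mapsto\phi\alpha$ is visibly injective on formal paths, this equality of ideals passes to injectivity of the quotient map.

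For (c), the stated sequence is a two-step projective resolution of $S_x$. The projective cover $x^\wedge\twoheadrightarrow S_x$ is standard, and the map $\bigoplus_{\beta:y\to x}y^\wedge\to x^\wedge$, $(\phi_\beta)\mapsto\sum\phi_\beta\beta$, has image $\rad(x^\wedge)$ because every non-identity path to $x$ ends in some $\beta$. The map $\tau(x)^\wedge\to\bigoplus y^\wedge$, $\psi\mapsto(\psi\sigma(\beta))_\beta$, composes with the next map to $\psi\,r_x=0$ by the mesh relation at the non-frozen vertex $x$. Its injectivity is \emph{exactly} Assumption~\ref{main-assumption}(i) applied at $\tau(x)$, after identifying the arrows out of $\tau(x)$ with the arrows into $x$ via $\sigma(\beta)\leftrightarrow\beta$. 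The substantive step is exactness at $\bigoplus y^\wedge$: lifting $(\phi_\beta)$ with $\sum\phi_\beta\beta=0$ to formal representatives and decomposing the certifying relation in $F(z,x)$ by its last arrow, I observe that summands $u\,r_{x'}\,v$ with either $x'\neq x$ or $v\neq\id_x$, as well as the forbidden-path generators, each contribute to exactly one $\beta$-component by a term which is itself a relation in $F(z,y_\beta)$ and so vanishes in $\cR_C(z,y_\beta)$; the remaining summands $u\,r_x\cdot\id_x$ produce the coherent contribution $(u\sigma(\beta))_\beta$, exhibiting $(\phi_\beta)$ as the image of $\psi=[u]\in\tau(x)^\wedge(z)$.

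Part (b) dualizes (a): $\sigma(x)$ has a unique \emph{outgoing} arrow $\beta_0:\sigma(x)\to x$, the map $\sigma(x)^\vee\to x^\vee$ is the $k$-dual of pre-composition with $\beta_0$, and the same ideal-factorization argument shows that pre-composition with $\beta_0$ is injective, hence its dual is surjective with kernel the socle $S_{\sigma(x)}$. Part (d) is the dual of (c), invoking Assumption~\ref{main-assumption}(ii) at the non-frozen vertex $\tau^{-1}(x)$ together with the bijection between arrows into $\tau^{-1}(x)$ and arrows out of $x$ provided by $\sigma$. The main obstacle I foresee is the last-arrow decomposition in (c); once that step is correctly executed, parts (a), (b) and (d) amount to bookkeeping.
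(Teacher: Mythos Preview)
The paper does not supply a proof: it simply says ``The proof is an exercise.'' Your write-up is a correct and complete solution, and the approach you take---unwinding the definition of $\cR_C$ as a quotient of the path category of $\Z\tilde{Q}$ and tracking how the mesh relators decompose according to the last arrow of a path---is the natural way to carry out this exercise.

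A couple of clarifying remarks. For (a), your ideal-factorisation argument $J_{\sigma(x)}=J_{\tau(x)}\cdot\alpha$ is clean and, as you note, does \emph{not} invoke Assumption~\ref{main-assumption}; the point is precisely that no mesh relator is imposed at the frozen vertex $\sigma(x)$, so every relation with target $\sigma(x)$ factors through the unique incoming arrow. (Strictly speaking one should observe that (a) and (b) are only asserted for those $x$ with $\sigma(x)\in\sigma^{-1}(C)$, since otherwise $S_{\sigma(x)}=0$ is not a simple module; your argument covers exactly this case.) For (c), your last-arrow decomposition establishing middle exactness is the substantive step and is correct: it amounts to the tautology that the kernel of $\bigoplus_\beta F(?,y_\beta)\to F(?,x)$, taken modulo the relations already present at the $y_\beta$, is generated by the single mesh relator $r_x$. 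Note that this middle-exactness step again does not use Assumption~\ref{main-assumption}; the Assumption enters only for the injectivity of $\tau(x)^\wedge\to\bigoplus y^\wedge$, exactly where you locate it, via the bijection $\beta\leftrightarrow\sigma(\beta)$ between arrows into $x$ and arrows out of $\tau(x)$. Your duality reduction of (b) and (d) to the opposite category is legitimate and correctly matches the two halves of the Assumption.
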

The proof is an exercise. By the isomorphisms~(\ref{eq:Yoneda}), we 
immediately obtain the following corollary.

\begin{corollary} \label{cor:RHom} For each $\cR_C$-module $M$ and
each vertex $x$ of $\Z Q_0$, we have canonical isomorphisms 
in the derived category of vector spaces, where the
first term on the right is always in degree $0$:
\begin{itemize}
\item[a)] $\RHom(M, S_{\sigma(x)}) = (DM(\sigma(x)) \to DM(x))$,
\item[b)] $\RHom(S_{\sigma(x)},M)= (M(\sigma(x)) \to M(\tau(x)))$,
\item[c)] $\RHom(M, S_x) = (DM(x) \to \bigoplus_{x\to y} DM(y) \to DM(\tau^{-1}(x)))$,
\item[d)] $\RHom(S_x, M) = (M(x) \to \bigoplus_{y\to x} M(y) \to M(\tau(x)))$.
\item[e)] In particular, we have a canonical isomorphism
\begin{equation} \label{eq:duality}
D\RHom(S_x, M) = \RHom(M, \Sigma^2 S_{\tau(x)}) \ko
\end{equation}
where $\Sigma$ denotes the suspension functor.
\end{itemize}
\end{corollary}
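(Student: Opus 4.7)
The plan is to deduce each of the five isomorphisms mechanically from the corresponding (co-)resolution in Lemma~\ref{lemma:resolutions} combined with the Yoneda-type formulas (\ref{eq:Yoneda}). The key observation is that $x^\wedge$ is projective and $x^\vee$ is injective, so these resolutions can be used directly to compute $\RHom$.

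For (a), I would use the injective coresolution $0\to S_{\sigma(x)}\to \sigma(x)^\vee \to x^\vee \to 0$ of part b) of the lemma: applying $\Hom(M,?)$ and (\ref{eq:Yoneda}) gives the two-term complex $DM(\sigma(x))\to DM(x)$ in degrees $0,1$. Symmetrically, for (b) I would take the projective resolution $0\to \tau(x)^\wedge \to \sigma(x)^\wedge \to S_{\sigma(x)}\to 0$ of part a), apply $\Hom(?,M)$, and read off $M(\sigma(x))\to M(\tau(x))$. Parts (c) and (d) follow exactly the same way, with (c) obtained from the length-three injective coresolution of $S_x$ in part d) of the lemma and (d) from the length-three projective resolution of $S_x$ in part c); the only new feature is that the middle term becomes the direct sum indexed by the arrows of $\Z\tilde Q$ incident at $x$.

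Finally, for (e) I would apply (c) to the (again non frozen) vertex $\tau(x)$ in place of $x$, which gives
\[
\RHom(M, S_{\tau(x)}) \;=\; \bigl(DM(\tau(x))\to \bigoplus_{\tau(x)\to y}DM(y)\to DM(x)\bigr)
\]
in degrees $0,1,2$. Shifting by $\Sigma^2$ places this complex in degrees $-2,-1,0$, where it is to be compared with $D\RHom(S_x,M)$ obtained from (d) by applying the $k$-duality $D$. The identification of the middle terms $\bigoplus_{\tau(x)\to y}DM(y)\cong \bigoplus_{y\to x}DM(y)$ is induced by the canonical bijection $\beta\mapsto\sigma(\beta)$ between arrows starting at $\tau(x)$ and arrows ending at $x$ in $\Z\tilde Q$, and the compatibility of the two differentials is an instance of the mesh relation at $x$.

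There is no serious obstacle: the statement is a direct unwinding of Lemma~\ref{lemma:resolutions} via (\ref{eq:Yoneda}). The only point that merits care is the matching of differentials in (e), where one must check that the $\sigma$-bijection on arrows transports the boundary map in the resolution of part c) to that in the resolution of part d) up to $k$-duality; this is exactly the content of the mesh relator $r_x$, so it is built into the definition of the category $\cR_C$.
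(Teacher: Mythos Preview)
Your proof is correct and follows exactly the approach the paper intends: the paper simply states that the corollary follows immediately from Lemma~\ref{lemma:resolutions} together with the Yoneda isomorphisms~(\ref{eq:Yoneda}), and your unwinding of (a)--(d) and the $\sigma$-bijection argument for (e) is precisely the content behind that one-line justification.
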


Let $\langle C\rangle$ denote the ideal of $\cR_C$ generated by
the identities of the vertices in $\sigma^{-1}(C)$. By Happel's theorem (\ref{thm:Happel}),
we have a fully faithful embedding
\[
H: \cR_C/\langle C \rangle \to \cd_Q \ko
\]
which is an equivalence if and only if $Q$ is a Dynkin quiver. If
$Q$ is Dynkin, let $\Sigma$ be the unique bijection of the
vertices of $\Z Q$ such that
\[
H(\Sigma x) = \Sigma H(x).
\]
If $Q$ is arbitrary acyclic, for each non frozen vertex $x\in\Z Q_0$, let
\begin{align*}
x^\wedge_\cd & = (\cR_C/\langle C\rangle)(?, x) = \cd_Q(?, x) 
\quad\mbox{and}\quad \\
x^\vee_\cd &= D(\cR_C/\langle C \rangle)(x,?)) = D \cd_Q(x, ?) \ko
\end{align*}
where, for simplicity, we omit the Happel functor $H$ from the notations.
Moreover, put
\begin{equation} \label{eq:pc-ic}
P_C(x) = \bigoplus_{\sigma(y)\in C} \cd_Q(y,x) \ten \sigma(y)^\wedge 
\quad\mbox{and}\quad
I_C(x)= \prod_{\sigma^{-1}(y)\in C} D\cd_Q(x,y) \ten \sigma^{-1}(y)^\vee.
\end{equation}
\begin{theorem}  \label{thm:rc-resolutions}
\begin{itemize}
\item[a)] Suppose that $Q$ is a Dynkin quiver.
For each non frozen vertex $x\in \Z Q_0$, we have a resolution
of $\cR_C$-modules
\begin{equation} \label{eq:proj-res-xd}
0 \to (\Sigma^{-1} x)^\wedge \to P_C(x) \to x^\wedge \to x^\wedge_\cd \to 0
\end{equation}
and a coresolution
\begin{equation} \label{eq:inj-res-xd}
0 \to x^\vee_\cd \to x^\vee \to I_C(x) \to (\Sigma x)^\vee \to 0.
\end{equation}
\item[b)] Suppose that $Q$ is not a Dynkin quiver.
For each non frozen vertex $x\in \Z Q_0$, we have a resolution
of $\cR_C$-modules
\[
0 \to P_C(x) \to x^\wedge \to x^\wedge_\cd \to 0
\]
and a coresolution
\[
0 \to x^\vee_\cd \to x^\vee \to I_C(x)  \to 0.
\]
\end{itemize}
\end{theorem}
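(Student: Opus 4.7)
The plan is to verify exactness of each sequence by evaluation at every vertex $u\in \Z\tilde Q_0$, which reduces the theorem to the exactness of finitely many complexes of finite-dimensional vector spaces. The surjection $x^\wedge\twoheadrightarrow x^\wedge_\cd$ is the canonical quotient by the ideal $\langle C\rangle$; at a non-frozen vertex $u$ it is the surjection $\cR_C(u,x)\twoheadrightarrow \cd_Q(u,x)$, and at a frozen vertex it vanishes. The map $P_C(x)\to x^\wedge$ is given on each summand by lifting $f\in\cd_Q(y,x)$ to a morphism $\tilde f\in\cR_C(\sigma(y),x)$ and composing; the choice of lift is immaterial modulo the image of the next differential. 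In the Dynkin case, the remaining map $(\Sigma^{-1}x)^\wedge\to P_C(x)$ encodes the Auslander--Reiten triangle in $\cd_Q$ ending at $x$, whose middle term decomposes into indecomposables labeled by vertices of $C$ thanks to the essential surjectivity of Happel's embedding (Theorem~\ref{thm:Happel}). The dual sequences are constructed symmetrically.

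Exactness at $x^\wedge_\cd$ is the definition of the quotient. Exactness in the middle reduces to the tautological statement that $\cR_C$-morphisms lying in $\langle C\rangle$ are precisely those factoring through a vertex of $\sigma^{-1}(C)$. The heart of the proof is identifying the kernel of $P_C(x)\to x^\wedge$ with $(\Sigma^{-1}x)^\wedge$ (Dynkin) or with zero (non-Dynkin). My approach is to compute $\dim\Ext^i_{\cR_C}(x^\wedge_\cd,S_u)$ for every simple $S_u$ using the injective coresolutions of simples from Lemma~\ref{lemma:resolutions}(b,\,d) and Corollary~\ref{cor:RHom}: the resulting spaces are the cohomologies of explicit three-term complexes built from the values $D\cd_Q(v,x)$ along the meshes of $\Z\tilde Q$. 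The main assumption~\ref{main-assumption} guarantees that the mesh sequences~\eqref{eq:left-exact-sequences} are exact, making the computation effective, and Happel's theorem identifies each such complex with the image, under $\Hom_{\cd_Q}(x,?)$ composed with the shift, of an Auslander--Reiten triangle of $\cd_Q$.

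The main obstacle is showing that these Ext-dimensions match the multiplicities of the projectives in the proposed resolution. In the non-Dynkin case, $\Ext^2_{\cR_C}(x^\wedge_\cd,S_u)=0$ for all $u$: indeed, the vanishing of $\Ext^2$ between simples of $\cs$ when no component of $Q$ is Dynkin (Theorem~\ref{thm:Ext-computation}) propagates to the corresponding vanishing for $x^\wedge_\cd$, yielding the length-two resolution. In the Dynkin case, a careful bookkeeping using Serre duality~\eqref{eq:Serre-duality} and the Auslander--Reiten formula~\eqref{eq:Auslander-Reiten-formula} together with the fact that $\cd_Q$ is hereditary shows that $\Ext^2_{\cR_C}(x^\wedge_\cd,S_u)$ is one-dimensional exactly when $u=\Sigma^{-1}x$ and vanishes otherwise, accounting for the single $(\Sigma^{-1}x)^\wedge$ summand at the left end. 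Finally, the coresolutions of $x^\vee_\cd$ follow by the symmetric argument, using the duality~\eqref{eq:duality} to translate projective resolutions of simples into injective coresolutions.
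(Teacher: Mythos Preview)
Your core strategy---reduce to computing $\Ext^i_{\cR_C}(x^\wedge_\cd,S_u)$ for all simples $S_u$ using the injective coresolutions of Lemma~\ref{lemma:resolutions} and then read off the terms of the minimal projective resolution---is exactly the paper's approach. The computation for a non-frozen vertex $u=y$ does, as you say, reduce to a three-term complex whose homology is governed by the Auslander--Reiten triangle of $\cd_Q$ ending at $\tau^{-1}(y)$; concretely, one finds that $\Ext^0$ and $\Ext^2$ are the values at $x$ of the simple $\cd_Q$-modules $S^\cd_y$ and $S^\cd_{\Sigma y}$ respectively. So the architecture is right.

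There is, however, a genuine gap in your treatment of the non-Dynkin case. You invoke Theorem~\ref{thm:Ext-computation} (the computation of $\Ext^p_{\cs}(S_{\sigma(x)},S_{\sigma(y)})$) to conclude the vanishing of $\Ext^2_{\cR_C}(x^\wedge_\cd,S_u)$. This is circular: Theorem~\ref{thm:Ext-computation} is Corollary~\ref{cor:ext-between-simples-in-sc}, which is deduced from Theorem~\ref{thm:res-simple-SC-modules}, which in turn relies on the very theorem you are proving. Moreover, even granting that result, the ``propagation'' from $\cs_C$-Ext between simples to $\cR_C$-Ext for $x^\wedge_\cd$ is not explained. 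The correct argument is direct and does not leave the current proof: from the computation above, $\Ext^2_{\cR_C}(x^\wedge_\cd,S_y)$ is one-dimensional precisely when $H(y)\cong \Sigma^{-1}H(x)$ in $\cd_Q$. When $Q$ is not Dynkin, Happel's embedding is not essentially surjective and its image (the $\tau$-orbits of the indecomposable projectives) contains no pair of objects differing by a suspension; hence this $\Ext^2$ vanishes for every $y$. That is all that is needed.

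Two smaller remarks. First, your description of the map $(\Sigma^{-1}x)^\wedge\to P_C(x)$ as ``encoding the Auslander--Reiten triangle ending at $x$'' is not quite right and is in any case unnecessary: once the Ext-dimensions are known, minimality determines the resolution up to isomorphism, and no explicit description of the differentials is required. Second, the main assumption~\ref{main-assumption} enters only through Lemma~\ref{lemma:resolutions}, not as an independent ingredient in the Ext computation.
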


\begin{proof} Note that the category $\cR_C$ satisfies the assumptions~\ref{as:directed-category}. Thus, to check the claims, it suffices to compute the extensions between
the simple modules $S_u$, where $u$ is any vertex of $\Z \tilde{Q}$, 
and $x^\wedge_\cd$ respectively $x^\vee_\cd$. 
For this, we use Lemma~\ref{lemma:resolutions}. 
Let $y$ be a non frozen vertex. We have
\[
\RHom(x^\wedge_\cd, S_{\sigma(y)}) = 
\RHom(x^\wedge_\cd, \sigma(y)^\vee \to y^\vee) =
(0 \to D\cd_Q(y,x)).
\]
This yields the term $P_C(x)$ in the resolution~(\ref{eq:proj-res-xd}).
Similarly, we find
\begin{align*}
\RHom(x^\wedge_\cd, S_y) &= 
\RHom(x^\wedge,(y^\vee \to \bigoplus_{y\to z} z^\vee\to \tau^{-1}(y)^\vee)) \\
&= (D\cd_Q(y,x)\to \prod_{y\to z} D\cd_Q(z,x) \to D\cd_Q(\tau^{-1}(y), x)).
\end{align*}
This complex is also obtained by applying $\Hom(x^\wedge,?)$ to the
complex of $\cd_Q$-modules
\[
y_\cd^\vee \to \prod_{y\to z} z^\vee_\cd \to \tau^{-1}(y)^\vee_\cd
\]
which is associated with the Auslander--Reiten triangle
\[
y \to \bigoplus_{y\to z} z \to \tau^{-1}(y) \to \Sigma y
\]
of $\cd_Q$. Thus, we have an exact sequence of $\cd_Q$-modules
\[
0 \to S^\cd_y \to y_\cd^\vee \to \prod_{y\to z} z^\vee_\cd \to \tau^{-1}(y)^\vee_\cd
\to S^\cd_{\Sigma y} \to 0 \ko
\]
where $S^\cd_y$ is the simple $\cd_Q$-module associated with $y$.
It follows that the homology of $\RHom(x^\wedge_\cd, S_y)$ is given
by $S^\cd_y(x)$ in degree $0$ and $S^\cd_{\Sigma y}(x) = S^\cd_y(\Sigma^{-1}(x))$
in degree $2$. This yields the projective resolution in a). In the
non Dynkin case, no object $H(y)$, $y\in \Z Q_0$, is isomorphic to
$\Sigma^{-1} H(x)$. Thus, the homology of $\RHom(x^\wedge_\cd, S_y)$
in degree $2$ vanishes and we find the projective resolution in b). 
A similar argument yields the injective co-resolutions in a) and b).
\end{proof}

\subsection{Resolutions for the simple $\cs_C$-modules}
\label{ss:res-simple-SC-modules}
We keep the notations and assumptions of section~\ref{ss:res-simple-RC-modules}.
Notice that for each frozen vertex $\sigma(x)$, $x\in \Z Q_0$, the restriction of the
free $\cR_C$-module $\sigma(x)^\wedge_{\cR_C}$ to $\cs_C$ is the
free $\cs_C$-module $\sigma(x)^\wedge_{\cs_C}$ and similarly
for the co-free modules associated with the frozen vertices. In particular,
the restrictions  to $\cs_C$ of the modules $P_C(x)$ and $I_C(x)$ defined in
(\ref{eq:pc-ic}) are still projective respectively injective.
By abuse of notation, we denote the restricted modules by
the same symbols $P_C(x)$ and $I_C(x)$. 

\begin{theorem} \label{thm:res-simple-SC-modules}
Suppose that $Q$ is connected. Let $x$ be a vertex of $\Z Q$. 
\begin{itemize}
\item[a)] If $Q$ is a Dynkin quiver, the simple $\cs_C$-module
$S_{\sigma^{-1}(x)}$ admits a minimal projective resolution of the form
\[
\ldots \to P_C(\Sigma^{-2} x) \to 
P_C(\Sigma^{-1} x) \to P_C(x) \to \sigma^{-1}(x)^\wedge \to 
S_{\sigma^{-1}(x)} \to 0
\]
and the simple $\cs_C$-module $S_{\sigma(x)}$ admits a minimal
injective resolution of the form
\[
0 \to S_{\sigma(x)} \to \sigma(x)^\vee \to I_C(x) \to I_C(\Sigma x) \to
I_C(\Sigma^2 x) \to \ldots\ .
\]
\item[b)] If $Q$ is not a Dynkin quiver, the simple $\cs_C$-module
$S_{\sigma^{-1}(x)}$ admits a minimal projective resolution of the form
\[
0 \to P_C(x) \to \sigma^{-1}(x)^\wedge \to S_{\sigma^{-1}(x)} \to 0
\]
and the simple $\cs_C$-module $S_{\sigma(x)}$ admits a minimal
injective resolution of the form
\[
0 \to S_{\sigma(x)} \to \sigma(x)^\vee \to I_C(x)  \to 0.
\]
\end{itemize}
\end{theorem}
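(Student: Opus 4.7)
The plan is to build the resolutions by splicing together two families of short exact sequences, taken from Lemma~\ref{lemma:resolutions} and Theorem~\ref{thm:rc-resolutions}, and then transporting them from $\Mod(\cR_C)$ to $\Mod(\cs_C)$ via the exact restriction functor $\res$. The only inputs about $\cs_C$ that we need beyond these two lemmas are the remarks preceding the statement, namely that if $u=\sigma^{-1}(c)$ with $c\in C$ then $\res(u^\wedge_{\cR_C})=u^\wedge_{\cs_C}$ and similarly for cofrees, so that the restrictions $P_C(x)$ and $I_C(x)$ remain projective and injective over $\cs_C$.

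First I would apply Lemma~\ref{lemma:resolutions}(a) with the non-frozen vertex $\tau^{-1}(x)$ in place of $x$, noting that $\sigma(\tau^{-1}(x))=\sigma^{-1}(x)$ and $\tau(\tau^{-1}(x))=x$. This produces, in $\Mod(\cR_C)$, the short exact sequence
\[
0 \to x^\wedge \to \sigma^{-1}(x)^\wedge \to S_{\sigma^{-1}(x)} \to 0 \ko
\]
whose restriction to $\cs_C$ has $\sigma^{-1}(x)^\wedge$ free and $S_{\sigma^{-1}(x)}$ simple, but whose first term $\res(x^\wedge)$ is not projective. To remedy this, I would invoke Theorem~\ref{thm:rc-resolutions}, which provides a finite projective resolution of $x^\wedge_\cd$ over $\cR_C$; the key observation is that $x^\wedge_\cd = \cd_Q(?,x)$ is supported on the non-frozen vertices only, so $\res(x^\wedge_\cd)=0$. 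Applying $\res$ to the sequence of Theorem~\ref{thm:rc-resolutions} yields, in the non-Dynkin case, an isomorphism $\res(x^\wedge)\cong P_C(x)$ in $\Mod(\cs_C)$, and in the Dynkin case the short exact sequence
\[
0 \to \res\bigl((\Sigma^{-1}x)^\wedge\bigr) \to P_C(x) \to \res(x^\wedge) \to 0.
\]
Splicing with the sequence from the previous step then gives the non-Dynkin resolution in part~b), and in part~a) an exact sequence ending in $S_{\sigma^{-1}(x)}$ whose left term is again of the form $\res((\Sigma^{-1}x)^\wedge)$, to which the same reduction applies with $x$ replaced by $\Sigma^{-1}x$. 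Iterating produces the infinite projective resolution in part~a). The injective co-resolutions of $S_{\sigma(x)}$ are obtained by the exactly dual argument, starting from Lemma~\ref{lemma:resolutions}(b) and using the co-resolution of $x^\vee_\cd$ in Theorem~\ref{thm:rc-resolutions} together with $\res(x^\vee_\cd)=0$.

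It remains to verify \emph{minimality}. Because $\cR_C$ satisfies Assumption~\ref{as:directed-category}, so does $\cs_C$, and I would use the criterion recalled in section~\ref{ss:notations-and-recollections}: a projective resolution by coproducts of representables is minimal iff the multiplicity of $u^\wedge$ in its $i$-th term equals $\dim \Ext^i_{\cs_C}(S_{\sigma^{-1}(x)}, S_u)$. It therefore suffices to check that the constructed sequence realizes these Ext dimensions, which can be read off directly from Corollary~\ref{cor:RHom} applied to the summands of the $P_C(\Sigma^{-i}x)$; alternatively one can check that each connecting differential lifts from a non-identity morphism in $\cd_Q$ (via the Happel functor $H$) and hence lands in the radical.

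The main obstacle I anticipate is this minimality check: the splicing procedure above produces \emph{some} projective resolution, but showing it is the minimal one requires a careful bookkeeping of how the shifts $\Sigma^{-i}x$ interact with the summand multiplicities in $P_C(\Sigma^{-i}x)$, and checking that the iterated splicing does not introduce spurious projective-injective summands that could split off. Everything else is formal manipulation of short exact sequences and the exactness of $\res$.
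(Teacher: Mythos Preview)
Your proposal is correct and follows essentially the same route as the paper: start from the short exact sequence of Lemma~\ref{lemma:resolutions}(a) (with $\tau^{-1}(x)$ in place of $x$), restrict to $\cs_C$, then use Theorem~\ref{thm:rc-resolutions} together with $\res(x^\wedge_\cd)=0$ to either identify $\res(x^\wedge)\cong P_C(x)$ in the non-Dynkin case or obtain short exact sequences
\[
0 \to \res\bigl((\Sigma^{-(p+1)}x)^\wedge\bigr) \to P_C(\Sigma^{-p}x) \to \res\bigl((\Sigma^{-p}x)^\wedge\bigr) \to 0
\]
to splice in the Dynkin case; the injective side is dual.

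One remark: the paper's own proof does \emph{not} address minimality at all --- it simply constructs the resolution and stops. So the concern you flag as the ``main obstacle'' is one the authors silently pass over. Your instinct that minimality requires checking the differentials land in the radical is right, and the cleanest way to see it is the second alternative you mention: the map $P_C(\Sigma^{-p}x)\to P_C(\Sigma^{-(p-1)}x)$ is built from morphisms $\sigma(y)^\wedge \to \sigma(z)^\wedge$ induced by non-identity morphisms $H(z)\to H(y)$ in $\cd_Q$ (since $y\neq z$ whenever $\cd_Q(y,\Sigma^{-p}x)$ and $\cd_Q(z,\Sigma^{-(p-1)}x)$ are both non-zero), hence radical maps; the map $P_C(x)\to\sigma^{-1}(x)^\wedge$ is radical because no summand of $P_C(x)$ equals $\sigma^{-1}(x)^\wedge$. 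This is not difficult, but you are right that it deserves a sentence.
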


\begin{proof} 
Part a) of Lemma~\ref{lemma:resolutions} yields an exact sequence
of $\cR_C$-modules
\[
0 \to x^\wedge \to \sigma^{-1}(x)^\wedge \to S_{\sigma^{-1}(x)} \to 0.
\]
We restrict it to $\cs_C$ and now have to construct a minimal resolution
for $\res(x^\wedge)$, where $\res$ denotes the restriction functor. 
If $Q$ is not a Dynkin quiver, part b) of Theorem~\ref{thm:rc-resolutions} yields the exact sequence of $\cR_C$-modules
\[
0 \to  P_C(x) \to x^\wedge \to x^\wedge_\cd \to 0.
\]
Since the restriction of $x^\wedge_\cd$ to $\cs_C$ vanishes, we find
that $\res(x^\wedge)$ is isomorphic to the restriction of $P_C(x)$ to $\cs_C$,
which yields the projective resolution in b). If $Q$ is a Dynkin quiver, 
part a) of Theorem~\ref{thm:rc-resolutions} yields
the exact sequence of $\cR_C$-modules
\[
0 \to (\Sigma^{-1} x)^\wedge \to P_C(x) \to x^\wedge \to x^\wedge_\cd \to 0.
\]
Since the restriction $\res(x^\wedge_\cd)$ vanishes, we obtain a short
exact sequence
\[
0 \to \res((\Sigma^{-1} x)^\wedge) \to P_C(x) \to \res(x^\wedge)  \to 0
\]
and more generally an exact sequence
\[
0 \to \res((\Sigma^{-(p+1)} x)^\wedge) \to P_C(\Sigma^{-p}x) \to
 \res((\Sigma^{-p}x)^\wedge)  \to 0
\]
for each $p\geq 0$. We obtain the desired resolution by splicing these
sequences together. The construction of the injective co-resolutions is
analogous.
\end{proof}

\begin{corollary} \label{cor:ext-between-simples-in-sc}
Let $x$ and $y$ be vertices of $\Z Q$. For each $p\geq 1$, 
we have an isomorphism
\[
\Ext^p_{\cs_C}(S_{\sigma(x)}, S_{\sigma(y)}) = \cd_Q(H(x), \Sigma^p H(y)).
\]
If $Q$ is not a Dynkin quiver, these spaces vanish for all $p\geq 2$. 
\end{corollary}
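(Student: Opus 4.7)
The plan is to compute $\Ext^p_{\cs_C}(S_{\sigma(x)},S_{\sigma(y)})$ by applying the functor $\Hom_{\cs_C}(S_{\sigma(x)},-)$ to the minimal injective resolution of $S_{\sigma(y)}$ provided by Theorem~\ref{thm:res-simple-SC-modules} and reading off the cohomology in degree $p \geq 1$. In the non-Dynkin case, the resolution
\[
0 \to S_{\sigma(y)} \to \sigma(y)^\vee \to I_C(y) \to 0
\]
has length one, so $\Ext^p_{\cs_C}(S_{\sigma(x)},S_{\sigma(y)})$ vanishes for $p \geq 2$, which already gives the second assertion of the corollary; for $p=1$, one must identify the cokernel of $\Hom_{\cs_C}(S_{\sigma(x)},\sigma(y)^\vee) \to \Hom_{\cs_C}(S_{\sigma(x)},I_C(y))$ with $\cd_Q(H(x),\Sigma H(y))$, which will come from the explicit evaluation described below.

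In the Dynkin case, the minimal injective resolution is infinite,
\[
0 \to S_{\sigma(y)} \to \sigma(y)^\vee \to I_C(y) \to I_C(\Sigma y) \to I_C(\Sigma^2 y) \to \cdots,
\]
and $\Ext^p_{\cs_C}(S_{\sigma(x)},S_{\sigma(y)})$ appears as the $p$-th cohomology of the complex obtained by applying $\Hom_{\cs_C}(S_{\sigma(x)},-)$. To evaluate the individual terms I would combine the dual Yoneda formula $\Hom(M,u^\vee) = D(M(u))$ from~(\ref{eq:Yoneda}) with the explicit description $I_C(z) = \prod_{\sigma^{-1}(w)\in C} D\cd_Q(z,w)\otimes \sigma^{-1}(w)^\vee$ from~(\ref{eq:pc-ic}). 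Since $S_{\sigma(x)}$ is supported at the single object $\sigma(x)$, each such product collapses to (at most) the factor indexed by the $w$ with $\sigma^{-1}(w) = \sigma(x)$; similarly the $\sigma(y)^\vee$ term contributes $k\delta_{x,y}$, visible only in degree zero. Matching the result with $\cd_Q(H(x),\Sigma^p H(y))$ then uses the identification $H(\Sigma y) = \Sigma H(y)$ built into the definition of $\Sigma$ on vertices of $\Z Q$.

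A conceptually cleaner route, which I would develop in parallel as a consistency check, is to note that the $\cs_C$-resolution of $S_{\sigma(y)}$ is obtained by restricting to $\cs_C$ the splicing of the short exact sequence of Lemma~\ref{lemma:resolutions}(b) with the injective coresolution of $y^\vee$ furnished by Theorem~\ref{thm:rc-resolutions}. The terms of the form $(\Sigma^j y)^\vee_\cd$ appearing in that coresolution lie in $\cR_C/\langle C\rangle$-modules and therefore vanish on $\cs_C$, so the restriction is indeed the stated injective resolution; moreover the computation of $\Ext^p_{\cs_C}$ thereby parallels the computation of $\Ext^p_{\cR_C}(x^\wedge_\cd,S_{\sigma(\Sigma^{p-1} y)})$ already performed in the proof of Theorem~\ref{thm:rc-resolutions}. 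I expect no serious obstacle beyond careful bookkeeping: the only delicate point is to keep track of how the shift $\Sigma^j$ applied to vertices of $\Z Q$ corresponds, via Happel's embedding, to the suspension in $\cd_Q$, so that the $p$-th cohomology of the complex is correctly identified with $\cd_Q(H(x),\Sigma^p H(y))$.
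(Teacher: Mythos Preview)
Your approach coincides with the paper's: apply $\Hom_{\cs_C}(S_{\sigma(x)},-)$ to the minimal injective resolution of $S_{\sigma(y)}$ from Theorem~\ref{thm:res-simple-SC-modules}. Because the resolution is \emph{minimal}, the induced complex $\Hom_{\cs_C}(S_{\sigma(x)}, I^\bullet)$ has zero differentials, so there is no cokernel to compute: for $p\geq 1$ one has directly $\Ext^p_{\cs_C}(S_{\sigma(x)}, S_{\sigma(y)}) = \Hom_{\cs_C}(S_{\sigma(x)}, I_C(\Sigma^{p-1}y))$ in the Dynkin case (and $\Hom_{\cs_C}(S_{\sigma(x)}, I_C(y))$ for $p=1$ otherwise). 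The paper exploits this without comment.

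One step in your outline is understated. After collapsing the product defining $I_C(\Sigma^{p-1}y)$ to the single factor indexed by the $w$ with $\sigma^{-1}(w)=\sigma(x)$, i.e.\ $w=\tau(x)$, you obtain $D\cd_Q(\Sigma^{p-1}y,\tau x)$, not yet $\cd_Q(H(x),\Sigma^p H(y))$. Converting the former into the latter requires Serre duality~(\ref{eq:Serre-duality}) together with $\nu=\Sigma\tau$:
\[
D\cd_Q(\Sigma^{p-1}y,\tau x)=\cd_Q(\tau x,\nu\Sigma^{p-1}y)=\cd_Q(\tau x,\Sigma^p\tau y)=\cd_Q(x,\Sigma^p y).
\]
The compatibility $H(\Sigma y)=\Sigma H(y)$ alone does not accomplish this; it only lets you rewrite $\Sigma^{p-1}y$ inside $\cd_Q$. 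This is precisely the manipulation the paper carries out in its final displayed line.
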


\begin{proof} We use the minimal injective co-resolutions in 
Theorem~\ref{thm:res-simple-SC-modules}. Suppose that $Q$ is
a Dynkin quiver and $p\geq 1$. We have 
\begin{align*}
\Ext^p_{\cs_C}(S_{\sigma(x)}, S_{\sigma(y)}) &= 
	\Hom(S_{\sigma(x)}, I_C(\Sigma^{p-1}y))\\
&=\Hom(S_{\sigma(x)}, \prod D\cd_Q(\Sigma^{p-1}y,z)\ten \sigma^{-1}(z)^\vee).
\end{align*}
Now the space $\Hom(S_{\sigma(x)}, \sigma^{-1}(z)^\vee)$ vanishes unless
$\sigma(x) = \sigma^{-1}(z)$, that is $z=\sigma^2(x) = \tau(x)$. Hence we find
\begin{align*}
\Ext^p_{\cs_C}(S_{\sigma(x)}, S_{\sigma(y)}) &= D\cd_Q(\Sigma^{p-1} y, \tau(x))
=D\cd_Q(\Sigma^{p-1} y, \Sigma^{-1} Sx)\\
&= \cd_Q(x, \Sigma^p y).
\end{align*}
\end{proof}

\section{The stratifying functor $\Phi$}
\label{s:stratifying-functor}

\subsection{Construction of $\Phi$} \label{ss:construction-of-Phi}
Let $Q$ be a connected acyclic quiver and $C$ a subset of the set of frozen
vertices of the repetition quiver $\Z \tilde{Q}$ which satifies 
Assumption~\ref{main-assumption}. Notice that $\Mod(\cR_C)$ is a subcategory
of $\Mod(\cR)$ and similarly $\Mod(\cs_C)$ a subcategory of
$\Mod(\cs)$. Let $\res^C: \Mod(\cR_C) \to \Mod(\cs_C)$
be the restriction functor. Clearly, it is just the restriction of the 
functor $\res: \Mod(\cR) \to \Mod(\cs)$ to the subcategories under
consideration. The left and right adjoints $K_L$ and $K_R$ of
$\res$ take the subcategory $\Mod(\cs_C)$ of $\Mod(\cs)$ to
$\Mod(\cR_C)$ and thus induce left and right adjoints
$K_L^C$ and $K_R^C$ of $\res^C$ so that we have 
\[
\xymatrix{
\Mod(\cR_C) \ar[d]|-*+{{\scriptstyle \res^C}} \\
\Mod(\cs_C). \ar@<3ex>[u]^{K^C_L} \ar@<-3ex>[u]_{K_R^C}
}
\]
The functor $\res^C$ is still a localization of abelian categories in the sense of \cite{Gabriel62}.  {\em In the sequel, we will omit the exponents $C$ in the 
notation for the functors $K^C_L$ and $K_R^C$ and simply write $K_L$ and
$K_R$}. We have the canonical morphism 
\begin{equation} \label{eq:KCLtoKCR}
\can: K_L \to K_R.
\end{equation}
(which is just the restriction to $\Mod(\cs_C)$ of the canonical
morphism between the non restricted functors).
By definition, the {\em intermediate Kan extension $K_{LR}$} is
its image, the {\em functor $KK$} its kernel and the {\em functor $CK$}
its cokernel so that we have the following diagram of functors from
$\Mod(\cs_C)$ to $\Mod(\cR_C)$: 
\[
\xymatrix{0 \ar[r] & KK \ar[r] & K_L \ar[rr] \ar@{->>}[dr] & & K_R \ar[r] & CK \ar[r] & 0. \\
                           &               &                     &K_{LR} \ar@{>->}[ru] &   &                & }
\]
The {\em kernel $\cn$} of the functor $\res$ is the abelian subcategory
formed by the modules which vanish on the frozen vertices. Clearly,
the category $\cn$ is isomorphic to the category of modules over the
quotient $\cR_C/\langle \cs_C\rangle$ of $\cR_C$ by the ideal
generated by the identities of the objects of $\cs_C$. Notice
that this quotient is isomorphic to the mesh category $k(\Z Q)$.
We will identify
\[
\cn = \Mod(\cR_C/\langle \cs_C \rangle) = \Mod(k(\Z Q)).
\]
In Theorem~\ref{thm:representability} below, we will see that for each
finite-dimensional $\cs_C$-module $M$, the $k(\Z Q)$-module
$CK(M)$ is a finitely cogenerated injective module. Thus, there is
an object $\Phi(M)$ in the derived category $\cd_Q$ such that
\[
CK(M)(x) = D\Hom(\Phi(M), H(x))
\]
functorially in $x\in \Z Q$. Clearly, the map $M \mapsto \Phi(M)$ underlies
a {\em $k$-linear functor}
\[
\Phi: \mod(\cs_C) \to \cd_Q. 
\]
We call $\Phi$ the {\em stratifying functor} because of Theorem~\ref{thm:strata}.
Our construction does not make it clear which exactness properties
the stratifying functor has but we will show the
following theorem. Recall from section~\ref{ss:stratification}, that
if no connected component of $Q$ is a Dynkin quiver,
then $\cv$ denotes the additive subcategory of $\cd_Q$
whose indecomposable objects are the sums of
objects in the image of Happel's embedding. The
category $\cv$ becomes exact when endowed with
all the sequences giving rise to triangles in $\cd_Q$.

\begin{theorem} \label{thm:exactness-of-Phi}
\begin{itemize}
\item[a)] If no connected component of $Q$ is a Dynkin quiver,
then $\Phi$ is exact as a functor $\mod(\cs_C) \to \cv$.
\item[b)] If $Q$ is a Dynkin quiver, then $\Phi$ underlies a
$\delta$-functor $\mod(\cs_C) \to \cd_Q$.
\end{itemize}
\end{theorem}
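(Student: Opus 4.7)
By the very definition of $\Phi$, combined with Proposition~\ref{prop:description-KK-CK}, we have functorial identifications
\[
CK(M)=D\Hom_{\cd_Q}(\Phi(M),H(?)) \quad\text{and}\quad KK(M)=\Hom_{\cd_Q}(H(?),\tau\Phi(M)) ,
\]
so the $(\delta\text{-})$functor property of $\Phi$ reduces to the corresponding exactness property of the functors $CK$ and $KK$. The global plan is to apply the Snake Lemma to the natural transformation $\can:K_L\Rightarrow K_R$: since $K_L$ is right exact and $K_R$ is left exact (as adjoints of the exact restriction functor $\res$), every short exact sequence $0\to M'\to M\to M''\to 0$ in $\mod(\cs_C)$ yields a six-term exact sequence relating $KK(M^\bullet)$ and $CK(M^\bullet)$ in $\Mod(k(\Z Q))$.

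\textbf{Non-Dynkin case.} Corollary~\ref{cor:ext-between-simples-in-sc} gives $\Ext^{\geq 2}_{\cs_C}(S,S')=0$ between simples, and combined with the two-term projective and injective resolutions of Theorem~\ref{thm:res-simple-SC-modules}(b) this forces every finite-dimensional $\cs_C$-module to have projective and injective dimension $\leq 1$. Lifting such a two-term projective resolution via $K_L$, and using Theorem~\ref{thm:rc-resolutions}(b) together with Lemma~\ref{lemma:resolutions}(a) to check that the resulting morphism between projective $\cR_C$-modules is still injective, one concludes $L_1K_L=0$, so $K_L$ is exact; dually $K_R$ is exact. The snake-lemma sequence is then genuinely six-term exact and, after applying the identifications above and Serre duality~(\ref{eq:Serre-duality}), it becomes the long exact sequence induced by a triangle $\Phi(M')\to\Phi(M)\to\Phi(M'')\to\Sigma\Phi(M')$ in $\cd_Q$. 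Since all three first terms lie in $\cv$ by construction, this is precisely a conflation of the exact category $\cv$.

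\textbf{Dynkin case.} Now $K_L$ and $K_R$ acquire genuine higher derived functors (the minimal resolutions of Theorem~\ref{thm:res-simple-SC-modules}(a) are infinite), so I would bypass the Snake Lemma and instead factor $\Phi$ through the syzygy functor
\[
\mod(\cs_C)\xrightarrow{\Omega}\ul{\gpr}(\cs_C)\iso\cd_Q
\]
provided by Theorem~\ref{thm:stable-category-of-Gorenstein-projectives}. Because $\cs_C$ is weakly Gorenstein of dimension~$1$, the first syzygy of any finitely presented module is Gorenstein-projective. The Horseshoe Lemma turns a short exact sequence of $\cs_C$-modules into a short exact sequence of projective presentations, and taking kernels produces a conflation $0\to\Omega M'\to\Omega M\to\Omega M''\to 0$ in the Frobenius category $\gpr(\cs_C)$. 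Stabilizing converts this into a triangle $\Omega M'\to\Omega M\to\Omega M''\to\Sigma\Omega M'$ in $\ul{\gpr}(\cs_C)\iso\cd_Q$, yielding the required $\delta$-structure.

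The main obstacle is to identify $\Phi$ with this syzygy composition in the Dynkin case. I would first verify it on the simples $S_{\sigma(x)}$: by Theorem~\ref{thm:stable-category-of-Gorenstein-projectives}, the first syzygy $\Omega S_{\sigma(x)}$ arising from the resolution of Theorem~\ref{thm:res-simple-SC-modules}(a) is sent to $H(x)=\Phi(S_{\sigma(x)})$ under the equivalence $\ul{\gpr}(\cs_C)\iso\cd_Q$. To extend the identification to arbitrary finite-dimensional $\cs_C$-modules, I would compute $CK(M)$ explicitly by Kan-extending a minimal projective presentation of $M$, match the result with $D\Hom_{\cd_Q}(\Omega M,H(?))$ using the resolutions of $x^\wedge_\cd$ from Theorem~\ref{thm:rc-resolutions}(a), and conclude by Yoneda. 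Once this identification is in place, the $\delta$-functor structure of $\Phi$ follows directly from that of $\Omega$.
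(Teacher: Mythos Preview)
Your approach to part~(b) is essentially the paper's: one redefines $\Phi$ as the composition of $\Omega$ with the inverse of the equivalence $\ul{\gpr}(\cs_C)\iso\cd_Q$, observes that this is automatically a $\delta$-functor, and then proves Proposition~\ref{prop:description-KK-CK} to verify that it agrees with the original definition via $CK$. The paper carries out that identification directly for arbitrary $M$ (not only simples) by a three-step computation of $CK(M)(x)$ in terms of $\ul{\gpr}(\cs_C)$, so your last paragraph is on the right track and only needs to be made precise.

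For part~(a), however, there is a genuine gap. You correctly obtain the six-term exact sequence in $\Mod(k(\Z Q))$ from the snake lemma, and after the identifications it does read, at each vertex $x$, as the six-term sequence one would expect from a triangle. But having such a sequence for every $H(x)$ does \emph{not} by itself produce a triangle $\Phi(M')\to\Phi(M)\to\Phi(M'')\to\Sigma\Phi(M')$: there is no connecting morphism yet, and the sentence ``it becomes the long exact sequence induced by a triangle'' assumes what you want to prove. Concretely, the obstruction is at the deflation end: from the vanishing of $\Hom(-,H(x))$ on the cokernel of $\Phi(M)\to\Phi(M'')$ in the ambient abelian heart you cannot conclude that this cokernel is zero, because it might be a torsion object (a regular module), and those satisfy $\Hom(-,H(x))=0$ for all $x$ anyway. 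The paper closes this gap by working in the category $\Lex(\cv^{op})$ of left exact functors and invoking a criterion from \cite{Keller90}: a sequence in $\cv$ is a conflation iff the associated sequence of representable functors is short exact in $\Lex(\cv^{op})$. The non-trivial point is then that the cokernel of $\Hom_\cv(\Phi(M),?)\to\Hom_\cv(\Phi(M'),?)$ in $\Mod(\cv^{op})$ is \emph{effaceable}; this follows because it embeds into $D\,KK(M'')$, which is right bounded, and the Auslander--Reiten conflations of $\cv$ provide enough inflations to efface any right-bounded left $\cv$-module. (Your argument for the exactness of $K_L$ and $K_R$ is also somewhat roundabout; the paper instead observes that $\res$ preserves projectives and finitely cogenerated injectives, by Theorem~\ref{thm:res-simple-SC-modules}(b), whence both adjoints are exact.)
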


We will prove part a) in section~\ref{ss:stratifying-functor-non-Dynkin-case}. 
Part b) will follow from the alternative construction of $\Phi$ via Gorenstein homological
algebra in section~\ref{ss:description-of-Phi-via-Kan-extensions}.

\subsection{The representability theorem} \label{ss:representability-theorem}
Our goal in this section is to prove Theorem~\ref{thm:representability}.

\begin{lemma} \label{lemma:characterization-image-KR-KL}
An $\cR_C$-module $M$ belongs to the image
of $K_R$, respectively $K_L$, if and only if, for each $N\in \cn$, we have
\[
\Hom(N,M)=0 \mbox{ and } \Ext^1(N,M)=0 \ko
\]
respectively 
\[
\Hom(M,N)=0 \mbox{ and } \Ext^1(M,N)=0.
\]
\end{lemma}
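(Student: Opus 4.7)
The plan is to exploit the fact that both Kan extensions are fully faithful, so that the counit $\res K_R \to \id$ and the unit $\id \to \res K_L$ are natural isomorphisms. This turns $\res$ into a Serre localization with kernel $\cn$, and the lemma becomes the standard characterization of the essential image of such a right (respectively left) adjoint section. I shall carry out the argument for $K_R$; the case of $K_L$ is formally dual.

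For the ``only if'' direction, suppose $M \cong K_R(L)$ for some $L \in \Mod(\cs_C)$. For $N \in \cn$, adjunction yields $\Hom(N,M) = \Hom(\res N, L) = 0$ since $\res N = 0$. Given any extension $0 \to M \to E \to N \to 0$, the exact functor $\res$ turns the left map into an isomorphism $\res M \to \res E$. Using the adjunction bijection $\Hom(E, K_R L) = \Hom(\res E, L)$ together with the identification $\res K_R L \cong L$, I pull back the inverse of $\res M \to \res E$ to a retraction $E \to M$ of the inclusion, splitting the extension; thus $\Ext^1(N,M) = 0$.

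For the ``if'' direction, consider the unit $\eta_M : M \to K_R \res M$. The triangle identity $\epsilon_{\res M} \circ \res(\eta_M) = \id_{\res M}$, combined with the fact that $\epsilon_{\res M}$ is an isomorphism (as $K_R$ is fully faithful), forces $\res(\eta_M)$ to be an isomorphism. Since $\res$ is exact, both $K := \ker(\eta_M)$ and $C := \coker(\eta_M)$ lie in $\cn$. The hypothesis $\Hom(K, M) = 0$ kills the inclusion $K \hookrightarrow M$, so $K = 0$ and $\eta_M$ is injective. The remaining short exact sequence
\[
0 \to M \to K_R \res M \to C \to 0
\]
represents a class in $\Ext^1(C, M)$, which vanishes by hypothesis, so the sequence splits and $C$ becomes a direct summand of $K_R \res M$. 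But any such splitting produces an inclusion $C \hookrightarrow K_R \res M$, while $\Hom(C, K_R \res M) = \Hom(\res C, \res M) = 0$. Hence $C = 0$ and $\eta_M$ is an isomorphism.

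The $K_L$ case mirrors this via the counit $\epsilon_M : K_L \res M \to M$: one first uses $\Hom(M, \coker \epsilon_M) = 0$ to force surjectivity, then uses $\Ext^1(M, \ker \epsilon_M) = 0$ to split off the kernel, and finally invokes $\Hom(K_L \res M, \ker \epsilon_M) = 0$ to make that kernel vanish. No essential obstacle is anticipated; the only point requiring care is to apply the $\Hom$-vanishing and $\Ext^1$-vanishing hypotheses in the right order, so as to eliminate the kernel and the cokernel of the (co)unit in the proper sequence.
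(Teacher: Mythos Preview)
Your proof is correct. The paper does not actually give an argument here: it simply invokes the general characterization of the image of the adjoint of a localization functor from Gabriel's thesis \cite[Lemme~1, p.~370]{Gabriel62}. What you have written is precisely a self-contained version of that standard argument, using the unit (respectively counit) of the adjunction and the full faithfulness of the Kan extensions to identify the kernel and cokernel as objects of $\cn$, then eliminating them in turn via the $\Hom$- and $\Ext^1$-vanishing hypotheses. So your approach and the paper's are the same in substance; you have simply unpacked the cited reference.
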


\begin{proof} This is a general characterization of the image of the adjoint
of a localization functor, cf. Lemme~1, page~370 of \cite{Gabriel62}.
\end{proof}

\begin{lemma} \label{lemma:Hom-Ext-KLR}
Let $M$ be an $\cs_C$-module and $N$ in $\cn$. We have
canonical isomorphisms
\begin{align*}
\Hom(N, CK(M)) &\iso \Ext^1(N, K_{LR}M) \mbox{ and } \\
\Hom(KK(M),N) & \iso \Ext^1(K_{LR} M,N).
\end{align*}
\end{lemma}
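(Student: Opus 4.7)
The plan is to deduce both isomorphisms directly from the long exact Ext--sequences attached to the two canonical short exact sequences
\[
0 \to KK(M) \to K_L(M) \to K_{LR}(M) \to 0
\qquad\text{and}\qquad
0 \to K_{LR}(M) \to K_R(M) \to CK(M) \to 0,
\]
combined with the vanishing results encoded in Lemma~\ref{lemma:characterization-image-KR-KL}.

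For the first isomorphism, I would apply the functor $\Hom(N,-)$ to the second short exact sequence. This produces a long exact sequence containing the segment
\[
\Hom(N, K_R M) \to \Hom(N, CK(M)) \to \Ext^1(N, K_{LR}M) \to \Ext^1(N, K_R M).
\]
Since $K_R M$ lies in the image of $K_R$ and $N\in\cn$, Lemma~\ref{lemma:characterization-image-KR-KL} gives $\Hom(N,K_R M)=0$ and $\Ext^1(N,K_R M)=0$, so the connecting map yields the desired isomorphism. For the second isomorphism, I would symmetrically apply $\Hom(-,N)$ to the first short exact sequence, which produces
\[
\Hom(K_L M, N) \to \Hom(KK(M), N) \to \Ext^1(K_{LR}M, N) \to \Ext^1(K_L M, N),
\]
and both outer terms vanish by the other half of Lemma~\ref{lemma:characterization-image-KR-KL} applied to $K_L M$.

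The main thing to check is that the two short exact sequences are genuinely short exact in $\Mod(\cR_C)$: exactness at $K_L(M)$ and at $K_R(M)$ follows from the definitions of $KK$, $K_{LR}$, and $CK$ as kernel, image, and cokernel of the canonical morphism $\can : K_L\to K_R$. Everything else is then just the standard naturality of the long exact sequence, so the isomorphisms are canonical in $M$ and $N$. No technical obstacle is anticipated; the only conceptual point is that both $K_L M$ and $K_R M$ simultaneously satisfy the double vanishing property of Lemma~\ref{lemma:characterization-image-KR-KL} against all of $\cn$, which is exactly what allows the connecting homomorphism to become an isomorphism on both sides.
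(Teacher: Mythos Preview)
Your proposal is correct and follows exactly the same approach as the paper: apply $\Hom(N,?)$ to the sequence $0 \to K_{LR}(M) \to K_R(M) \to CK(M) \to 0$ and use Lemma~\ref{lemma:characterization-image-KR-KL} to kill the outer terms, then argue dually with $\Hom(?,N)$ and the sequence involving $K_L(M)$. The paper's proof is simply a more terse version of what you wrote.
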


\begin{proof} The first isomorphism is obtained by applying the functor
$\Hom(N,?)$ to the exact sequence
\[
0 \to K_{LR}(M) \to K_R(M) \to CK(M) \to 0
\]
and using Lemma~\ref{lemma:characterization-image-KR-KL}. Dually,
one obtains the second isomorphism.
\end{proof}

We will see in Theorem~\ref{thm:representability} below that if $M$ is
a finite-dimensional $\cs_C$-module, then $KK(M)$ is projective
and $CK(M)$ is injective. The main step in the proof is the following
lemma.

\begin{lemma} \label{lemma:exactness}
Let $M$ be a finite-dimensional $\cs_C$-module and 
\begin{equation} \label{eq:ex-seq-N}
0 \to N' \to N \to N'' \to 0
\end{equation}
an exact sequence in $\cn$. 
\begin{itemize}
\item[a)] If $N'$ is right bounded and pointwise finite-dimensional, the left exact
functor $\Hom(KK(M),?)$ transforms (\ref{eq:ex-seq-N}) into an exact sequence.
\item[b)] If $N''$ is left bounded and pointwise finite-dimensional, the left exact
functor $\Hom(?, CK(M))$ transforms (\ref{eq:ex-seq-N}) into an exact sequence.
\end{itemize}
\end{lemma}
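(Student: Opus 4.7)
The plan is to reduce both claims to $\Ext^1$-vanishings via long exact sequences and then exploit the Serre-type duality of Corollary~\ref{cor:RHom}(e) together with the characterisation of the essential images of $K_L$ and $K_R$ in Lemma~\ref{lemma:characterization-image-KR-KL}. The long exact $\Hom$-$\Ext$ sequence associated with $0\to N'\to N\to N''\to 0$ shows that exactness in (a) amounts to the vanishing $\Ext^1_{\cR_C}(KK(M),N')=0$ and, dually, that (b) amounts to $\Ext^1_{\cR_C}(N'',CK(M))=0$. Applying $\Hom_{\cR_C}(?,N')$ to the defining sequence $0\to KK(M)\to K_L(M)\to K_{LR}(M)\to 0$ and using Lemma~\ref{lemma:characterization-image-KR-KL}, which guarantees $\Hom_{\cR_C}(K_L(M),N)=\Ext^1_{\cR_C}(K_L(M),N)=0$ for every $N\in\cn$, produces a dimension-shift isomorphism $\Ext^1_{\cR_C}(KK(M),N')\cong \Ext^2_{\cR_C}(K_{LR}(M),N')$. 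Dually, applying $\Hom_{\cR_C}(N'',?)$ to $0\to K_{LR}(M)\to K_R(M)\to CK(M)\to 0$ yields $\Ext^1_{\cR_C}(N'',CK(M))\cong \Ext^2_{\cR_C}(N'',K_{LR}(M))$.

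The case of simple modules is then settled by Serre duality. For any non-frozen vertex $y$, extracting cohomology in each direction from Corollary~\ref{cor:RHom}(e) gives
\[
\Ext^2_{\cR_C}(K_{LR}(M),S_y)\cong D\Hom_{\cR_C}(S_{\tau^{-1}(y)},K_{LR}(M)), \quad \Ext^2_{\cR_C}(S_y,K_{LR}(M))\cong D\Hom_{\cR_C}(K_{LR}(M),S_{\tau(y)}).
\]
The first vanishes because $K_{LR}(M)$ embeds in $K_R(M)$ and Lemma~\ref{lemma:characterization-image-KR-KL} forces $\Hom_{\cR_C}(S_{\tau^{-1}(y)},K_R(M))=0$; the second vanishes because $K_{LR}(M)$ is a quotient of $K_L(M)$ and, again by that lemma, $\Hom_{\cR_C}(K_L(M),S_{\tau(y)})=0$.

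It remains to pass from simples to general right- (respectively left-) bounded pointwise finite-dimensional modules in $\cn$. The right-boundedness and pointwise finite-dimensionality of $N'$ allow us to exhaust it by an increasing family of finite-dimensional $\cn$-submodules $N'_\alpha$, each of finite length whose simple subquotients are of the form $S_y$ with $y$ non-frozen (the only simples of $k(\Z Q)$). Iterating the long exact $\Ext$ sequence yields $\Ext^2_{\cR_C}(K_{LR}(M),N'_\alpha)=0$ for every $\alpha$. The main technical obstacle is commuting $\Ext^2$ with the resulting filtered colimit: since $M$ is finite-dimensional and $K_L$ is a left adjoint preserving finitely generated projectives, $K_L(M)$ is finitely generated and so is its quotient $K_{LR}(M)$; using the explicit resolutions of Theorems~\ref{thm:rc-resolutions} and~\ref{thm:res-simple-SC-modules} one sees that $K_{LR}(M)$ admits a projective presentation finite enough for $\Ext^1_{\cR_C}(K_{LR}(M),?)$ and $\Ext^2_{\cR_C}(K_{LR}(M),?)$ to commute with directed unions of submodules in $\cn$, delivering part (a). Part (b) is obtained by the dual argument, now exploiting the left-boundedness of $N''$ to filter by finite-dimensional quotients of $N''$ and applying the corresponding Serre duality formula for $\Ext^2(N'',K_{LR}(M))$.
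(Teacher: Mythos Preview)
Your reduction to the vanishing of $\Ext^2_{\cR_C}(K_{LR}(M),N')$ (respectively $\Ext^2_{\cR_C}(N'',K_{LR}(M))$) and the treatment of simple modules via the duality of Corollary~\ref{cor:RHom}(e) are exactly what the paper does. The small inaccuracy that the dimension shift from $\Ext^1(KK(M),N')$ to $\Ext^2(K_{LR}(M),N')$ is only an \emph{injection} (Lemma~\ref{lemma:characterization-image-KR-KL} gives $\Ext^1(K_L(M),N')=0$ but says nothing about $\Ext^2(K_L(M),N')$) is harmless, since an injection into zero is all you need.

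The substantive gap is in your passage from simples to general $N'$. You assert that a right-bounded, pointwise finite-dimensional module $N'\in\cn$ is the union of its finite-dimensional submodules, and then invoke commutation of $\Ext^2(K_{LR}(M),?)$ with filtered colimits. The commutation would indeed hold --- $K_{LR}(M)$ is finite-dimensional and $\cR_C$ has global dimension at most~$2$ by Lemma~\ref{lemma:resolutions}, so $K_{LR}(M)$ has a length-two resolution by finitely generated projectives --- but the exhaustion claim is false when $Q$ is not Dynkin. Take $N'=x^\wedge_{k(\Z Q)}$ for a non-frozen vertex $x$: this module is cyclic (generated by $\id_x$), right-bounded and pointwise finite-dimensional, yet \emph{infinite}-dimensional, since $k(\Z Q)(y,x)=\cd_Q(H(y),H(x))$ is non-zero for infinitely many $y$ (for instance, $\Ext^1_{kQ}(\tau^n I_i,P_j)\cong D\Hom_{kQ}(P_j,\tau^{n+1}I_i)\neq 0$ for all $n\geq 0$ when $Q$ is the Kronecker quiver). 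A cyclic infinite-dimensional module cannot be a union of finite-dimensional submodules. The references you give (Theorems~\ref{thm:rc-resolutions} and~\ref{thm:res-simple-SC-modules}) concern resolutions of $x^\wedge_\cd$ and of simple $\cs_C$-modules and do not help here.

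The paper handles this by going in the opposite direction: since $N'$ is right-bounded and pointwise finite-dimensional, one writes it as an \emph{inverse} limit of finite-dimensional \emph{quotients} $N'_p$ (obtained by killing the part supported far to the left). One then uses the finite projective resolution of $K_{LR}(M)$ to see that each $\Ext^1(K_{LR}(M),N'_p)$ is finite-dimensional, so that the $\lim^1$-term in the Milnor sequence vanishes by Mittag--Leffler, reducing $\Ext^2(K_{LR}(M),N')$ to $\lim_p\Ext^2(K_{LR}(M),N'_p)$. Each $N'_p$ has finite length, and now the d\'evissage to simples goes through.
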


\begin{proof} We prove a), the proof of b) being dual.
By Lemma~\ref{lemma:Hom-Ext-KLR}, on the subcategory $\cn$,
the functor $\Hom(KK(M), ?)$ is isomorphic to $\Ext^1(K_{LR}(M),?)$. We have
an exact sequence
\begin{align*}
(K_{LR}(M), N'') &\to \mbox{}^1(K_{LR}(M), N') \to \mbox{}^1(K_{LR}(M),N) 
\to \mbox{}^1(K_{LR},N'') \\
& \to \mbox{}^2(K_{LR}(M), N')\ko
\end{align*}
where we abbreviate $\Hom(,)$ by $(,)$ and $\Ext^p(,)$ by $\mbox{}^p(,)$.
Here the group
\[
\Hom(K_{LR}(M),N'')
\]
vanishes since $K_{LR}(M)$ is a quotient of
$K_L(M)$. The claim will follow once we show that $\Ext^2(K_{LR}(M),N')$
vanishes. Since $N'$ is right bounded and pointwise finite-dimensional,
it is the inverse limit of a countable system $N'_p$, $p\in \N$, of finite-dimensional
$\cR_C$-modules. We have an exact sequence
\[
0 \to {\lim}^1 \Ex^1(K_{LR}(M), N'_p) \to \Ex^2(K_{LR}(M), N') \to
\lim \Ex^2(K_{LR}(M), N'_p) \to 0 \ko
\]
where $\Ex^i$ denotes $\Ext^i$ and ${\lim}^1$ the first right derived 
functor of the inverse
limit functor $\lim$, cf.~Application 3.5.10 in \cite{Weibel94}.
Since $K_{LR}(M)$ is finite-dimensional, it admits a finite resolution by
finitely generated projective $\cR_C$-modules. Now $\Hom(P, N'_p)$ is
finite-dimensional for each finitely generated projective $P$ and each $N'_p$.
So the spaces $\Ext^1(K_{LR}(M), N'_p)$ are all finite-dimensional and
the ${\lim}^1$ term in the above sequence
vanishes by the Mittag-Leffler lemma. It remains
to be shown that each $\Ext^2(K_{LR}(M), N'_p)$ vanishes. For this, 
since $N'_p$ is of finite length, it
suffices to check that $\Ext^2(K_{LR}(M), S_x)$ vanishes for each
vertex $x$ of $\Z Q$. Indeed, by Lemma~\ref{cor:RHom}, we have
\[
\Ext^2(K_{LR}(M), S_x) = D \Hom(S_{\tau^{-1}(x)}, K_{LR}(M))
\]
and this last space vanishes because $K_{LR}(M)$ is a submodule
of $K_R(M)$.
\end{proof}

\begin{theorem} \label{thm:representability}
If $M$ is a finite-dimensional $\cs_C$-module, then $KK(M)$ is a finitely
generated projective $\cR_C$-module and $CK(M)$ a finitely cogenerated
injective $\cR_C$-module.
\end{theorem}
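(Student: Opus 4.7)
I focus on $KK(M)$; the claim for $CK(M)$ is dual, under the $k$-linear duality $D$ which intertwines $K_L$ with $K_R$. Since the canonical map $K_L(M) \to K_R(M)$ restricts to the identity on $\cs_C$, the module $KK(M)$ lies in the Serre subcategory $\cn \cong \Mod(k(\Z Q))$ of $\Mod(\cR_C)$. The plan splits into three parts: control of pointwise structure, projectivity, and finite generation.

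For the pointwise control, I apply the right exact functor $K_L$ to a finite projective presentation of $M$ in $\mod(\cs_C)$, obtaining a finite $\cR_C$-presentation of $K_L(M)$ whose terms are finite direct sums of representable modules $\sigma^{-1}(c)^{\wedge}_{\cR_C}$. By Assumption~\ref{as:directed-category}, each such representable is pointwise finite-dimensional and right bounded; these properties descend to $K_L(M)$ and, as a submodule, to $KK(M)$.

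Projectivity of $KK(M)$ (in $\cn$, equivalently as an $\cR_C$-module supported in $\cn$) then reduces to showing $\Ext^1_\cn(KK(M), S_x) = 0$ for every simple $S_x$ at $x \in \Z Q_0$, since a pointwise finite-dimensional right bounded $k(\Z Q)$-module has a minimal projective resolution whose $i$-th term has $x^{\wedge}$-multiplicity $\dim \Ext^i(KK(M), S_x)$. For any short exact sequence $0 \to S_x \to E \to KK(M) \to 0$ in $\cn$, the simple $S_x$ is pointwise finite-dimensional and right bounded, so Lemma~\ref{lemma:exactness}~a) lets the identity of $KK(M)$ lift to $E$, splitting the extension. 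Since $\cn$ is a Serre subcategory of $\Mod(\cR_C)$, $\Ext^1_\cn$ agrees with $\Ext^1_{\cR_C}$ on modules in $\cn$, providing the vanishing.

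For finite generation, the multiplicity of $x^{\wedge}$ in the projective cover of $KK(M)$ equals $\dim\Hom_{\cR_C}(KK(M), S_x)$; for $x \in \Z Q_0$ this is $\dim\Ext^1_{\cR_C}(K_{LR}(M), S_x)$ by Lemma~\ref{lemma:Hom-Ext-KLR}. The key observation is that $K_{LR}(M)$ has finite support: as a quotient of the right bounded $K_L(M)$ and a submodule of $K_R(M)$ (left bounded and pointwise finite-dimensional by the dual argument applied to an injective copresentation of $M$), it is bilaterally bounded, and directedness of $\cR_C$ forces such support to be finite. The complex of Corollary~\ref{cor:RHom}~c) then shows $\Ext^1(K_{LR}(M), S_x)$ is nonzero for only finitely many $x$. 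The main technical obstacle I anticipate lies in carefully calibrating the minimal-resolution characterization of projectivity together with the Serre-subcategory identification of $\Ext$ groups.
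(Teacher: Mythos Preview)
Your argument is correct and follows essentially the same route as the paper: both hinge on Lemma~\ref{lemma:exactness}~a) for projectivity (the paper applies it once to split the projective cover sequence of $KK(M)$ in $\cn$, you apply it to each extension by a simple $S_x$ and then invoke the minimal-resolution characterization) and on Lemma~\ref{lemma:Hom-Ext-KLR} together with the finite-dimensionality of $K_{LR}(M)$ for finite generation. One imprecision worth flagging: your parenthetical ``equivalently as an $\cR_C$-module supported in $\cn$'' is not quite right, since projectivity in $\cn=\Mod(k(\Z Q))$ is strictly weaker than $\cR_C$-projectivity (for instance $x^\wedge_\cd$ is projective in $\cn$ but has a nontrivial $\cR_C$-projective resolution by Theorem~\ref{thm:rc-resolutions}); what your argument actually establishes---and what is used downstream in the construction of $\Phi$---is projectivity in $\cn$.
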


\begin{proof} Since $M$ is a finite-dimensional $\cs_C$-module,
it is finitely generated. Thus the module $K_L(M)$ is also finitely
generated and hence right bounded and pointwise finite-dimensional.
These properties are inherited by the submodule $KK(M)$ of
$K_L(M)$. They imply that there is a surjection $P \to KK(M)$ where
$P$ is a direct sum of projectives $u_i^\wedge$, $i\in I$, such that
the family of the vertices $u_i$ is right bounded and each vertex
$x$ of $\Z \tilde{Q}$ occurs at most finitely many times among the
$u_i$, $i\in I$. It follows that $P$ is right bounded and pointwise
finite-dimensional and so is the kernel $M'$ of $P\to M$. Thus,
by Lemma~\ref{lemma:exactness}, the functor $\Hom(KK(M),?)$ takes
the sequence
\[
0 \to M' \to P \to KK(M) \to 0
\]
to an exact sequence. Thus, the morphism $P \to KK(M)$ splits
and $KK(M)$ is a direct factor of $P$. Let us show that $P$ is finitely
generated. First we notice that $K_{LR}(M)$ is finite-dimensional since
it is pointwise finite-dimensional and both right and left bounded. Therefore,
for each vertex $x$ of $\Z Q$, by Corollary~\ref{cor:RHom}, the space
\[
\Hom(KK(M), S_x) = \Ext^1(K_{LR}(M), S_x)
\]
is finite-dimensional and vanishes for all but finitely many vertices $x$.
Hence $KK(M)$ must be a finite sum of projectives $x^\wedge$.
The proof of the second
assertion is dual.
\end{proof}

\subsection{Description of the strata} \label{ss:description-of-the-strata}
Our goal in this section is to prove the description of the
strata of $\cm_0(w)$ given in Proposition~\ref{prop:description-strata}. 
We use the notations and assumptions of section~\ref{ss:construction-of-Phi}.
We define an arbitrary (not necessarily finite-dimensional) $\cR_C$-module $U$ to be
{\em stable} if we have $\Hom(N,U)=0$ for each module $N$ in the
kernel $\cn$ of the restriction functor $\res$. If $U$ is finite-dimensional,
it is stable iff $\Hom(S_x, U)=0$ for each vertex $x$ of $\Z Q$.
Dually, $U$ is {\em co-stable}
if $\Hom(U,N)=0$ for each $N$ in $\cn$. Clearly submodules of 
stable modules are stable and quotient modules of co-stable modules
are co-stable. Now let us fix an $\cs_C$-module $M$. 
By the adjunctions between $K_L$, $\res$ and $K_R$,
for each $\cs_C$-module $M$, the module $K_R(M)$ is stable and
$K_L(M)$ is co-stable. Since $K_{LR}(M)$ is a submodule of $K_R(M)$
and a quotient of $K_L(M)$, it is both stable and co-stable. This yields
the first statement of Proposition~\ref{prop:description-strata}. If we 
apply the functor $\res$ to the canonical morphism
\[
\can: K_L(M) \to K_R(M)
\]
we obtain the composition of the adjunction morphisms
\[
\res K_L(M) \liso M \iso \res K_R(M)
\]
and in particular the restriction $\res(\can)$ is invertible. Since the
restriction functor is exact, it also makes the canonical morphisms
\[
\xymatrix{K_L(M) \ar@{->>}[r] & K_{LR}(M)\,\, \ar@{>->}[r] & K_R(M) }
\]
invertible and so $\res(K_{LR}(M))$ is canonically isomorphic to $M$.
Now let us assume that $M$ is finite-dimensional of dimension vector $w$
and belongs to $\cm_0(w)$.
Then $K_{LR}(M)$ is finite-dimensional: Indeed, $K_L(M)$ is
right bounded, $K_R(M)$ is left bounded and both are pointwise finite-dimensional.
Let $(v,w)$ be the dimension vector of $K_{LR}(M)$. Since $\res(K_{LR}(M))$
is isomorphic to $M$, the second component of the dimension vector
of $K_{LR}(M)$ is indeed the dimension vector
$w$ of $M$. This also shows that the image under $\pi$ of the point $\tilde{M}$ of 
$\cm^{reg}(v,w)$ corresponding to $K_{LR}(M)$ equals $M$, which therefore 
does belong to the stratum $\pi(\cm^{reg}(v,w))$. Now the other assertions of 
Proposition~\ref{prop:description-strata} are immediate from the facts recalled 
in section~\ref{ss:stratification}.

\subsection{Intermediate extensions and closed orbits} 
\label{ss:intermediate-extensions-closed-orbits} Our goal in this
section is to prove Proposition~\ref{prop:intermediate-extension-closed-orbit}.
We use the notations and assumptions of section~\ref{ss:construction-of-Phi}.
If $M$ is a finite-dimensional $\cR$-module of dimension vector $(v,w)$,
the {\em $G_v$-orbit of $M$} is the orbit corresponding to $M$ in
the affine variety $\rep(\cR^{op}, v,w)$ of representations of $\cR^{op}$
with dimension vector $(v,w)$. By abuse of language, we say that 
a $G_v$-stable subset of $\rep(\cR^{op}, v,w)$ {\em contains} a module,
if it contains the orbit corresponding to the module.

\begin{lemma} \label{lemma:Gv-orbit-closure}
Let 
\[
\xymatrix{ 
0 \ar[r] & L \ar[r] & M \ar[r] & N \ar[r] & 0
}
\]
be an exact sequence of finite-dimensional $\cR_C$-modules.
If we have $\res(L)=0$ (resp. $\res(N)=0$), then the closure of the 
$G_v$-orbit of $M$ contains $L_{ss}\oplus N$ (resp. $L\oplus N_{ss}$), 
where $L_{ss}$ is the semi-simple module with
the same dimension vector as $L$. 
\end{lemma}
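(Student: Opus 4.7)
The plan is to prove the first assertion (the second being entirely symmetric) by chaining two standard degenerations inside the $G_v$-orbit of $M$: first $M$ degenerates to $L\oplus N$, and then $L\oplus N$ degenerates further to $L_{ss}\oplus N$, each step realized by an explicit one-parameter subgroup of $G_v$. The hypothesis $\res(L)=0$ is used only to ensure that these cocharacters, which scale subspaces of the $L$-factor while leaving $N$ untouched, really do lie in $G_v$, i.e.\ act trivially at the frozen vertices.

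For the first step, I fix a vector-space splitting $M(x) = L(x)\oplus N(x)$ at each vertex $x$ of $\Z\tilde{Q}$; for every arrow $\alpha$ the operator $M(\alpha)$ then takes the block form $\bigl(\begin{smallmatrix} L(\alpha) & h(\alpha) \\ 0 & N(\alpha) \end{smallmatrix}\bigr)$, with $h(\alpha)$ recording the extension class. The cocharacter $\lambda(t)$ acting by $t$ on each non-frozen $L(x)$, by $1$ on each non-frozen $N(x)$, and trivially on every frozen vertex is a bona fide element of $G_v$ precisely because $\res(L)=0$ forces $L$ to vanish at the frozen vertices; it conjugates $M(\alpha)$ into $\bigl(\begin{smallmatrix} L(\alpha) & t\,h(\alpha) \\ 0 & N(\alpha) \end{smallmatrix}\bigr)$, so letting $t\to 0$ yields $L\oplus N \in \overline{G_v\cdot M}$.

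For the second step, I choose a composition series $0=L_0\subset L_1\subset\cdots\subset L_n=L$ with simple subquotients $L_i/L_{i-1}$ and, at each non-frozen vertex $x$, a basis of $L(x)$ adapted to the induced filtration, so that every $L(\alpha)$ is block lower triangular with respect to the graded pieces $(L_i/L_{i-1})(x)$. The cocharacter $\mu(t)\in G_v$ acting on the $i$-th graded piece by $t^{-i}$ while leaving $N$ and all frozen vertices untouched scales the strictly off-diagonal blocks of each $L(\alpha)$ by strictly positive powers of $t$, so its $t\to 0$ limit is exactly the associated graded $L_{ss}\oplus N$. Composing the two degenerations gives $L_{ss}\oplus N \in \overline{G_v\cdot M}$, as required.

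The second assertion is obtained by the symmetric manoeuvre: one fixes the same kind of splitting, scales the $N$-factor instead of the $L$-factor, and uses a composition series of $N$; the hypothesis $\res(N)=0$ plays exactly the analogous role of keeping the cocharacters inside $G_v$. The only genuinely delicate point throughout is this containment of the two cocharacters in $G_v$, and it is precisely what the hypothesis on $\res$ is designed to supply; once that is in place, both degenerations are the standard Jordan–Hölder type degenerations for quiver representations.
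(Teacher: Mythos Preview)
Your proof is correct and follows essentially the same approach as the paper's: the same one-parameter subgroup $t\oplus\id$ on the splitting $L\oplus N$ for the first degeneration, followed by the standard degeneration of $L$ to its semisimplification. The paper handles the second step by simply invoking that $L$, being a nilpotent $k(\Z Q)$-module, has $L_{ss}$ in its orbit closure, whereas you spell this out via an explicit cocharacter adapted to a composition series; apart from this level of detail (and a harmless sign/triangularity convention in your description of $\mu(t)$), the arguments coincide.
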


\begin{proof} For each vertex $x$ of $\Z Q$, we choose an isomorphism
$M_x = L_x \oplus N_x$ which provides a splitting of the sequence
\[
\xymatrix{ 
0 \ar[r] & L_x \ar[r] & M_x \ar[r] & N_x \ar[r] & 0.
}
\]
For an invertible scalar $t$, let $g(t)$ be the element of $G_v$ which
acts by $t \oplus \id$ on $L_x \oplus N_x$. When $t$ tends to zero,
the representation $g(t).x$, where $x$ is a point in the orbit of $M$, 
tends to $L\oplus N$. Since $L$ is a nilpotent
representation of $k(\Z Q)$, its $G_v$-orbit contains $L_{ss}$ in its
closure. Thus, the $G_v$ orbit of $M$ contains $L_{ss}\oplus N$
in its closure. The proof of the second statement is analogous.
\end{proof}

Let us now prove Proposition~\ref{prop:intermediate-extension-closed-orbit}.
Since $L$ is stable, we have an exact sequence
\[
\xymatrix{
0 \ar[r] & K_{LR}(\res L) \ar[r] & L \ar[r] & N \ar[r] & 0,
}
\]
where $\res(N)=0$. By the Lemma, the closure of the $G_v$-orbit of 
$L$ contains $K_{LR}(\res L)\oplus N_{ss}$. Now let $U$ be a module
in the unique closed $G_v$-orbit in the closure of the $G_v$-orbit of $L$
(the existence of this unique orbit is guaranteed by geometric invariant theory).
Let us show that the orbit of $U$ contains $K_{LR}(\res L) \oplus N_{ss}$.
We have $\res(U)=\res(L)$ since the restriction to $\cs_C$ of a module
is constant on the closure of its $G_v$-orbit. Thus, the morphisms
\[
K_L(\res(M)) \to K_R(\res(M)) \mbox{ and }
K_L(\res(U)) \to K_R(\res(U)) 
\]
are equal and so $K_L(\res(M)) \to K_R(\res(M))$ factors through
the adjunction morphism $\eps : U \to K_R(\res U)$. Therefore, the
module $K_{LR}(\res M)$ is contained in $\im(\eps) \subset K_R(\res U)$.
Let $i$ denote the inclusion $K_{LR}(\res M) \subset \im(\eps)$.
Now by the lemma, the closure of the orbit of $U$ contains
$\im(\eps)\oplus (\ker(\eps))_{ss}$ and the closure of the orbit
of $\im(\eps)$ contains $K_{LR}(\res M)\oplus (\cok(i))_{ss}$.
Thus the orbit of $U$, which equals its closure, contains the object
\[
K_{LR}(\res M)\oplus (\cok(i))_{ss} \oplus (\ker(\eps))_{ss}.
\]
This shows that $U$ is isomorphic to $K_{LR}(\res M)\oplus N_{ss}$,
as claimed.

\subsection{Characterization of the strata} \label{ss:characterization-of-the-strata}
Our goal in this section is to prove the characterization of the strata
of $\cm_0(w)$ given in Theorem~\ref{thm:strata}. 
We use the notations and assumptions of section~\ref{ss:construction-of-Phi}.
We need the following lemmas.
For a vector $v: \Z Q_0 \to \Z$, we define $C_q v : \Z Q_0 \to \Z$ by
\[
(C_q v)(x) = v(x) - \big(\sum_{y\to x} v(y)\big) + v(\tau(x))\ko x\in \Z Q_0 \ko
\]
where the sum ranges over all arrows $y\to x$ of $\Z Q$. The index $q$
reminds us that $C_q$ is a `quantum Cartan matrix', cf. section~3.1 of
\cite{Nakajima11}. Notice that the linear map $v \mapsto C_q v$ is injective on 
the space of finitely-supported vectors. 

\begin{lemma}  \label{lemma:dim-Ext1}
Let $U$ be a finite-dimensional $\cR_C$-module of dimension vector $(v,w)$.
If $U$ is stable and co-stable, the vector
\[
x \mapsto \dim \Ext^1(S_x, U) \ko x\in \Z Q_0 \ko
\]
equals $w\sigma - C_q v$. 
\end{lemma}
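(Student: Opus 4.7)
\medskip

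The plan is to compute $\Ext^\ast(S_x,U)$ via the explicit complex provided by Corollary~\ref{cor:RHom}(d), and then exploit (co-)stability to kill the degree $0$ and degree $2$ cohomology so that the lemma reduces to an Euler characteristic calculation.

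More concretely, by Corollary~\ref{cor:RHom}(d), the complex
\[
U(x) \to \bigoplus_{y\to x} U(y) \to U(\tau(x))
\]
(with the first term in degree $0$, and the sum taken over arrows of $\Z\tilde Q$ with target $x$) computes $\RHom(S_x,U)$. Since $x$ is non frozen, $S_x$ lies in the kernel $\cn$ of $\res$, so stability of $U$ gives $\Hom(S_x,U)=0$, i.e.~the degree $0$ cohomology vanishes. For the degree $2$ cohomology, I would apply the duality of Corollary~\ref{cor:RHom}(e), which yields
\[
D\Ext^2(S_x,U) \;=\; \Hom(U,S_{\tau(x)}).
\]
The vertex $\tau(x)$ is again non frozen, so $S_{\tau(x)}\in\cn$, and co-stability of $U$ forces this Hom space to vanish. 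Therefore $\Ext^2(S_x,U)=0$.

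With the two outer cohomologies dead, the alternating sum of the ranks of the complex coincides (up to a sign) with $\dim\Ext^1(S_x,U)$:
\[
-\dim\Ext^1(S_x,U) \;=\; \dim U(x) \;-\; \sum_{y\to x}\dim U(y) \;+\; \dim U(\tau(x)),
\]
where the sum still ranges over arrows of $\Z\tilde Q$ incident to $x$. The final step is to separate the contribution of framing arrows from the contribution of arrows of $\Z Q$. Among the arrows of $\Z\tilde Q$ with target a non frozen vertex $x=(i,p)$, there is exactly one coming from a frozen vertex, namely the arrow $\sigma(x)=(i',p-1)\to x$ arising from the framing arrow $i\to i'$ of $\tilde Q$; all others come from non frozen vertices, i.e.~from $\Z Q$. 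Since $U$ has dimension vector $v$ on non frozen vertices and $w$ on frozen vertices, splitting the sum yields
\[
-\dim\Ext^1(S_x,U) \;=\; v(x) - \sum_{\substack{y\to x\\ y\in\Z Q_0}} v(y) + v(\tau(x)) \;-\; w(\sigma(x)) \;=\; (C_q v)(x) - (w\sigma)(x),
\]
which is precisely the asserted equality $\dim\Ext^1(S_x,U)=w\sigma-C_q v$.

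The only real subtlety is keeping track of which arrows in $\Z\tilde Q$ contribute to the middle term of the defining complex; once that bookkeeping is done, the argument is just the Euler-characteristic computation, and no further obstacle is expected.
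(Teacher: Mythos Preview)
Your argument is correct and follows the same route as the paper: compute $\RHom(S_x,U)$ via the three-term complex of Corollary~\ref{cor:RHom}(d), kill the outer cohomologies using stability and co-stability, and read off $\dim\Ext^1$ as the negative Euler characteristic. You are in fact slightly more explicit than the paper, which simply asserts that co-stability forces the degree~$2$ cohomology to vanish, whereas you justify this via the duality of part~(e); and you spell out the separation of the single frozen arrow $\sigma(x)\to x$ from the $\Z Q$-arrows, which the paper leaves implicit in the final equality.
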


\begin{proof} By part~d) of Corollary~\ref{cor:RHom}, the space
$\Ext^1(S_x,U)$ is the homology in degree~$1$ of the complex
\[
0 \to U(x) \to \bigoplus_{y\to x} U(y) \to U(\tau(x)) \to 0 \ko
\]
where the sum ranges over all arrows $y \to x$ of $\cR_C$. 
Since $U$ is stable and co-stable, the homologies in degree $0$
and $2$ of this complex vanish. Thus, the dimension of the
homology in degree $1$ equals
\[
-\dim U(x) + \big(\sum_{y\to x} \dim U(y) \big) - \dim U(\tau(x)) = -(C_q v)(x) + w(\sigma(x)).
\]
\end{proof}

\begin{lemma} \label{lemma:decomp-PhiM}
Let $M$ be an $\cs_C$-module. Let $(v,w)$ be the dimension
vector of $K_{LR}(M)$. Then, for each vertex $x$ of $\Z Q$, the multiplicity
of the indecomposable $H(x)$ in $\Phi(M)$ equals
\[
dim \Ext^1(S_x, K_{LR}(M)) = (w\sigma -C_q v)(x).
\]
\end{lemma}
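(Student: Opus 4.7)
The plan is to compute the multiplicity $m_x$ of $H(x)$ in $\Phi(M)$ in two steps: first identify it with $\dim \Hom_{\cR_C}(S_x, CK(M))$ via the defining property of $\Phi$, and then identify the latter with $\dim \Ext^1(S_x, K_{LR}(M))$ via Lemma~\ref{lemma:Hom-Ext-KLR}. The second asserted equality will then follow from Lemma~\ref{lemma:dim-Ext1} applied to $U = K_{LR}(M)$.

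Concretely, since $M$ is finite-dimensional, so are $K_{LR}(M)$ and $CK(M)$ (cf.~section~\ref{ss:description-of-the-strata} and Theorem~\ref{thm:representability}). By the construction of $\Phi$ in section~\ref{ss:construction-of-Phi}, the object $\Phi(M)$ lies in the image of Happel's embedding (in the non-Dynkin case this is the category $\cv$; in the Dynkin case it is all of $\cd_Q$ by Theorem~\ref{thm:Happel}), so it decomposes as a finite direct sum $\Phi(M) = \bigoplus_y H(y)^{m_y}$ in the Krull--Schmidt category $\cd_Q$. Substituting this decomposition into the defining relation $CK(M)(x) = D\Hom_{\cd_Q}(\Phi(M), H(x))$ and using that $H$ is fully faithful (so that $\cd_Q(H(y), H(x)) = k(\Z Q)(y,x)$) yields a natural-in-$x$ isomorphism of $k(\Z Q)$-modules
\[
CK(M) \;\cong\; \bigoplus_y (y^\vee_\cd)^{m_y}.
\]

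Both $S_x$ and $CK(M)$ vanish on the frozen vertices, hence descend to $k(\Z Q)$-modules, and $y^\vee_\cd$ is precisely the cofree $k(\Z Q)$-module at $y$. The Yoneda-type identity~(\ref{eq:Yoneda}) therefore gives $\Hom_{k(\Z Q)}(S_x, y^\vee_\cd) = D S_x(y) = \delta_{x,y}\,k$, and combining this with the above decomposition I obtain $\dim \Hom_{\cR_C}(S_x, CK(M)) = m_x$. Since $S_x$ lies in the subcategory $\cn$, Lemma~\ref{lemma:Hom-Ext-KLR} provides a canonical isomorphism $\Hom(S_x, CK(M)) \iso \Ext^1(S_x, K_{LR}(M))$, whence $m_x = \dim \Ext^1(S_x, K_{LR}(M))$; this is the first claimed equality.

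For the second equality, I would invoke Lemma~\ref{lemma:dim-Ext1} with $U = K_{LR}(M)$. As recalled in section~\ref{ss:description-of-the-strata}, $K_{LR}(M)$ is finite-dimensional, both stable and co-stable, and of dimension vector $(v,w)$, so the lemma directly yields $\dim \Ext^1(S_x, K_{LR}(M)) = (w\sigma - C_q v)(x)$. No step here is genuinely delicate; the only thing warranting care is the upgrade of the pointwise identity defining $\Phi$ to a natural isomorphism of $k(\Z Q)$-modules, but this is built into the functoriality of the representability established in Theorem~\ref{thm:representability}.
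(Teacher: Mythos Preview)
Your proof is correct and follows essentially the same route as the paper's: identify the multiplicity of $H(x)$ in $\Phi(M)$ with $\dim \Hom(S_x, CK(M))$ via the injective decomposition of $CK(M)$, then invoke Lemma~\ref{lemma:Hom-Ext-KLR} and Lemma~\ref{lemma:dim-Ext1}. One small imprecision: $CK(M)$ need not be finite-dimensional in the non-Dynkin case (Theorem~\ref{thm:representability} only gives that it is finitely cogenerated injective), but you never actually use this, so the argument is unaffected.
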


\begin{proof} Recall from section~\ref{ss:construction-of-Phi} that
$CK(M)$ isomorphic to the finitely cogenerated injective module
\[
D\Hom(\Phi(M), H(?)).
\]
The multiplicity of an indecomposable $H(x)$ in $\Phi(M)$ equals
the multiplicity of the injective indecomposable
\[
D\Hom(H(x), H(?)) = D k(\Z Q)(x, ?)
\]
in $CK(M)$. This multiplicity equals the multiplicity of the simple $S_x$
in the socle of $CK(M)$, that is to say the dimension of $\Hom(S_x, CK(M))$.
Now by Lemma~\ref{lemma:Hom-Ext-KLR}, we have the isomorphism
\[
\Hom(S_x, CK(M)) = \Ext^1(S_x, K_{LR}(M)).
\]
By Lemma~\ref{lemma:dim-Ext1}, the dimension of the right hand
side equals $(w\sigma - C_q v)(x)$.
\end{proof}

Theorem~\ref{thm:strata} is now an easy consequence: 
Let $w$ be a dimension vector for $\cs_C$ (i.e. $w$ vanishes on the vertices
not belonging to $C$). Proposition~\ref{prop:description-strata} shows
that two $\cs_C$-modules $M_1$ and $M_2$ belong to the same
stratum of $\cm_0(w)$ iff the $\cR_C$-modules $K_{LR}(M_1)$ and
$K_{LR}(M_2)$ have the same dimension vector $(v,w)$ and
in this case, the objects $\Phi(M_1)$ and $\Phi(M_2)$ are isomorphic,
by Lemma~\ref{lemma:decomp-PhiM}. Conversely, if $\Phi(M_1)$ and
$\Phi(M_2)$ are isomorphic, then by the same Lemma, we have
\[
w\sigma - C_q v_1 = w\sigma - C_q v_2 \ko
\]
where $(w,v_i)$ is the dimension vector of $K_{LR}(M_i)$, $i=1,2$. Since
$C_q$ is injective on the space of dimension vectors, we find that
$K_{LR}(M_1)$ and $K_{LR}(M_2)$ have the same dimension vector,
which implies that $M_1$ and $M_2$ lie in the same stratum,
by Proposition~\ref{prop:description-strata}.

\subsection{Resolution of the intermediate extension}
\label{ss:resolution-of-the-intermediate-extension}
For future reference, we record the following lemma:
\begin{lemma} \label{lemma:resolution-of-the-intermediate-extension}
Let $M$ be a finite-dimensional $\cs_C$-module.
\begin{itemize}
\item[a)] The $\cR_C$-module $K_{LR}(M)$
has a minimal injective resolution with finitely cogenerated terms
\[
\xymatrix{ 0 \ar[r] & K_{LR}(M) \ar[r] & I^0 \ar[r] & I^1 \ar[r] & 0 \ko}
\]
where $I^0$ is the direct sum of the modules $\sigma(x)^\vee$
with multiplicity equal to $\dim \Hom(S_{\sigma(x)}, M)$, $x\in \Z Q_0$,
and $I^1$ contains the summand $x^\vee$ with multiplicity
equal to the multiplicity of $H(x)$ as a direct factor of $\Phi(M)$, $x\in\Z Q_0$.
\item[b)] The $\cR_C$-module $K_{LR}(M)$
has a minimal projective resolution with finitely generated terms
\[
\xymatrix{ 0 \ar[r] & P_1 \ar[r] & P_0 \ar[r] & K_{LR}(M) \ar[r] & 0 \ko}
\]
where $P_0$ is the direct sum of the modules $\sigma(x)^\wedge$
with multiplicity equal to 
$\dim \Hom(M, S_{\sigma(x)})$, $x\in \Z Q_0$,
and $P_1$ contains the summand $x^\wedge$ with
multiplicity equal to the multiplicity of $H(\tau^{-1}(x))$ as a
direct factor of $\Phi(x)$.
\end{itemize}
\end{lemma}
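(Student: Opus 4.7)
The plan is to build both minimal resolutions term by term, reading each term from the dimensions of Ext-groups between the simples $S_u$ and $K_{LR}(M)$. Since $K_{LR}(M)$ is finite-dimensional (cf.\ section~\ref{ss:description-of-the-strata}), its socle and top are finite-dimensional, so the minimal injective envelope $I^0$ and projective cover $P_0$ are automatically finitely cogenerated and finitely generated respectively. The $p$-th terms satisfy: the multiplicity of $u^\vee$ in $I^p$ equals $\dim \Ext^p_{\cR_C}(S_u, K_{LR}(M))$, while the multiplicity of $u^\wedge$ in $P_p$ equals $\dim \Ext^p_{\cR_C}(K_{LR}(M), S_u)$. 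The (co-)resolutions of the simples in Lemma~\ref{lemma:resolutions} have finite length and finitely generated/cogenerated terms, so all these Ext spaces are finite-dimensional and vanish for all but finitely many $u$ thanks to the finite-dimensionality of $K_{LR}(M)$; hence each term in both resolutions will indeed be finitely cogenerated (resp.\ generated). The problem thus reduces to computing these Ext groups.

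For part (a), the stability of $K_{LR}(M)$ kills $\Hom(S_x, K_{LR}(M))$ for every non-frozen $x$, while the identification $\res K_{LR}(M)=M$ from section~\ref{ss:description-of-the-strata} yields $\Hom(S_{\sigma(x)}, K_{LR}(M)) = \Hom(S_{\sigma(x)}, M)$ for every frozen $\sigma(x)$; this produces $I^0$ as claimed. The non-frozen contribution to $I^1$ is precisely the content of Lemma~\ref{lemma:decomp-PhiM}, which identifies $\dim \Ext^1_{\cR_C}(S_x, K_{LR}(M))$ with the multiplicity of $H(x)$ in $\Phi(M)$. It remains to show that $\Ext^p_{\cR_C}(S_u, K_{LR}(M))=0$ for every $u$ and $p\geq 2$: for frozen $u=\sigma(x)$ this is immediate from the length-one projective resolution of Lemma~\ref{lemma:resolutions}(a), while for non-frozen $u=x$ Corollary~\ref{cor:RHom}(d) handles $p\geq 3$ directly and the Serre-type duality~(\ref{eq:duality}) of Corollary~\ref{cor:RHom}(e) identifies $\Ext^2_{\cR_C}(S_x, K_{LR}(M))$ with $D\Hom_{\cR_C}(K_{LR}(M), S_{\tau(x)})$, which vanishes by co-stability.

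Part (b) proceeds dually. The top of $K_{LR}(M)$ is determined by co-stability (annihilating $\Hom(K_{LR}(M), S_x)$ for non-frozen $x$) together with $\res K_{LR}(M)=M$, yielding $P_0$. For the non-frozen part of $P_1$, I apply $\Hom(-,S_x)$ to the short exact sequence $0\to KK(M)\to K_L(M)\to K_{LR}(M)\to 0$ and use Lemma~\ref{lemma:characterization-image-KR-KL} to kill $\Hom(K_L(M),S_x)$ and $\Ext^1(K_L(M),S_x)$ (since $S_x\in\cn$), obtaining $\Ext^1_{\cR_C}(K_{LR}(M), S_x) \cong \Hom(KK(M), S_x)$; then Proposition~\ref{prop:description-KK-CK} identifies $KK(M)$ with the projective $k(\Z Q)$-module $\Hom_{\cd_Q}(H(?), \tau\Phi(M))$, which, writing $\Phi(M)=\bigoplus H(x_i)^{n_i}$, is the direct sum of the indecomposable projectives $(\tau x_i)^\wedge$ with multiplicities $n_i$; hence its top has $S_x$-multiplicity equal to the multiplicity of $H(\tau^{-1}(x))$ in $\Phi(M)$. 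Vanishing of $\Ext^p_{\cR_C}(K_{LR}(M), S_u)$ for $p\geq 2$ follows symmetrically: from the length-one injective coresolution of Lemma~\ref{lemma:resolutions}(b) for frozen $u$, and for non-frozen $u=x$ from the duality~(\ref{eq:duality}) rewritten as $\Ext^2_{\cR_C}(K_{LR}(M), S_x)\cong D\Hom_{\cR_C}(S_{\tau^{-1}(x)}, K_{LR}(M))$, which vanishes by stability.

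The main technical point I anticipate is the $\Ext^2$-vanishing step for non-frozen simples: the complexes computing $\RHom(S_x,-)$ and $\RHom(-,S_x)$ in Corollary~\ref{cor:RHom}(c)--(d) have length two, so the vanishing cannot be read off directly and essentially requires the Serre-type duality~(\ref{eq:duality}) to convert a second extension into a $\Hom$ that is then killed by the appropriate (co-)stability condition on $K_{LR}(M)$. Once this is in hand, the rest is bookkeeping via the explicit descriptions of $CK(M)$ and $KK(M)$ furnished by Proposition~\ref{prop:description-KK-CK}.
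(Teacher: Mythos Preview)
Your argument is correct and follows the same overall plan as the paper—reading off the terms of the minimal resolution from $\Ext^p_{\cR_C}(S_u,K_{LR}(M))$ respectively $\Ext^p_{\cR_C}(K_{LR}(M),S_u)$—but it differs from the paper on two points, one a small imprecision and one a genuine alternative route.

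First, the imprecision: the identity $\res K_{LR}(M)=M$ alone does not yield $\Hom_{\cR_C}(S_{\sigma(x)},K_{LR}(M))=\Hom_{\cs_C}(S_{\sigma(x)},M)$; for a general $\cR_C$-module $N$ one only has an inclusion $\Hom_{\cR_C}(S_{\sigma(x)},N)\subset\Hom_{\cs_C}(S_{\sigma(x)},\res N)$. The paper closes this gap by noting that $CK(M)\in\cn$, so $\Hom(S_{\sigma(x)},CK(M))=0$, whence $\Hom(S_{\sigma(x)},K_{LR}(M))=\Hom(S_{\sigma(x)},K_R(M))$, and then applying the $K_R$-adjunction. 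The dual remark applies to your computation of $P_0$, using $KK(M)\in\cn$ and the $K_L$-adjunction.

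Second, for the non-frozen part of $P_1$ you pass through the sequence $0\to KK(M)\to K_L(M)\to K_{LR}(M)\to 0$ together with the explicit description of $KK(M)$ in Proposition~\ref{prop:description-KK-CK}. The paper instead invokes the duality isomorphism~(\ref{eq:duality}) to rewrite $\Ext^1_{\cR_C}(K_{LR}(M),S_x)$ as $D\Ext^1_{\cR_C}(S_{\tau^{-1}(x)},K_{LR}(M))$ and then quotes Lemma~\ref{lemma:decomp-PhiM}, keeping part~(b) as a formal dual of part~(a). Both routes are valid; yours is more direct but relies on the full strength of Proposition~\ref{prop:description-KK-CK}, while the paper's keeps the two halves symmetric. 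Finally, you explicitly verify the vanishing of $\Ext^{\geq 2}$ that pins down the length of the resolutions; the paper leaves this step to the reader.
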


\begin{remark} One can show that if the projective $\cs_C$-modules
coincide with the injective ones, then $I^1$ contains no direct factor
$\sigma(x)^\vee$ and $P_1$ no direct factor $\sigma(x)^\wedge$.
\end{remark}

\begin{proof} a) The module $K_{LR}(M)$ is
finite-dimensional (cf.~section~\ref{ss:description-of-the-strata}) and
thus admits an injective resolution with finitely cogenerated terms $I^p$
and the multiplicity of the indecomposable injective $u^\vee$ associated
with a vertex $u$ of $\Z \tilde{Q}$ equals the dimension of
$\Ext^p_{\cR_C}(S_u, K_{LR}(M))$. Let $x$ be a vertex of $\Z Q$. We have
$\Hom(S_x,K_{LR}(M))=0$ since $K_{LR}(M)$ is stable. 
Since we have $\Hom(S_{\sigma(x)}, CK(M))=0$, we find
\[
\Hom(S_{\sigma(x)}, K_{LR}(M)) = \Hom(S_{\sigma(x)}, K_R(M)) = 
\Hom_{\cs_C}(S_{\sigma(x)}, M).
\]
The multiplicity of $x^\vee$ in $I^1$ equals $\Ext^1_{\cR_C}(S_x, K_{LR}(M))$
and this equals the multiplicity of $H(x)$ in $\Phi(M)$ by 
Lemma~\ref{lemma:decomp-PhiM}. The proof of b) is similar but
uses the duality isomorphism~(\ref{eq:duality}) in addition.
\end{proof}

\subsection{On the degeneration order} \label{ss:degeneration-order}
Our goal in this section is to prove Theorem~\ref{thm:degeneration-order}.
We may and will assume that $Q$ is connected.
Let $(v,w)$ and
$(v',w)$ be dimension vectors of $\cR_C$ associated with
non empty subsets $\cm^{reg}(v,w)$ and $\cm^{reg}(v',w)$
of the corresponding smooth quiver varieties.
Let $M$ and $M'$ be $\cs_C$-modules belonging to the corresponding
strata. Let us assume that $\Phi(M) \leq \Phi(M')$ in the
degeneration order of \cite{JensenSuZimmermann05a}
and show that the stratum $\pi(\cm^{reg}(v',w))$ is contained in the 
closure of $\pi(\cm^{reg}(v,w))$. Recall from Corollary~4.1.3.14
of \cite{Qin12} that this is the case iff we have $v'(x)\leq v(x)$
for all vertices $x$ of $\Z Q_0$. Now by Proposition~\ref{prop:description-strata},
if we denote by $\dimv U$ the dimension vector of a module $U$, 
we have
\[
(v,w)=\dimv K_{LR}(M) \mbox{ and } (v',w) = \dimv K_{LR}(M').
\]
So we need to show the inequality
\[
\dimv K_{LR}(M') \leq \dimv K_{LR}(M).
\]
Indeed, by definition \cite{JensenSuZimmermann05a}, the relation $\Phi(M)\leq \Phi(M')$
means that there is an object $Z$ of $\cd_Q$ and a triangle
\begin{equation} \label{eq:degen-triangle}
\Phi(M') \to \Phi(M) \oplus Z \to Z \to \Sigma \Phi(M').
\end{equation}
If $Q$ is a Dynkin quiver, then 
thanks to the triangle equivalence of Theorem~\ref{thm:equivalence-with-DQ}, we
can find a finite-dimensional $\cs_C$-module $U$ such that
$\Phi(U)$ is isomorphic to $Z$. If $Q$ is not a Dynkin quiver, then
a priori, the object $Z$ may not belong to the image of $\Phi$ but
we claim that we can always replace it with an object in the image.
For this, let $H^i_\ca$ denote the homology functors for the heart $\ca$ of $\cd_Q$
whose indecomposable objects are the indecomposable regular $kQ$-modules
and all the objects $\tau^p P$, where $p\in\Z$ and $P$ is an
indecomposable projective $kQ$-module. Then $\Phi(M)$ and 
$\Phi(M')$ lie in $\ca$ but $Z$ may have non vanishing homologies
in several degrees. However, if we apply $H^{-1}_\ca$ to the 
triangle~(\ref{eq:degen-triangle}), we find the exact sequence
\[
0 \to H^{-1}_\ca(Z) \to H^{-1}_\ca(Z) \to \Phi(M') \to \Phi(M)\oplus H^0_\ca(Z).
\]
Since $H^{-1}_\ca(Z)$ is finite-dimensional, the second morphism of
the sequence must be invertible and so the map $\Phi(M') \to \Phi(M)\oplus H^0_\ca(Z)$
is injective. Thus, the following sequence is left exact
\begin{equation} \label{eq:degen-ex-seq}
0 \to \Phi(M') \to \Phi(M) \oplus H^0_\ca(Z) \to H^0_\ca(Z) \to 0.
\end{equation}
It is also right exact because $H^1_\ca(\Phi(M'))$ vanishes. The category
$\ca$ contains the subcategory of all regular $kQ$-modules as a
torsion subcategory and the corresponding category of torsion-free
objects is the category $\cv$ 
formed by the direct sums of the objects $\tau^p(P)$, where
$p\in\Z$ and $P$ is an indecomposable projective $kQ$-module. Since
$\Phi(M')$ is torsion-free, the map 
\[
\Phi(M) \oplus H^0_\ca(Z) \to H^0_\ca(Z)
\]
induces an isomorphism in the torsion parts. Thus, if we apply
the functor $A \to A_{tf}$ (which takes an object to its torsion-free quotient) to the
exact sequence~(\ref{eq:degen-ex-seq}), we obtain an exact sequence
\[
0 \to \Phi(M') \to \Phi(M) \oplus H^0_\ca(Z)_{tf} \to H^0_\ca(Z)_{tf} \to 0.
\]
So after replacing $Z$ with $H^0_\ca(Z)_{tf}$, we have a short exact
sequence of objects in $\cv$:
\begin{equation} \label{eq:degen-ses}
0 \to \Phi(M') \to \Phi(M) \oplus Z \to Z \to 0.
\end{equation}
By the first part of Theorem~\ref{thm:strata}, each object of $\cv$
is isomorphic to the image under $\Phi$ of a finite-dimensional
semi-simple $\cs_C$-module. So we can find a finite-dimensional semi-simple
$\cs_C$-module $U$ such that $\Phi(U)$ is isomorphic to $Z$.

From now on, $Q$ may be Dynkin or non Dynkin. By the surjectivity
at the level of extensions stated in Lemma~\ref{lemma:surjectivity-Ext1},
respectively in Corollary~\ref{cor:surjectivity-ext1-non-Dynkin},
we can lift the triangle~(\ref{eq:degen-triangle}),
respectively the short exact sequence~(\ref{eq:degen-ses}),
to a short exact sequence of $\cs_C$-modules
\[
0 \to M' \to E \to U \to 0.
\]
Since $K_L$ is right exact and $K_R$ is left exact, the image
\[
0 \to K_{LR}(M') \to K_{LR}(E) \to K_{LR}(U) \to 0
\]
of this sequence is exact at the terms $K_{LR}(M')$ and $K_{LR}(U)$ but
cannot be expected to be exact at $K_{LR}(E)$. Thus, we find the inequality
\[
\dimv(K_{LR}(M')) + \dimv(K_{LR}(U)) \leq \dimv(K_{LR}(E)).
\]
Now we also know that we have
\[
\dimv E = \dimv M' + \dimv U = \dimv M + \dimv U = \dimv(M\oplus U)
\]
and that $\Phi(E)$ is isomorphic to $\Phi(M) \oplus Z = \Phi(M) \oplus \Phi(U) =
\Phi(M\oplus U)$. By Lemma~\ref{lemma:decomp-PhiM} and the 
injectivity of the map $v \mapsto C_q v$, we conclude that we have
\[
\dimv(K_{LR}(E)) = \dimv(K_{LR}(M\oplus U)).
\]
Thus, we obtain the inequality
\[
\dimv K_{LR}(M) \geq \dimv K_{LR}(M')
\]
as claimed.

Conversely, suppose that we have $v'(x)\leq v(x)$ for all vertices $x\in \Z Q_0$.
Then the class of $\Phi(M')$ in the split Grothendieck group $K_0^{split}(\cd_Q)$
is obtained from that of $\Phi(M)$ by adding a positive integer linear combination
of elements of the form
\begin{equation} \label{eq:AR-triangle-relation}
[U] - [E] + [\tau^{-1}(U)]
\end{equation}
associated with Auslander--Reiten triangles
\begin{equation}\label{eq:AR-triangle}
\xymatrix{ 
U \ar[r] & E \ar[r] & \tau^{-1}(U) \ar[r] & \Sigma U
}
\end{equation}
for indecomposables $U$ of $\cd_Q$. By the transitivity of the degeneration
relation, we may assume that the class of $\Phi(M')$ is obtained from that
of $\Phi(M)$ by adding a single element~(\ref{eq:AR-triangle-relation}). This means
that we have decompositions
\[
\Phi(M) \iso V\oplus E \mbox{ and } \Phi(M') \iso V \oplus U \oplus \tau^{-1}(U)
\]
for some $V$ in $\cd_Q$. Now if we add a split triangle over the identity of
$V\oplus \tau^{-1}(U)$ to the triangle~(\ref{eq:AR-triangle}), we obtain
a triangle
\[
\xymatrix@C=0.5cm{
V\oplus \tau^{-1}(U)\oplus U \ar[r] & V\oplus \tau^{-1}(U)\oplus E \ar[r] & 
\tau^{-1}(U) \ar[r] & \Sigma (V\oplus \tau^{-1} (U)\oplus U) \ko
}
\]
and this is of the form
\[
\xymatrix{
\Phi(M') \ar[r] & \Phi(M) \oplus Z \ar[r] & Z \ar[r] & \Phi(M').
}
\]
Thus, we have $\Phi(M) \leq \Phi(M')$ as claimed.

\subsection{Description of the fibers} \label{ss:description-of-the-fibres}
Our goal in this section is to prove Theorem~\ref{thm:description-of-the-fibres}.
We first determine which fibres are non empty.
Let $w$ be a dimension vector of $\cs_C$ (i.e. a dimension vector of $\cs$
whose support is contained in $C$). Let $M$ be a point of $\cm_0(w)$.
Let $L_0=K_{LR}(M)$ and $(v_0,w)=\dimv L_0$. Recall from
section~\ref{ss:construction-of-Phi} that
\[
CK(M) = K_R(M)/K_{LR}(M)
\]
is an injective module over $k(\Z Q)$ isomorphic to $D\cd_Q(\Phi(M),?)$. 
For a dimension vector $u$ of $k(\Z Q)$, let $\Gr_u(CK(M))$ denote the
quiver Grassmannian of $k(\Z Q)$-submodules $N \subset CK(M)$ such
that $\dimv N = u$.

\begin{lemma} \label{lemma:non-empty-fibres}
Let $v$ be a dimension vector of $k(\Z Q)$. The fiber of $\pi: \cm(v,w) \to \cm_0(w)$
over $M$ is non empty iff the quiver Grassmannian $\Gr_{v-v_0}(CK(M))$ 
is non empty.
\end{lemma}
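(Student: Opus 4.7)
The plan is to establish a bijection between stable $\cR_C$-modules $L$ over $M$ and submodules of $CK(M)$, matching the dimension vectors appropriately, so that non-emptiness of the fiber is exactly non-emptiness of the Grassmannian.

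First I would show that any stable $\cR_C$-module $L$ with $\res(L)=M$ and $\dimv L=(v,w)$ fits into a canonical chain
\[
K_{LR}(M) \subseteq L \subseteq K_R(M).
\]
The inclusion $L \subseteq K_R(M)$ comes from the unit $\eta: L \to K_R(\res L) = K_R(M)$ of the adjunction: its restriction to $\cs_C$ is the identity of $M$, so $\ker(\eta)$ lies in $\cn$, and stability of $L$ forces $\ker(\eta)=0$. For the inclusion $K_{LR}(M)\subseteq L$, I would use the counit $K_L(M)=K_L(\res L)\to L$, whose composition with $\eta$ is the canonical map $K_L(M)\to K_R(M)$; since this map factors as $K_L(M)\twoheadrightarrow K_{LR}(M)\hookrightarrow K_R(M)$ and $\eta$ is injective, the image of $K_L(M)\to L$ identifies, inside $K_R(M)$, with $K_{LR}(M)$.

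Next I would observe that the quotient $K_R(M)/K_{LR}(M)=CK(M)$ lies in $\cn$, so taking the quotient $L/K_{LR}(M)$ yields a submodule $N\subseteq CK(M)$, and conversely, for any $k(\Z Q)$-submodule $N\subseteq CK(M)$, the preimage $L\subseteq K_R(M)$ of $N$ under $K_R(M)\twoheadrightarrow CK(M)$ is a submodule of the stable module $K_R(M)$, hence is itself stable, and satisfies $\res(L)=M$ since the restriction of $CK(M)$ to $\cs_C$ vanishes. This sets up a bijection between stable $\cR_C$-submodules $L\subseteq K_R(M)$ extending $K_{LR}(M)$ with $\res(L)=M$ and submodules $N\subseteq CK(M)$.

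Finally, since $N=L/K_{LR}(M)$ lies in $\cn$ and $\dimv L = \dimv K_{LR}(M) + \dimv N = (v_0,w)+(\dimv N,0)$, we have $\dimv L=(v,w)$ if and only if $\dimv N = v-v_0$. I would then conclude that the fiber $\pi^{-1}(\{M\})\subseteq \cm(v,w)$ is non-empty exactly when there exists at least one such submodule $N$, i.e.~exactly when $\Gr_{v-v_0}(CK(M))$ is non-empty. The main delicate point is verifying the two-sided inclusion $K_{LR}(M)\subseteq L\subseteq K_R(M)$ for every stable lift $L$; everything else is bookkeeping with the adjunctions and dimension vectors.
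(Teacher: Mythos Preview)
Your proposal is correct and follows essentially the same route as the paper: both arguments use stability to embed $L$ into $K_R(M)$ via the adjunction unit, use the factorization of $\can$ through $L$ to obtain $K_{LR}(M)\subseteq L$, and then pass to the quotient $CK(M)$ to identify stable lifts with submodules of the right dimension. Your write-up is slightly more explicit about why $\eta$ is injective and why the preimage is stable, but the underlying argument is the same.
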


\begin{proof} Suppose the fiber is non empty. Then there is a stable $\cR_C$-module
$L$ with $\dimv L = (v,w)$ and $\res(L)=M$. The adjunction morphisms yield a
commutative diagram
\[
\xymatrix@R=0.5cm{
K_{L}(M) \ar@{=}[dd] \ar@{->>}[rd] \ar[rr]^{\can} & & K_R(M) \ar@{=}[dd] \\
  & *++{K_{LR}(M)} \ar@{>->}[ru] \ar@{>..>}[d]& \\
K_L(\res L) \ar[r]_-{\eps L} & *++{L} \ar@{>->}[r]_-{\eta L} & K_R (\res L)
}
\]
Here the map $\eta L$ is injective since $L$ is stable. Since the canonical
morphism $K_L(L) \to K_R(L)$ equals $(\eta L)(\eps L)$, its image is
contained in that of $\eta L$ and we obtain a canonical injection
$K_{LR}(M) \to L$. The quotient $L/K_{LR}(M)$ is isomorphic to
the image $\ol{L}$ of $L$ in $CK(M)=K_R(M)/K_{LR}(M)$, which
is a submodule of dimension $v-v_0$. Conversely, if $U\subset CK(M)$
is a submodule of dimension $v-v_0$, its inverse image $L\subset K_R(M)$
is a stable $\cR_C$-module of dimension vector $(v,w)$ such that
$\res(L) = M$.
\end{proof}

\begin{lemma} \label{lemma:fibre-is-quiver-grassmannian}
Let $v$ be a dimension vector of $k(\Z Q)$. The fibre of 
$\pi: \cm(v,w) \to \cm_0(w)$ over $M$ is homeomorphic, in the complex-analytic
topology, to the quiver Grassmannian 
\[
\Gr_{v-v_0}(CK(M)) \mbox{ of } CK(M) = D \cd_Q(\Phi(M),?).
\]
\end{lemma}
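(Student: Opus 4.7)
The plan is to upgrade the set-theoretic bijection of Lemma~\ref{lemma:non-empty-fibres} to a bijective morphism of complex algebraic varieties $\Psi: \Gr_{v-v_0}(CK(M)) \to \pi^{-1}(\{M\})$; since the Grassmannian is projective, any such $\Psi$ is automatically proper, hence a closed continuous bijection, hence the required homeomorphism in the complex-analytic topology.

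To construct $\Psi$, I would proceed in families over the Grassmannian. Let $\mathcal U$ be the tautological subfamily of submodules of $CK(M)$, and set $\tilde L := p^{-1}(\mathcal U)$ where $p: K_R(M) \twoheadrightarrow CK(M)$ is the quotient. This is a family of $\cR_C$-modules whose fiber at a point $U \in \Gr_{v-v_0}(CK(M))$ is the stable module produced in the proof of Lemma~\ref{lemma:non-empty-fibres}, and which restricts to the constant family $M$ on $\cs_C$, because $CK(M)$ vanishes there, so $\tilde L$ and $K_R(M)$ have the same restriction to $\cs_C$. The non-frozen components $\tilde L(x)$, $x \in \Z Q_0$, are locally free of rank $v(x)$; on a Zariski-open cover $\{V_i\}$ of the Grassmannian trivialising all of them, a choice of trivialisations produces morphisms $V_i \to \tilde\cm(v,w)$, and composing with the quotient $\tilde\cm(v,w) \to \cm(v,w)$ eliminates the dependence on these choices. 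The main technical point is the descent of these local maps to the quotient: this reduces to the freeness of the $G_v$-action on the stable locus (any automorphism of a stable $\cR_C$-module fixing the frozen vertices must be trivial), which makes $\tilde\cm(v,w) \to \cm(v,w)$ a principal $G_v$-bundle in the \'etale topology, so the local maps glue into a global morphism $\Psi$ landing in the fibre.

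For the inverse on points, any stable $L \in \tilde\cm(v,w)$ with $\res L = M$ admits a canonical injection $\eta_L: L \hookrightarrow K_R(M)$ from the counit of the adjunction $\res \dashv K_R$ (injective because its kernel lies in the kernel $\cn$ of $\res$, which admits no nonzero morphism to the stable module $L$). The composite $K_L(M) = K_L(\res L) \to L \xrightarrow{\eta_L} K_R(M)$ restricts to the identity on $\cs_C$, hence agrees with the canonical morphism defining $K_{LR}(M)$; this forces $K_{LR}(M) \subset \eta_L(L)$, so $\eta_L(L)/K_{LR}(M)$ is a submodule of $CK(M)$ of dimension vector $v-v_0$. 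The assignment $L \mapsto \eta_L(L)/K_{LR}(M)$ is manifestly $G_v$-invariant, hence descends to a set-theoretic map $\pi^{-1}(\{M\}) \to \Gr_{v-v_0}(CK(M))$, and Lemma~\ref{lemma:non-empty-fibres} shows it is inverse to $\Psi$. This proves $\Psi$ is bijective, and combined with the first paragraph, completes the argument.
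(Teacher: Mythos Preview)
Your argument is correct and self-contained, but it is not the route the paper takes. The paper does not build the morphism $\Psi$ directly. Instead it invokes Nakajima's transversal slice theorem to reduce from the given $M$ to the semi-simple $\cs_C$-module $S$ of dimension vector $w_0 = w - (C_q v_0)\sigma^{-1}$: the slice theorem gives a homeomorphism in the analytic topology between the fibre over $M$ in $\cm(v,w)$ and the fibre over $S$ in $\cm(v-v_0,w_0)$. Since Lemma~\ref{lemma:decomp-PhiM} shows $CK(M)\cong CK(S)$, one is reduced to the case $M=S$, and for that case the paper cites Savage--Tingley (building on Shipman and Lusztig).

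What each approach buys: the paper's proof is two lines long because the hard geometry is outsourced to the slice theorem and to \cite{SavageTingley11}, but it imports those black boxes. Your approach is more elementary and uniform --- it never uses the slice theorem and treats all $M$ at once via the tautological family on the Grassmannian and the compactness/Hausdorff trick --- at the price of having to verify the GIT descent for the classifying map (the \'etale-local triviality of $\tilde\cm(v,w)\to\cm(v,w)$, which you correctly reduce to freeness of the $G_v$-action on the stable locus). Your argument in fact yields a bit more than the statement asks: $\Psi$ is a morphism of varieties, not merely a homeomorphism.
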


\begin{proof} By the previous lemma, we can assume that the fiber is non empty.
Hence $w  \sigma - C_q v_0$ has non negative components: these components
indicate the multiplicities of the indecomposable factors of $\Phi(M)$ by
Lemma~\ref{lemma:decomp-PhiM}. Consider the following dimension 
vector of $\cs_C$:
\[
w_0= w - (C_q v_0) \sigma^{-1}.
\]
Let $S$ be the semi-simple $\cs_C$-module of dimension vector $w_0$.
By Nakajima's slice Theorem (Theorem~2.4.9 of \cite{KimuraQin12} based on
Theorem~3.14 of \cite{Nakajima11} based on \S 3.3 of \cite{Nakajima01}), 
the fibre of $\pi: \cm(v,w) \to \cm_0(w)$ over $M$ is isomorphic, in the complex-analytic 
topology, to the fibre of
\[
\pi: \cm(v-v_0, w_0) \to \cm_0(w_0)
\]
over $S$. Moreover, it follows from Lemma~\ref{lemma:decomp-PhiM}, that
$CK(M)$ is isomorphic to $CK(S)$. So it remains to prove the assertion
for $M=S$. In this case, it was shown by Savage--Tingley in
Theorem~5.4 of \cite{SavageTingley11}, who used input from
Shipman's \cite{Shipman10} to improve on a bijection constructed in
the non graded case by Lusztig in Theorem~2.26 of \cite{Lusztig98a}.
\end{proof}

\subsection{Exactness of $\Phi$ in the non Dynkin case}
\label{ss:stratifying-functor-non-Dynkin-case}
Let $Q$ be a connected non Dynkin quiver. Our goal in this
section is to show that the stratifying functor $\Phi$ constructed
in section~\ref{ss:construction-of-Phi} is exact in a suitable sense. 
Let $H: k(\Z Q) \to \cd_Q$ be
Happel's embedding (Theorem~\ref{thm:Happel}). Its image consists of
the $\tau$-orbits in $\cd_Q$ of the indecomposable projective $kQ$-modules.
Recall that $\cv$ denotes the category of all finite direct sums of objects in the image.
The category $\cv$ is the category of `vector bundles' on the `non commutative
curve' whose category of coherent sheaves is the heart of the
$t$-structure on $\cd_Q$ whose left aisle consists of the
objects $X$ such that $H^1(X)$ is a preinjective $kQ$-module
and $H^p(X)$ vanishes for all $p\geq 2$. In particular, $\cv$ is an
exact category, whose conflations are the sequences which give
rise to triangles in $\cd_Q$. 

\begin{theorem} \label{thm:exact-functor-Phi}
The functor $\Phi: \mod(\cs_C) \to \cv$ is exact.
\end{theorem}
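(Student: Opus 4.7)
The plan is to exploit that both $\mod(\cs_C)$ and $\cv$ are hereditary when $Q$ is non-Dynkin, and then to run a diagram chase using the Kan extensions $K_L$, $K_R$ and the natural transformation $\can$.

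First I would establish the hereditary structure on both sides. Since $Q$ is connected and not Dynkin, Corollary~\ref{cor:ext-between-simples-in-sc} gives
\[
\Ext^p_{\cs_C}(S_{\sigma(x)}, S_{\sigma(y)}) \cong \Hom_{\cd_Q}(H(x), \Sigma^p H(y)) = 0
\]
for all $p \geq 2$ and all $x, y \in \Z Q_0$. A standard dévissage along composition series extends this to $\Ext^p_{\cs_C}(M, N) = 0$ for all $p \geq 2$ and all finite-dimensional $\cs_C$-modules $M, N$, so $\mod(\cs_C)$ is hereditary. Dually, using Serre duality in $\cd_Q$ (formula~(\ref{eq:Auslander-Reiten-formula})) and the $\tau$-stability of $\cv$, one finds $\Hom_{\cd_Q}(V, \Sigma^p V') = 0$ for all $V, V' \in \cv$ and all $p \geq 2$, so $\cv$ is hereditary as an exact subcategory of $\cd_Q$.

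Next I would address the heart of the argument. Given a short exact sequence $0 \to M' \to M \to M'' \to 0$ in $\mod(\cs_C)$, I apply the right-exact $K_L$ and the left-exact $K_R$. By Theorem~\ref{thm:res-simple-SC-modules}~b), every simple $\cs_C$-module has a length-one projective resolution whose terms are restrictions of finitely generated projective $\cR_C$-modules; combined with the horseshoe lemma and the hereditary property of $\cs_C$, this forces the first derived functors of $K_L$ and $K_R$ (which a priori take values in $\cn$) to vanish on $\mod(\cs_C)$. Consequently, both $K_L$ and $K_R$ send the sequence to short exact sequences in $\mod(\cR_C)$. The snake lemma applied to the resulting $2 \times 3$ ladder linked by $\can$ then yields the six-term exact sequence
\[
0 \to KK(M') \to KK(M) \to KK(M'') \to CK(M') \to CK(M) \to CK(M'') \to 0.
\]

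To finish, I would invoke Proposition~\ref{prop:description-KK-CK} (whose proof should use only the Kan extension computations and the resolutions from Theorem~\ref{thm:rc-resolutions}~b), not exactness of $\Phi$): the module $CK(M) = D\Hom_{\cd_Q}(\Phi(M), H(?))$ identifies with the cofree injective $k(\Z Q)$-module associated with $\Phi(M)$, and $KK(M) = \Hom_{\cd_Q}(H(?), \tau\Phi(M))$ with the free projective associated with $\tau\Phi(M)$. Since $\cv$ is hereditary, extensions in $\cv$ are classified by $\Hom_{\cd_Q}(-, \Sigma -)$, so the six-term sequence translates, via Happel's fully faithful embedding $H: k(\Z Q) \to \cv$, into a distinguished triangle
\[
\Phi(M') \to \Phi(M) \to \Phi(M'') \to \Sigma \Phi(M')
\]
in $\cd_Q$ with all outer terms in $\cv$; this is the desired conflation. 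The hardest step will be the exactness of the Kan extensions on $\mod(\cs_C)$: one must combine the horseshoe lemma with the specific structure of the length-one projective (and injective) resolutions from Theorem~\ref{thm:res-simple-SC-modules}~b), tracking carefully that the relevant first derived functor, which factors through the hereditary category $\cn = \Mod k(\Z Q)$, indeed vanishes in this non-Dynkin setting.
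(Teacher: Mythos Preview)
Your proposal shares the core architecture with the paper's proof: show that $K_L$ and $K_R$ are exact on $\mod(\cs_C)$, apply the snake lemma to the natural transformation $\can$ to obtain the six-term exact sequence in $KK$ and $CK$, and then read off the conflation. The differences lie in how you justify the first and last steps.

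For the exactness of the Kan extensions, your route via ``hereditary plus horseshoe'' is more circuitous than necessary, and the sentence ``this forces the first derived functors of $K_L$ and $K_R$ to vanish'' is not quite an argument: heredity of $\mod(\cs_C)$ alone does not kill $L_1 K_L$. What actually does the work is the specific shape of the length-one resolutions from Theorem~\ref{thm:res-simple-SC-modules}~b), and this is exactly what the paper exploits directly in Lemma~\ref{lemma:KL-KR-exact}: those resolutions show that $\res$ preserves finitely generated projectives and finitely cogenerated injectives, whence by adjunction $K_R$ and $K_L$ are exact. This is a one-line replacement for your horseshoe argument.

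The final step, however, has a genuine gap. You invoke Proposition~\ref{prop:description-KK-CK} for both $CK$ and $KK$, but in the non-Dynkin setting only the $CK$ identification is available (it is the \emph{definition} of $\Phi$ in section~\ref{ss:construction-of-Phi}); the $KK$ identification is proved in the paper only in the Dynkin case (section~\ref{ss:description-of-Phi-via-Kan-extensions}) via Gorenstein homological algebra. Even granting both identifications, the claim that the six-term sequence ``translates into a distinguished triangle'' is not justified: the connecting map $KK(M'')\to CK(M')$ is a priori only a morphism of $k(\Z Q)$-modules, and exactness of six consecutive terms of the form $\Hom(H(?),X)$ does not by itself force a triangle in $\cd_Q$, especially since in the non-Dynkin case $H$ is not essentially surjective. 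The paper avoids this entirely: it uses only the definitional $CK$ identification to obtain left exactness of
\[
0 \to \Hom(\Phi(M''),?) \to \Hom(\Phi(M),?) \to \Hom(\Phi(M'),?)
\]
in $\Lex(\cv^{op})$, and then shows that the last map is an epimorphism there by proving its cokernel in $\Mod(\cv^{op})$ is \emph{effaceable}. This cokernel is a subfunctor of $D\,KK(M'')$; the only property of $KK(M'')$ used is that it is right bounded (immediate from $K_L(M'')$ being right bounded), and the Auslander--Reiten conflations of $\cv$ then show that every right-bounded left $\cv$-module is effaceable. That effaceability argument is the missing ingredient in your proposal.
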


\begin{lemma} \label{lemma:KL-KR-exact}
The functors $K_L$ and $K_R: \Mod(\cs_C) \to \Mod(\cR_C)$
are exact.
\end{lemma}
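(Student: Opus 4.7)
The plan is to express the Kan extensions $K_L$ and $K_R$ via explicit tensor-product and Hom formulas and then read off exactness from the resolutions of Theorem~\ref{thm:rc-resolutions}(b).

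First, the adjunctions $K_L \dashv \res \dashv K_R$ combined with the Yoneda lemma yield, for each $\cs_C$-module $M$ and each vertex $y$ of $\cR_C$, functorial identifications
\[
K_L(M)(y) = M \otimes_{\cs_C} \cR_C(y,-)|_{\cs_C}, \qquad
K_R(M)(y) = \Hom_{\cs_C}(\cR_C(-,y)|_{\cs_C},\, M),
\]
where $\cR_C(y,-)|_{\cs_C}$ is regarded as a left $\cs_C$-module and $\cR_C(-,y)|_{\cs_C}$ as a right $\cs_C$-module. Consequently, exactness of $K_L$ in $M$ is equivalent to flatness of $\cR_C(y,-)|_{\cs_C}$ as a left $\cs_C$-module for every $y$, and exactness of $K_R$ in $M$ is equivalent to projectivity of $\cR_C(-,y)|_{\cs_C}$ as a right $\cs_C$-module.

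For $y$ a vertex of $\cs_C$, these restrictions are, respectively, the corepresentable $\cs_C(y,-)$ and the representable $\cs_C(-,y)$, which are free and hence projective. For $y$ non-frozen, we use the non-Dynkin case of Theorem~\ref{thm:rc-resolutions}(b): the exact sequence
\[
0 \to P_C(y) \to y^\wedge \to y^\wedge_\cd \to 0
\]
in $\Mod(\cR_C)$, together with the fact that $y^\wedge_\cd$ is pulled back along the projection $\cR_C \to \cR_C/\langle C \rangle$ and hence vanishes on every frozen vertex, gives $\cR_C(-,y)|_{\cs_C} \cong \res(P_C(y))$. Since $P_C(y)$ is a direct sum of representables $\sigma(v)^\wedge_{\cR_C}$ based at frozen vertices, its restriction is a direct sum of free right $\cs_C$-modules, hence projective. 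The dual argument, applied after $k$-dualising the injective coresolution $0 \to y^\vee_\cd \to y^\vee \to I_C(y) \to 0$, produces an exact sequence of left $\cR_C$-modules $0 \to D I_C(y) \to \cR_C(y,-) \to \cd_Q(y,-) \to 0$; since $\cd_Q(y,-)$ also vanishes upon restriction to $\cs_C$, we obtain $\cR_C(y,-)|_{\cs_C} \cong D I_C(y)|_{\cs_C}$, a direct sum of corepresentable left $\cs_C$-modules, hence projective and in particular flat.

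With both restrictions shown to be projective, the tensor-product and Hom formulas above are manifestly exact in $M$, completing the proof. The main technical obstacle is the careful bookkeeping between left and right $\cs_C$-module structures when transferring the right-module content of Theorem~\ref{thm:rc-resolutions}(b) to the left-module statement needed for $K_L$; once the conventions are pinned down, no further homological input is required.
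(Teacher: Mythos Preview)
Your argument is correct and coincides with the paper's for $K_R$: both identify $\res(y^\wedge)$ with $\res(P_C(y))$, a sum of representables over frozen vertices, hence projective, and conclude exactness of $K_R$ from the $\Hom$ formula. For $K_L$ the paper takes the dual route on the right-module side rather than passing to left modules: it observes that $\res$ sends finitely cogenerated injectives to injectives (again by Theorem~\ref{thm:rc-resolutions}(b), since $\res(y^\vee)\cong\res(I_C(y))$), and then uses that exactness in $\Mod(\cR_C)$ can be tested against $\Hom(-,I)$ for such $I$, together with the adjunction $\Hom(K_L(-),I)\cong\Hom(-,\res I)$. Your dualization of the coresolution to show $\cR_C(y,-)|_{\cs_C}$ is left-projective is the $k$-dual translation of exactly this fact, so the two proofs differ only in packaging; the paper's version avoids the left/right bookkeeping you flag, at the cost of invoking the (easy) principle that finitely cogenerated injectives detect exactness.
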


\begin{proof}[Proof of the Lemma] By applying the restriction functor
to the sequences of part b) of Theorem~\ref{thm:res-simple-SC-modules}
we see that $\res(x^\wedge)=\res(P_C(x))$ is projective and 
$\res(x^\vee)=\res(I_C(x))$ is injective
for each non frozen vertex $x$. Moreover, the restriction of 
$\sigma(x)^\wedge$ is clearly projective and that of $\sigma(x)^\vee$
injective. It follows that the restriction functor preserves projectivity and
thus its right adjoint $K_R$ is exact. Moreover, we see that $\res$ takes
finitely cogenerated injective modules to injective modules. Now in order
to check whether a sequence is exact, it suffices to check whether
its image under $\Hom(?,I)$ is exact for each finitely cogenerated
injective module $I$. Thus, the left adjoint $K_L$ of $\res$ is
also exact.
\end{proof}

\begin{proof}[Proof of the Theorem] Let
\begin{equation} \label{eq:ex-seq-modSc}
0 \to M' \to M \to M'' \to 0
\end{equation}
be an exact sequence of $\mod(\cs_C)$. We know 
(for example from the appendix of \cite{Keller90}), that in order to show 
that the sequence
\[
0 \to \Phi(M') \to \Phi(M) \to \Phi(M'') \to 0
\]
is a conflation of the exact category $\cv$, it suffices to show that the sequence
\[
0 \to \Hom(\Phi(M''),?) \to \Hom(\Phi(M),?) \to \Hom(\Phi(M'),?) \to 0
\]
is exact in the abelian category of left exact functors 
$\Lex(\cv^{op})\subset \Mod(\cv^{op})$.
By Lemma~\ref{lemma:KL-KR-exact}, the images of the
sequence (\ref{eq:ex-seq-modSc}) under $K_L$ and $K_R$ are exact. By the snake lemma, we thus
have an exact sequence
\begin{align*}
0 &\to KK(M') \to KK(M) \to KK(M'')  \\
   &\to CK(M') \to CK(M) \to CK(M'') \to 0.
\end{align*}
Since we have
\[
CK(M)(x) = D\Hom(\Phi(M), H(x)) \ko
\]
we deduce that the sequence
\[
0 \to \Hom(\Phi(M''),?) \to \Hom(\Phi(M),?) \to \Hom(\Phi(M'),?)
\]
is exact in the category $\Mod(\cv^{op})$ and thus in $\Lex(\cv^{op})$. 
In order to show that the morphism
\[
\Hom(\Phi(M),?) \to \Hom(\Phi(M'		),?)
\]
is epimorphic in $\Lex(\cv^{op})$ it suffices to show that its cokernel $U$
in $\Mod(\cv^{op})$ is effaceable, i.e. that for each element
$f$ of $U(V)$, $V\in \cv$, there is an inflation $V \to V'$ such that
the map $U(V) \to U(V')$ takes $f$ to zero. Now the cokernel $U$ is a subfunctor 
of the $k$-dual of  $KK(M'')$, and $KK(M'')$ is right bounded. Now
all the Auslander-Reiten sequences
\[
0 \to H(x) \to \bigoplus_{x\to y} H(y) \to H(\tau^{-1} (x)) \to 0
\]
where $x$ is a vertex of $\Z Q_0$ and the sum ranges over all arrows
of $\Z Q$ with source $x$, are conflations of $\cv$. In particular, the maps
\[
0 \to H(x) \to \bigoplus_{x\to y} H(y) 
\]
are inflations. This shows that each right bounded left $\cv$-module
is effaceable. Thus, the $k$-dual of $KK(M'')$ and its submodule $U$
are effaceable, as claimed.
\end{proof}

\begin{corollary} \label{cor:surjectivity-ext1-non-Dynkin}
Let $U$ be a finite-dimensional semi-simple $\cs_C$-module
and $M$ a finite-dimensional $\cs_C$-module. Then $\Phi$
induces a surjection
\[
\Ext^1_{\cs_C}(U,M) \to \Ext^1_{\cd_Q}(\Phi(U), \Phi(M)).
\]
\end{corollary}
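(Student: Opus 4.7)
The plan is to deduce the corollary from the exactness of $\Phi$ (Theorem~\ref{thm:exact-functor-Phi}) together with Corollary~\ref{cor:ext-between-simples-in-sc}, by induction on the composition length of the target module $M$. By biadditivity of $\Ext^1$ in its arguments, we reduce at the outset to the case where $U = S_{\sigma(x)}$ is a single simple $\cs_C$-module.

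In the base case, $M = S_{\sigma(y)}$ is also simple, and Corollary~\ref{cor:ext-between-simples-in-sc} provides a canonical isomorphism
\[
\Ext^1_{\cs_C}(S_{\sigma(x)}, S_{\sigma(y)}) \iso \Ext^1_{\cd_Q}(H(x), H(y)),
\]
computed from the minimal injective co-resolution of $S_{\sigma(y)}$ supplied by Theorem~\ref{thm:res-simple-SC-modules}~b). One verifies that this isomorphism coincides with the map induced by the exact $\delta$-functor $\Phi$: a naturality check that uses $\Phi(S_{\sigma(x)})=H(x)$ and the compatibility of $\Phi$, constructed from the Kan extensions $K_L$ and $K_R$, with the given co-resolutions.

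For the inductive step, choose a short exact sequence $0\to K\to M\to S\to 0$ in $\mod\cs_C$ with $S$ simple, so that $K$ has strictly smaller composition length than $M$. Applying the exact functor $\Phi:\mod\cs_C\to\cv$ yields a conflation $0\to\Phi(K)\to\Phi(M)\to\Phi(S)\to 0$ in $\cv$, equivalently a triangle in $\cd_Q$. Applying $\Hom_{\cs_C}(U,-)$ and $\Hom_{\cd_Q}(\Phi(U),-)$ gives a commutative ladder of long exact sequences, both of whose rows terminate at $\Ext^1(-,S)\to 0$: the upper row because $U$ has projective dimension at most $1$ by Theorem~\ref{thm:res-simple-SC-modules}~b), so $\Ext^2_{\cs_C}(U,K)=0$; the lower row because $\Ext^2_{\cd_Q}(V,W)=0$ for any $V,W\in\cv$, which by additivity reduces to the vanishing $\Ext^{\geq 2}_{\cd_Q}(H(x),H(y))=0$ from Corollary~\ref{cor:ext-between-simples-in-sc} in the non-Dynkin case.

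A standard diagram chase then completes the proof: given $\eta\in\Ext^1_{\cd_Q}(\Phi(U),\Phi(M))$, push it forward to $\eta_S\in\Ext^1_{\cd_Q}(\Phi(U),\Phi(S))$, lift to $\tilde\eta_S\in\Ext^1_{\cs_C}(U,S)$ via the base case, and lift further to $\tilde\eta_M\in\Ext^1_{\cs_C}(U,M)$ via the upper row's surjection. The difference $\Phi(\tilde\eta_M)-\eta$ maps to zero in $\Ext^1_{\cd_Q}(\Phi(U),\Phi(S))$ and is therefore the image of some $\eta_K\in\Ext^1_{\cd_Q}(\Phi(U),\Phi(K))$; the inductive hypothesis lifts $\eta_K$ to $\tilde\eta_K\in\Ext^1_{\cs_C}(U,K)$, and then $\tilde\eta_M$ minus the image of $\tilde\eta_K$ in $\Ext^1_{\cs_C}(U,M)$ is a $\Phi$-preimage of $\eta$. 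The main obstacle is the base-case naturality check, identifying the $\delta$-map of $\Phi$ on $\Ext^1$ of simples with the canonical isomorphism of Corollary~\ref{cor:ext-between-simples-in-sc}.
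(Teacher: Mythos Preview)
Your proposal is correct and follows essentially the same route as the paper's proof: reduce to $U$ simple, settle the base case $M$ simple via the length-one resolution of Theorem~\ref{thm:res-simple-SC-modules}~b) (equivalently, Corollary~\ref{cor:ext-between-simples-in-sc}), and then induct on the length of $M$ using that both $\Ext^1_{\cs_C}(U,?)$ and $\Ext^1_{\cd_Q}(\Phi(U),\Phi(?))$ are right exact. Your explicit diagram chase simply unpacks the right-exactness argument the paper states in one line, and your flagged ``naturality check'' in the base case is exactly what the paper absorbs into ``easy to check using part~b) of Theorem~\ref{thm:res-simple-SC-modules}''.
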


\begin{proof} We may assume that $U$ is a simple $\cs_C$-module
$S_{\sigma^{-1}(x)}$. If $M$ is also a simple $\cs_C$-module
$S_{\sigma^{-1}(y)}$, the claim is easy to check using part b)
of Theorem~\ref{thm:res-simple-SC-modules}. For the general
case, we use induction on the length of $M$ and the fact
that both functors $\Ext^1_{\cs_C}(U,?)$ and $\Ext^1_{\cd_Q}(\Phi(U), \Phi(?))$
are right exact. For $\Ext^1_{\cs_C}(U,?)$, this is again a
consequence of part b)
of Theorem~\ref{thm:res-simple-SC-modules}, which yields
a resolution of length $1$.
\end{proof}

\section{The Dynkin case}
\label{s:stratifying-functor-Dynkin-case}

\subsection{The singularity category of $\cs_C$} \label{ss:sing-of-SC}
Suppose that $Q$ is a connected acyclic quiver. By part b) of
Theorem~\ref{thm:res-simple-SC-modules}, if $Q$ is not a Dynkin
quiver, the category $\cs_C$ is the path category of an (infinite) quiver
and thus is of global dimension one. On the other hand, if $Q$ is
a Dynkin quiver, by part~a) of Theorem~\ref{thm:res-simple-SC-modules}, the
category $\cs_C$ is of infinite global dimension. We will show that
its singularity category (defined via Gorenstein projective/injective modules)
is equivalent to the derived category of $Q$. We will then use this
equivalence to construct the stratifying functor $\Phi: \mod\cs_C \to \cd_Q$
as outlined in section~\ref{ss:Gorenstein-homological-algebra}.

\subsection{Construction of resolutions} \label{ss:construction-of-resolutions}
From now on, we suppose that $Q$ is a Dynkin quiver and $C$ a configuration
in $\Z Q$ satisfying Assumption~\ref{main-assumption}. We have the
restriction functor and its right and left adjoints, which we denote by the
same symbols as in the case where $C=\Z Q_0$ considered in
section~\ref{ss:descr-Phi-Kan-extensions}.
\[
\xymatrix{
\Mod(\cR_C) \ar[d]|-*+{{\scriptstyle \res}} \\
\Mod(\cs_C) \ar@<2ex>[u]^{K_L} \ar@<-2ex>[u]_{K_R}
}
\]
The Kan extensions $K_L$ and $K_R$ are fully faithful so that
$\res$ is a localization of abelian categories in the sense of
\cite{Gabriel62}. Recall from Lemma~\ref{lemma:characterization-image-KR-KL}
that an object $M$ belongs to the
image of $K_R$ if and only if, for each $\cR_C$-module
$N$ with $\res(N)=0$, we have $\Hom(N,M)=0$ and $\Ext^1(N,M)=0$.

\begin{lemma} \label{lemma:crit-closed}
\begin{itemize} 
\item[a)] Each $\cR_C$-module $N$ with $\res(N)=0$ is the
union of its submodules of finite length.
\item[b)] An $\cR_C$-module $M$ belongs to the image of $K_R$ if
and only if we have 
\[
\Hom(S_x, M)=0 \mbox{ and }\Ext^1(S_x, M)=0
\]
for each non frozen vertex $x$.
\end{itemize}
\end{lemma}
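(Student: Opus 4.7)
The plan is to treat (a) and (b) separately, using the Dynkin hypothesis on $Q$ to establish strong finiteness of the mesh category in (a), and then to bootstrap from simples to arbitrary $N \in \cn$ in (b).

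For part (a), I would exploit Happel's Theorem~\ref{thm:Happel}, which for Dynkin $Q$ identifies $k(\Z Q) \cong \cR_C/\langle \cs_C \rangle$ with $\ind(\cd_Q)$. Since $\cd_Q$ is $\Hom$-finite and, for each indecomposable $x$, the set of indecomposables $y$ with $\cd_Q(y,x) \neq 0$ is finite, every representable $x^\wedge_{k(\Z Q)}$ has finite total dimension as a $k$-vector space. Consequently every finitely generated $k(\Z Q)$-module is finite-dimensional, hence of finite length. Since $\cn$ identifies with $\Mod(k(\Z Q))$, every $N \in \cn$ is the filtered union of its finitely generated---and thus finite-length---submodules, giving (a).

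For part (b), the ``only if'' direction is immediate from Lemma~\ref{lemma:characterization-image-KR-KL}, applied to $N = S_x$ with $x$ non-frozen (such $S_x$ indeed lies in $\cn$). For the converse I would prove that $\Hom(N,M) = 0$ and $\Ext^1(N,M) = 0$ for \emph{every} $N \in \cn$, then again invoke Lemma~\ref{lemma:characterization-image-KR-KL}. The first step handles finite-length $N$ by induction on length: the simple objects of $\cn$ are exactly the $S_x$ with $x$ non-frozen (a frozen simple has non-trivial restriction), so the long exact sequence of $\Ext$ applied to a composition series of $N$ reduces both vanishings to the standing hypotheses.

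The second step passes from finite-length $N$ to arbitrary $N$ via part (a). Writing $N = \bigcup_\alpha N_\alpha$ as a directed union of finite-length submodules, vanishing of $\Hom(N,M)$ is automatic because $\Hom(-,M)$ carries filtered colimits to limits. For $\Ext^1(N,M)$, I would argue by compatible gluing of splittings: given any extension $0 \to M \to E \to N \to 0$, its pullback to each $N_\alpha$ splits by the finite-length case, and the splitting $s_\alpha : N_\alpha \to E$ is \emph{unique} because two splittings would differ by an element of $\Hom(N_\alpha, M) = 0$. Uniqueness forces the $s_\alpha$ to be compatible with inclusions $N_\alpha \subset N_\beta$, so they glue to a global splitting $s : N \to E$, showing $\Ext^1(N,M) = 0$. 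The only delicate point is precisely this compatibility-and-gluing step, which is standard once one notices that vanishing of $\Hom$ on each $N_\alpha$ rigidifies the splittings.
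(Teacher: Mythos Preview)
Your proof is correct. Part~(a) is essentially the paper's argument: both use that the Dynkin hypothesis makes each representable $k(\Z Q)$-module finite-dimensional (equivalently, of finite length), whence every module in $\cn$ is the union of its finite-length submodules.

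Part~(b), however, takes a genuinely different route. You prove the full conclusion of Lemma~\ref{lemma:characterization-image-KR-KL}, namely $\Hom(N,M)=0$ and $\Ext^1(N,M)=0$ for \emph{every} $N\in\cn$, by first treating finite-length $N$ via composition series and then passing to the colimit; the passage for $\Ext^1$ is handled by your uniqueness-of-splitting argument, which is valid precisely because $\Hom(N_\alpha,M)=0$ rigidifies the sections. The paper instead avoids ever proving $\Ext^1(N,M)=0$ for arbitrary $N$: having established $\Hom(N,M)=0$ for all $N\in\cn$ (which, as you note, is immediate from part~(a) and the limit description of $\Hom$), it observes that the adjunction morphism $M\to K_R\res(M)$ is therefore injective, and then shows directly that its cokernel $M'$ vanishes. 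This requires only that $\Hom(S_x,M')=0$ for each non-frozen $x$, which follows from the exact sequence $\Hom(S_x,K_R\res(M))\to\Hom(S_x,M')\to\Ext^1(S_x,M)$ and the hypothesis; part~(a) then forces $M'=0$. Your approach is more direct and yields the stronger intermediate statement that $\Ext^1(N,M)=0$ for all $N\in\cn$; the paper's is slicker in that it sidesteps the colimit issue for $\Ext^1$ entirely by working with the adjunction morphism.
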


\begin{proof} a) Let $N$ be a $\cR_C$-module such that $\res(N)=0$. 
Then $N$ is a module over the quotient of $\cR_C$ by the ideal
generated by the identities of the objects $\sigma(x)$, $x\in\Z Q_0$.
Now this quotient is equivalent to the category of indecomposable
objects of the derived category $\cd_Q$. Since $Q$ is a Dynkin
quiver, the projective $\cd_Q$-modules $\cd_Q(?,u)$, $u\in \cd_Q$,
are of finite length. Thus, each $\cd_Q$-module is the union of
its submodules of finite length and the same holds for the $\cR_C$-modules
whose restriction to $\cs_C$ vanishes. Thus, the claim holds for $N$.

b) Of course, the condition is necessary. Suppose conversely that
it holds for some $\cR_C$-module $M$. By part a), it follows that
we have $\Hom(N,M)=0$ for each $\cR_C$-module $N$ with
$\res(N)=0$. Thus, the adjunction morphism $M \to K_R \res(M)$ is
injective and we have an exact sequence
\[
0 \to M \to K_R \res(M) \to M' \to 0 \ko
\]
where $\res(M')=0$. We have the exact sequence
\[
\Hom(S_x, K_R \res(M)) \to \Hom(S_x, M') \to \Ext^1(S_x, M) \ko
\]
where the first and the last term vanish. Thus, the module $M'$ has
no submodules of finite length. By part a), we must have $M'=0$.
\end{proof}

\begin{lemma} \label{lemma:KLP-iso-KRP}
\begin{itemize}
\item[a)] For each  finitely generated projective
$\cs_C$-module $P$,  the canonical morphism $K_L P \to K_R P$ is
invertible.
\item[b)] For each finitely generated projective $\cR_C$-module $P$, the
canonical morphism $P \to K_R(\res P)$ is invertible. 
\item[c)] For each finitely generated projective $\cR_C$-module $P$, the
module 
\[
K_{LR}(\res(P))
\]
is isomorphic to the submodule of $P$ generated by
the images of all morphisms $\sigma(x)^\wedge \to P$, $x\in C$.
\end{itemize}
\end{lemma}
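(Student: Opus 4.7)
My plan is to derive all three statements from a single observation: every finitely generated projective $\cR_C$-module lies in the image of the right Kan extension $K_R$. Since $K_R$ is fully faithful (as a right adjoint to a localization of abelian categories), this observation will immediately yield part~(b) because the unit morphism $P \to K_R(\res P)$ is then invertible.

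To establish the observation, I will first reduce to the case $P = v^\wedge_{\cR_C}$ for a single object $v$ of $\cR_C$, using Kaplansky's theorem --- applicable because $\cR_C$ is directed with one-dimensional endomorphism rings by Assumption~\ref{as:directed-category}. By Lemma~\ref{lemma:crit-closed}(b) it suffices to verify the two vanishings $\Hom(S_x, P) = 0$ and $\Ext^1_{\cR_C}(S_x, P) = 0$ for each non-frozen vertex $x$ of $\Z\tilde{Q}$. The central tool here is the duality isomorphism~(\ref{eq:duality}) of Corollary~\ref{cor:RHom}, which upon passing to cohomology yields
\[
D\Ext^p_{\cR_C}(S_x, P) \iso \Ext^{2-p}_{\cR_C}(P, S_{\tau(x)}) \ko
\]
so that projectivity of $P$ forces the right-hand side to vanish for $p = 0$ and for $p = 1$, giving both required vanishings.

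Part~(a) will then follow by specialization. For $u$ an object of $\cs_C$, the adjunction $K_L \dashv \res$ together with the Yoneda lemma yields a canonical isomorphism $K_L(u^\wedge_{\cs_C}) \iso u^\wedge_{\cR_C}$; part~(b) applied to $u^\wedge_{\cR_C}$ (whose restriction to $\cs_C$ is $u^\wedge_{\cs_C}$) supplies $K_R(u^\wedge_{\cs_C}) \iso u^\wedge_{\cR_C}$, and under both identifications the morphism $\can$ becomes the identity of $u^\wedge_{\cR_C}$. Additivity then extends the result to arbitrary finitely generated projective $\cs_C$-modules.

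For~(c), the identification $K_R(\res P) = P$ from~(b) allows me to view $K_{LR}(\res P)$ as the image in $P$ of the counit $\can : K_L(\res P) \to P$. I will compute the latter via the standard coend description
\[
K_L(\res P)(w) = \int^{u \in \cs_C} \cR_C(w, u) \otimes_k P(u) \ko
\]
under which the counit sends an elementary tensor $f \otimes p$ (with $p \in P(u)$ corresponding by Yoneda to a morphism $\tilde p : u^\wedge_{\cR_C} \to P$) to $\tilde p(f) \in P(w)$. Its image is therefore the submodule of $P$ spanned by all such values, i.e.~the submodule generated by the images of all Yoneda morphisms from representables at objects of $\cs_C$, which in the paper's indexing is precisely the submodule generated by the images of the morphisms $\sigma(x)^\wedge \to P$ for $x \in C$. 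The main obstacle is the $\Ext$-vanishing underpinning the central observation, but the duality~(\ref{eq:duality}) combined with projectivity of $P$ disposes of it cleanly; the remaining steps are formal manipulations of adjunctions and the standard coend.
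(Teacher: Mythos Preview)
Your proof is correct and rests on the same central observation as the paper's, namely that the duality~(\ref{eq:duality}) combined with Lemma~\ref{lemma:crit-closed}(b) forces projective $\cR_C$-modules into the image of $K_R$. The organization, however, is genuinely different and somewhat more economical. You prove~(b) first, directly and uniformly for every finitely generated projective $\cR_C$-module, and then read off~(a) as the special case $K_L(u^\wedge_{\cs_C}) \iso u^\wedge_{\cR_C}$. The paper instead proves~(a) first (applying the duality only to $K_L P$, which is a sum of frozen representables), and then establishes~(b) for the non-frozen representables $\tau(x)^\wedge$ by a separate argument: it applies $K_R\circ\res$ to the short exact sequence $0 \to \tau(x)^\wedge \to \sigma(x)^\wedge \to S_{\sigma(x)} \to 0$ and identifies $K_R(\res(\tau(x)^\wedge))$ as the kernel of a nonzero map $\sigma(x)^\wedge \to \sigma(x)^\vee$. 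Your route avoids this extra computation entirely by noting that the duality argument already applies to \emph{any} projective $P$, not just the frozen representables. For~(c), your coend description of the counit is simply a more explicit form of what the paper asserts ``by definition''; the content is the same.
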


\begin{proof} a) It suffices to show that $K_L P$ belongs to the image of
$K_R$. We check the condition of part~b) of Lemma~\ref{lemma:crit-closed}.
Let $z$ be a non frozen vertex. By part e) of Corollary~\ref{cor:RHom}, we have
\[
D\RHom(S_z, K_L P) = \RHom(K_L P, \Sigma^2 S_{\tau^{-1}(z)}).
\]
This last object vanishes since $K_L P$ is a direct sum of projectives
$\sigma(y)^\wedge$, $y\in\Z Q_0$, and $\tau^{-1}(z)$ is a non frozen
vertex.

b) By part a), it suffices to prove the assertion for $P=\tau(x)^\wedge$ for
any vertex $x$ of $\Z Q$. If we apply $K_R \circ \res$ to the exact sequence
\[
0 \to \tau(x)^\wedge \to \sigma(x)^\wedge \to  S_{\sigma(x)} \to 0 \ko
\]
we obtain the exact sequence
\[
0 \to K_R(\res(\tau(x)^\wedge)) \to K_R(\sigma(x)^\wedge) \to K_R(S_{\sigma(x)}).
\]
By part a), we have an isomorphism 
$\sigma(x)^\wedge \iso K_R(\sigma(x)^\wedge)$ and we have a monomorphism
$K_R(S_{\sigma(x)}) \to \sigma(x)^\vee$. Thus, we have an exact sequence
\[
0 \to K_R(\res(\tau(x)^\wedge)) \to \sigma(x)^\wedge \to \sigma(x)^\vee
\]
and one checks that the morphism $\sigma(x)^\wedge \to \sigma(x)^\vee$ 
is non zero. Thus, its image is $S_{\sigma(x)}$ and we find that 
$K_R(\res(\tau(x)^\wedge))$ is the kernel of $\sigma(x)^\wedge \to S_{\sigma(x)}$.
But this is $\tau(x)^\wedge$.

c) By definition, $K_{LR}(P)$ is the sum of the images in $K_R(P)$ of all the
morphisms 
\[
K_L(\res(\sigma(x)^\wedge)) \to K_R(\res(P))
\] 
induced by morphisms $\sigma(x)^\wedge \to P$. Now trivially, we have 
$K_L(\res(\sigma(x)^\wedge)) \iso \sigma(x)^\wedge$ and by part a),
we have $P \iso K_R(\res(P))$. This implies the claim.
\end{proof}

The following lemma will be of great use.

\begin{lemma} \label{lemma:acyclic-three-term}
Let $x$ be a vertex of $\Z Q$. Let $P$ be a finitely
generated projective $\cs_C$-module and $I$ a finitely
cogenerated injective $\cs_C$-module. 
\begin{itemize}
\item[a)] The image under $\Hom(\res(?), P)$ of the resolution
\begin{equation} 
0 \to (\Sigma^{-1} x)^\wedge \to P_C(x) \to x^\wedge \to 0
\end{equation}
of $x^\wedge_\cd$ constructed in Theorem~\ref{thm:rc-resolutions} 
is acyclic.
\item[b)] The image under $\Hom(I,\res(?))$ of the coresolution
\begin{equation} 
0 \to x^\vee \to I_C(x) \to (\Sigma x)^\vee \to 0.
\end{equation}
of $x^\vee_\cd$ constructed in Theorem~\ref{thm:rc-resolutions} 
is acyclic.
\end{itemize}
\end{lemma}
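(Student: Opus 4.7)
The plan is to reduce both parts of the lemma to a single $\Ext^1$-vanishing in $\mod\cs_C$. For part (a), since $\res$ is exact and $\res(x^\wedge_\cd)=0$ (because $x^\wedge_\cd$ is a module over the quotient $\cR_C/\langle C\rangle$ and hence vanishes at every object of $\cs_C$), applying $\res$ to the given three-term complex produces the short exact sequence
\[
0 \to \res((\Sigma^{-1}x)^\wedge) \to \res(P_C(x)) \to \res(x^\wedge) \to 0
\]
in $\mod\cs_C$, whose middle term is a finitely generated projective. Applying the contravariant functor $\Hom_{\cs_C}(-,P)$ and exploiting the projectivity of $\res(P_C(x))$ via the long exact sequence of Ext-groups, acyclicity of the image three-term complex becomes equivalent to the single vanishing $\Ext^1_{\cs_C}(\res(x^\wedge),P)=0$.

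To establish this, I would pass to $\cR_C$ via the adjunction $\Hom_{\cs_C}(\res(-),P)=\Hom_{\cR_C}(-,K_RP)$. Lemma~\ref{lemma:KLP-iso-KRP}(b), applied to the $\cR_C$-projective $\sigma(z)^\wedge_{\cR_C}$ (whose restriction to $\cs_C$ is $P$), identifies $K_RP$ with $\sigma(z)^\wedge_{\cR_C}$. Splicing the two short exact sequences extracted from the four-term projective resolution of $x^\wedge_\cd$ in Theorem~\ref{thm:rc-resolutions}(a), and using that both $x^\wedge$ and $(\Sigma^{-1}x)^\wedge$ are projective in $\cR_C$, one obtains
\[
\Ext^1_{\cs_C}(\res(x^\wedge),P) \;\cong\; \Ext^2_{\cR_C}(x^\wedge_\cd,\sigma(z)^\wedge_{\cR_C}).
\]
Since $x^\wedge_\cd$ has projective dimension at most $2$ over $\cR_C$, this last Ext is the cokernel of a concrete map between Yoneda-computed Hom-spaces induced by the differential $(\Sigma^{-1}x)^\wedge\to P_C(x)$ of the resolution.

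The remaining step—which is the main obstacle—is the direct verification that this cokernel vanishes. I would exploit the canonical isomorphism $\cR_C(u,\sigma(z))\cong\cR_C(u,\tau(z))$ for any non-frozen vertex $u$, which arises from the fact that $\sigma(z)$ has the unique incoming arrow $\tau(z)\to\sigma(z)$ in $\Z\tilde{Q}$ together with the absence of mesh relations at frozen vertices. Once this isomorphism is in place, surjectivity of the map in question follows by matching it against the mesh-type sequence at $\Sigma^{-1}x$ furnished by Assumption~\ref{main-assumption}. Part (b) is entirely dual: via the adjunction $\Hom_{\cs_C}(I,\res(-))=\Hom_{\cR_C}(K_LI,-)$ and the dual of Lemma~\ref{lemma:KLP-iso-KRP}(b) identifying $K_LI$ with the frozen-vertex injective $\sigma(z)^\vee_{\cR_C}$, acyclicity reduces to the vanishing of $\Ext^2_{\cR_C}(\sigma(z)^\vee_{\cR_C},x^\vee_\cd)$, which is established by the mirror argument starting from the injective coresolution of $x^\vee_\cd$ in Theorem~\ref{thm:rc-resolutions}(a).
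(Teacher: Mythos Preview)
Your reduction is correct and coincides with the paper's: via the adjunction $\Hom_{\cs_C}(\res(-),P)\cong\Hom_{\cR_C}(-,K_R P)$ and Lemma~\ref{lemma:KLP-iso-KRP}, acyclicity of the three-term complex amounts to the vanishing of $\Ext^{\ast}_{\cR_C}(x^\wedge_\cd,\sigma(z)^\wedge)$ (in your packaging, specifically $\Ext^2$). The divergence is in the final step.

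Your proposed endgame has a gap. Assumption~\ref{main-assumption} only supplies the \emph{local} left-exact sequences
\[
0 \to \cR_C(\Sigma^{-1}x,?) \to \bigoplus_{y\to \Sigma^{-1}x}\cR_C(y,?)
\]
involving the immediate neighbours of $\Sigma^{-1}x$ in $\Z\tilde{Q}$. The map whose surjectivity you need, however, is
\[
\Hom_{\cR_C}(P_C(x),\sigma(z)^\wedge)\to\Hom_{\cR_C}((\Sigma^{-1}x)^\wedge,\sigma(z)^\wedge),
\]
and $P_C(x)=\bigoplus_{\sigma(y)\in C}\cd_Q(y,x)\otimes\sigma(y)^\wedge$ is a \emph{global} object: the summands $\sigma(y)^\wedge$ range over all $y$ with $\cd_Q(y,x)\neq 0$, not just neighbours of $\Sigma^{-1}x$, and the differential $(\Sigma^{-1}x)^\wedge\to P_C(x)$ encodes the entire morphism space structure of the derived category, not a single mesh. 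The isomorphism $\cR_C(u,\sigma(z))\cong\cR_C(u,\tau(z))$ you invoke is correct, but it does not bridge this local/global mismatch. No direct ``matching'' against a single mesh sequence will give the surjectivity.

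The paper closes the argument in one line by a different route: since $x^\wedge_\cd$ is finite-dimensional and supported on non-frozen vertices, it has a composition series with factors $S_u$ for non-frozen $u$, so it suffices to show $\RHom_{\cR_C}(S_u,\sigma(z)^\wedge)=0$ for each such $u$. This follows from the duality isomorphism of Corollary~\ref{cor:RHom}(e),
\[
D\RHom(S_u,\sigma(z)^\wedge)\cong\RHom(\sigma(z)^\wedge,\Sigma^2 S_{\tau(u)}),
\]
together with the projectivity of $\sigma(z)^\wedge$ and the fact that $S_{\tau(u)}(\sigma(z))=0$ (as $\tau(u)$ is non-frozen and $\sigma(z)$ is frozen). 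This replaces your unresolved surjectivity computation by a trivial evaluation, and it simultaneously handles all $\Ext^i$ rather than only $\Ext^2$.
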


\begin{proof} a) We have $\Hom(\res(?), P) =\Hom(?, K_R(P))$ and
by Lemma~\ref{lemma:KLP-iso-KRP}, we know that $K_R(P)$ is isomorphic
to $K_L(P)$, which is a finite direct sum of projective $\cR_C$-modules
$\sigma(y)^\wedge$ associated with the vertices $y$ of $\Z Q$. So the
claim is that $\RHom(x^\wedge_\cd, \sigma(y)^\wedge)$ vanishes.
Since $x^\wedge_\cd$ is a finite-dimensional module concentrated
on non frozen vertices, it suffices to show that $\RHom(S_z, \sigma(y)^\wedge)$
vanishes for each non frozen vertex $z$. Now by part  e)
of Corollary~\ref{cor:RHom}, we have
\[
D\RHom(S_z, \sigma(y)^\wedge) = \RHom(\sigma(y)^\wedge, \Sigma^2 S_{\tau(z)})
\]
and the last object is isomorphic to a shift of $D S_{\tau(z)}(\sigma(y))=0$. The
proof of b) is dual.
\end{proof}

\subsection{The weak Gorenstein property} 
\label{ss:Gorenstein-property}
The following lemma implies that the category $\cs_C$ is {\em weakly Gorenstein
of dimension $1$} in the sense that we have
\[
\Ext^p_{\cs_C}(M,P)=0=\Ext^p_{\cs_C}(I,M)
\]
for all $p\geq 2$ and 
each finite-dimensional module $M$, each finitely generated projective
module $P$ and each finitely cogenerated injective module $I$. 

\begin{lemma} \label{lemma:Gorenstein-dimension-1} 
\begin{itemize}
\item[a)] We have $\Ext^p_{\cs_C}(I,M)=0$ for all $p\geq 2$, for
each finitely cogenerated injective $\cs_C$-module $I$ and each pointwise finite-dimensional right bounded $\cs_C$-module $M$. 
\item[b)] We have $\Ext^p_{\cs_C}(M,P)=0$ for all $p\geq 2$, for
each finitely generated projective $\cs_C$-module $P$ and each pointwise finite-dimensional left bounded $\cs_C$-module $M$. 
\end{itemize}
\end{lemma}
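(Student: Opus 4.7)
Parts (a) and (b) are mirror images of one another: applying the $k$-duality $D: \Mod(\cs_C) \to \Mod(\cs_C^{op})$ interchanges finitely generated projectives with finitely cogenerated injectives, and pointwise finite-dimensional right-bounded modules with pointwise finite-dimensional left-bounded modules. Combined with the Serre symmetry of the mesh category (which identifies $\cs_C^{op}$, up to a twist by $\tau$, with a singular Nakajima category of the same kind), this reduces (a) to (b). I therefore concentrate on (b).

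By additivity we may assume $P = \sigma(w)^\wedge_{\cs_C}$. The main step is the simple case $M = S_{\sigma^{-1}(x)}$. Restricting the four-term resolution $0 \to (\Sigma^{-1}x)^\wedge \to P_C(x) \to x^\wedge \to x^\wedge_\cd \to 0$ of Theorem~\ref{thm:rc-resolutions}(a) to $\cs_C$ and invoking $\res(x^\wedge_\cd) = 0$ produces the short exact sequence
\[
0 \to \res((\Sigma^{-1}x)^\wedge) \to \res(P_C(x)) \to \res(x^\wedge) \to 0
\]
of $\cs_C$-modules whose middle term is projective. The long exact sequence for $\Hom_{\cs_C}(-, \sigma(w)^\wedge)$, combined with the exactness of the three-term image guaranteed by Lemma~\ref{lemma:acyclic-three-term}(a), forces $\Ext^1_{\cs_C}(\res(x^\wedge), \sigma(w)^\wedge) = 0$. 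Iterating the same argument with $\Sigma^{-k}x$ in place of $x$ produces dimension shifts that extend this to $\Ext^p_{\cs_C}(\res(x^\wedge), \sigma(w)^\wedge) = 0$ for every $p \geq 1$. Combining with the short exact sequence $0 \to \res(x^\wedge) \to \sigma^{-1}(x)^\wedge \to S_{\sigma^{-1}(x)} \to 0$ coming from Lemma~\ref{lemma:resolutions}(a) and the projectivity of $\sigma^{-1}(x)^\wedge$ in $\cs_C$, the resulting long exact sequence yields $\Ext^p_{\cs_C}(S_{\sigma^{-1}(x)}, \sigma(w)^\wedge) = 0$ for every $p \geq 2$, which is the simple case.

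To pass from the simple case to an arbitrary pointwise finite-dimensional left-bounded $M$, I use the socle filtration $\operatorname{soc}^n(M)$, whose successive quotients are semisimple. For any semisimple $N$, $\Ext^p_{\cs_C}(N, \sigma(w)^\wedge) = \prod_v \Ext^p_{\cs_C}(S_v, \sigma(w)^\wedge) = 0$ for $p \geq 2$ by the simple case. An induction on $n$ via the long exact sequence in $\Ext$ then yields the vanishing for each $\operatorname{soc}^n(M)$, and the colimit $M = \bigcup_n \operatorname{soc}^n(M)$ is handled by a Milnor triangle / $\lim^1$ argument in the spirit of the proof of Lemma~\ref{lemma:exactness}: the pointwise finite-dimensionality of $M$ together with the right-boundedness of $\sigma(w)^\wedge$ force each $\Ext^1_{\cs_C}(\operatorname{soc}^n M, \sigma(w)^\wedge)$ to be finite-dimensional, so the Mittag-Leffler condition holds and the $\lim^1$ term vanishes. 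The two main technical hurdles will be the exhaustiveness of the socle filtration for left-bounded pointwise finite-dimensional modules (forced by the directedness of $\cs_C$ from Assumption~\ref{as:directed-category} together with the left-boundedness) and the $\lim^1$ control itself, since $\Ext^p(-, P)$ does not commute with directed colimits in general.
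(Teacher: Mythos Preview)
Your overall strategy matches the paper's: reduce to the simple case using the spliced resolutions of Theorem~\ref{thm:rc-resolutions} together with Lemma~\ref{lemma:acyclic-three-term}, then pass to a general $M$ by a filtration and a Mittag--Leffler/$\lim^1$ argument. The paper proves~(a) directly and declares~(b) dual; you do the opposite, which is fine, and your treatment of the simple case is correct.

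The one point that needs repair is the second step. You filter $M$ by its socle series and then assert that each $\Ext^1_{\cs_C}(\operatorname{soc}^n M,\sigma(w)^\wedge)$ is finite-dimensional in order to invoke Mittag--Leffler. But $\operatorname{soc}^n M$ need not be finite-dimensional (a left-bounded pointwise finite-dimensional module can have infinite semisimple layers), and finite-dimensionality of these $\Ext^1$ groups is not obvious from what you have. The paper sidesteps this entirely by writing $M$ as a countable inverse limit (for~(a)) or, dually, a countable union (for~(b)) of \emph{finite-dimensional} modules $M_i$; then the first step gives finite-dimensionality of all $\Ext^p(M_i,P)$ immediately, and Mittag--Leffler applies without further work. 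Your socle-filtration version can still be salvaged, but by a different mechanism: from the short exact sequence $0\to\operatorname{soc}^n M\to\operatorname{soc}^{n+1}M\to\operatorname{soc}^{n+1}/\operatorname{soc}^n\to 0$ and the vanishing $\Ext^2(\text{semisimple},P)=0$ you have already established, the transition maps $\Ext^1(\operatorname{soc}^{n+1}M,P)\to\Ext^1(\operatorname{soc}^n M,P)$ are \emph{surjective}, and surjective inverse systems have $\lim^1=0$. So replace the finite-dimensionality claim by this surjectivity observation and the argument goes through.
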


\begin{proof} a) We may and will assume that $I=\sigma(x)^\vee$ for some vertex
$x$ in $C$. 

{\em First step: If $M$ is finite-dimensional, then $\Ext^p_{\cs_C}(\sigma(x)^\vee, M)$ is finite-dimensional for all integers $p$ and vanishes for all $p\geq 2$.}
It suffices to prove the statement for a simple module $M=S_{\sigma(y)}$.
Now the injective resolution of $S_{\sigma(y)}$ in part a) of
Theorem~\ref{thm:res-simple-SC-modules} is the complex
of $\cs_C$-modules
\[
0 \to \sigma(y)^\vee \to I_C(y) \to I_C(\Sigma y) \to I_C(\Sigma^2 y) \to \ldots
\]
which is spliced together from $\sigma(y)^\vee \to y^\vee$ and the
sequences
\[
(\Sigma^{p-1} y)^\vee \to I_C(\Sigma^{p-1} y) \to (\Sigma^p y)^\vee \ko p\geq 1 \ko
\]
which are extracted from the co-resolutions
\[
0 \to (\Sigma^{p-1} y)^\vee_\cd \to (\Sigma^{p-1} y)^\vee \to I_C(\Sigma^{p-1} y)
\to (\Sigma^p y)^\vee \to 0
\]
constructed in part a) of Theorem~\ref{thm:rc-resolutions}. Now
the fact that $\Ext^p_{\cs_C}(\sigma(x)^\vee, S_\sigma(y))$ vanishes
for $p\geq 2$ follows from Lemma~\ref{lemma:acyclic-three-term}.

{\em Second step: The claim.}
Since $M$ is right-bounded and pointwise finite-dimensional, it is
the inverse limit of a countable system
\[
\ldots \to M_{i} \to M_{i-1} \to \ldots \to M_1 \to M_0
\]
of finite-dimensional modules. We have 
\[
\RHom(\sigma(x)^\vee, M) = \Rlim \RHom(\sigma(x)^\vee, M_i).
\]
Since the homology of each complex $\RHom(\sigma(x)^\vee, M_i)$ is
finite-dimensional (by the first step), we obtain that
\[
\Ext^p(\sigma(x)^\vee, M) = \lim \Ext^p(\sigma(x)^\vee, M_i)
\]
for each integer $p$. By the first step, this implies the claim.

The proof of b) is dual.
\end{proof}

\begin{question} Are the injective $\cs_C$-modules of projective
dimension at most~$1$ and the projective $\cs_C$-modules of
injective dimension at most~$1$? \end{question}

We do not know the answer if $C$ is the set of all vertices of $\Z Q$.
On the other hand, in certain cases, the classes of projective and injective
$\cs_C$-modules coincide, for example when $C$ is chosen so that
$\cs_C$ is the category of projective modules over the repetitive
algebra of an algebra $B$ derived equivalent to the Dynkin
quiver $Q$ (in particular if $B=kQ$ as in Leclerc-Plamondon's
\cite{LeclercPlamondon12}), cf.~also section~\ref{ss:Frobenius-models}.

\subsection{Coherence} \label{ss:coherence} We consider
the category $\cs_C=\cs$ associated with the set $C$ of all
vertices of $\Z Q$. Let $\ct$ be the full subcategory of $\cR$
whose objects are all the vertices of $\Z Q$.
The following Proposition implies in particular
part a) of Proposition~\ref{prop:properties-singular-category}.

\begin{proposition} 
\begin{itemize}
\item[a)] The category $\ct$ is hereditary and thus coherent.
\item[b)] The category $\cR$ is coherent.
\item[c)] The category $\cs$ is coherent.
\end{itemize}
\end{proposition}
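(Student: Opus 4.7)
The plan is to prove each of the three statements by reducing to simple modules via dévissage and using the explicit projective resolutions supplied by Lemma~\ref{lemma:resolutions} and Theorem~\ref{thm:res-simple-SC-modules}.

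For part (a), I would produce a length-one projective resolution of each simple $\ct$-module by restricting to $\ct$ the length-two resolution
\[
0 \to \tau(x)^\wedge \to \bigoplus_{y \to x} y^\wedge \to x^\wedge \to S_x \to 0
\]
of Lemma~\ref{lemma:resolutions}(c), where $x \in \Z Q_0$. Among the arrows $y \to x$ in $\Z\tilde{Q}$, exactly one has a frozen source, namely the $\sigma$ of the framing arrow $f\colon\tau(x)\to\sigma(x)$, contributing the summand $\sigma(x)^\wedge$ to the middle term. The key observation is that $\sigma(x)^\wedge|_\ct$ is canonically isomorphic to $\tau(x)^\wedge_\ct$ via post-composition with $f$: any morphism from a non-frozen vertex into the frozen $\sigma(x)$ factors through the unique arrow $f$ (surjectivity), and any mesh-relation among such morphisms lifts, by stripping $f$, to a mesh-relation in $\cR(?,\tau(x))$, since $\sigma(x)$ itself carries no mesh relation (injectivity). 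Since the $\sigma(x)^\wedge$-component of the differential $\tau(x)^\wedge \to \bigoplus y^\wedge$ is itself post-composition with $f$, after restriction and identification it becomes the identity on $\tau(x)^\wedge_\ct$. Cancelling this identity summand yields the short exact sequence
\[
0 \to \bigoplus_{y \to x,\ y \in \Z Q_0} y^\wedge_\ct \to x^\wedge_\ct \to S_x \to 0
\]
of $\ct$-modules. Thus every simple $\ct$-module has projective dimension at most one; combined with the directed Hom-finite structure of $\ct$, this implies that $\ct$ is hereditary.

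For part (b), simples of $\cR$ admit finite projective resolutions with finitely generated terms by Lemma~\ref{lemma:resolutions}(a) (length one for $S_{\sigma(x)}$) and Lemma~\ref{lemma:resolutions}(c) (length two for $S_x$). Hence $\cR$ has global dimension at most two, and every finitely presented $\cR$-module, built from finitely many simples by extensions, inherits a finite projective resolution by finitely generated projectives; so $\cR$ is coherent.

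For part (c), simples of $\cs$ have the infinite minimal projective resolutions of Theorem~\ref{thm:res-simple-SC-modules}(a), whose $p$-th term $P_C(\Sigma^{-p}x)$ is a direct sum of projective indecomposables $\sigma(y)^\wedge$ with multiplicities $\dim\cd_Q(y,\Sigma^{-p}x)$. Each such term is finitely generated because $\cd_Q$ is Hom-finite in the Dynkin case and only finitely many $y$ contribute nonzero multiplicities. Therefore every syzygy of a simple $\cs$-module is finitely generated, and the same dévissage as in (b) yields coherence of $\cs$. The main subtlety will be the cancellation step in part (a), which requires verifying that the canonical identification $\sigma(x)^\wedge|_\ct\cong\tau(x)^\wedge_\ct$ is compatible with the mesh-relator differential so that its $\sigma(x)^\wedge$-component becomes exactly the identity.
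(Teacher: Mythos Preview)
Your argument for part (a) is valid and essentially equivalent to the paper's, though the routes differ: the paper directly exhibits an isomorphism $\ct \cong \tilde\ct$ (the path category of $\Z Q$) by writing down an explicit inverse functor $S$ with $S(\alpha_x\beta_x) = -\sum \alpha_i\sigma(\alpha_i)$, whereas you arrive at the same conclusion by computing that the minimal projective resolution of each simple $S_x$ has length one and is exactly the standard resolution in the path category. Your cancellation step is correct. One small point: to pass from ``every simple has projective dimension $\leq 1$'' to ``$\ct$ is hereditary and coherent'' you should either observe that your resolution identifies $\ct$ with the path category of $\Z Q$ (since it shows the quiver of $\ct$ is $\Z Q$ and $\Ext^2_\ct(S_x,S_y)=0$), or run the dual argument to also obtain injective dimension $\leq 1$ for each simple.

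Parts (b) and (c), however, contain a genuine gap. Your d\'evissage rests on the assertion that every finitely presented $\cR$-module (respectively $\cs$-module) is ``built from finitely many simples by extensions''. This is false: the categories $\cR$ and $\cs$ are not locally bounded when $C=\Z Q_0$, so the representable projectives $\sigma(x)^\wedge$ have infinite support and hence infinite length. Indeed, by your own computation in (a), the restriction of $\sigma(x)^\wedge_\cR$ to $\ct$ is $\tau(x)^\wedge_\ct$, and in the path category of the infinite quiver $\Z Q$ this module already has infinite length. So knowing that simples admit resolutions by finitely generated projectives tells you nothing about kernels of arbitrary maps $f:P_1\to P_0$ between finitely generated projectives: the cokernel of such an $f$ is finitely presented but typically of infinite length, and no induction on composition factors is available.

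The paper's proof is genuinely different and avoids this trap. For (b), it argues directly that $\Hom(\ker f, S_u)$ vanishes for almost all vertices $u$, treating frozen and non-frozen vertices separately: for $u=\sigma(x)$ one uses that $S_{\sigma(x)}$ has injective dimension $\leq 1$ to get $\Ext^2(\cok f,S_{\sigma(x)})=0$; for $u=x$ non-frozen one restricts to $\ct$ and invokes the coherence of $\ct$ from part (a) applied to $\res_\ct(f)$. Part (c) is then reduced to (b) by extending along $?\otimes_\cs\cR$ and restricting back, using that $\res_\cs(u^\wedge_\cR)$ is finitely generated for every vertex $u$. You will need an argument along these lines; the d\'evissage to simples cannot be made to work here.
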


\begin{remark}
We do not know under what conditions on $C$ the category
$\cs_C$ is coherent. Clearly, this holds if $\cs_C$ happens to
be locally bounded, i.e. if, for each object $x$ of $\cs_C$, there
are at most finitely many objects $y$ such that $\cs_C(x,y)\neq 0$
or $\cs_C(y,x)\neq 0$. By the proposition, it also holds for $C= \Z Q_0$.
\end{remark}

\begin{proof}[Proof of the proposition] a) Let $\tilde{\cR}$ be 
the path category of $\Z \tilde{Q}$ and
$\tilde{\ct}$ the path category of $\Z Q$. The projection $\tilde{\cR} \to \cR$
induces a functor $P: \tilde{\ct} \to \ct$. It is not hard to see that
there is a well-defined inverse functor $S: \ct \to \tilde{\ct}$ such that
\[
S(\alpha_x \beta_x) = - \sum_{i=1}^s \alpha_i \sigma(\alpha_i)
\]
whenever we have a mesh of $\Z \tilde{Q}$ with arrows
\[
\xymatrix{ 
\tau(x) \ar[r]^{\beta_x} & \sigma(x) \ar[r]^{\alpha_x} & x
}
\]
and arrows
\[
\xymatrix{\tau(x) \ar[r]^{\sigma(\alpha_i)} & y_i \ar[r]^{\alpha_i} & x} \ko 
1\leq i\leq s.
\]
Thus, the category $\ct$ is isomorphic to $\tilde{\ct}$, which is
hereditary since it is the path category of a quiver.

b) Let $f: P_1 \to P_0$ be a morphism in $\proj(\cR)$.
We need to show that its kernel is finitely generated. Since it is
a submodule of $P_1$, it is pointwise finite-dimensional and
right bounded. Thus, it has a projective cover and it suffices
to show that $\Hom(\ker(f), S_u)$ vanishes for all but finitely
many vertices $u$ of $\Z \tilde{Q}$. We first consider vertices
$u$ of the form $\sigma(x)$ for some vertex $x$ of $\Z Q$.
We have the exact sequence
\[
\xymatrix{
0 \ar[r] & \ker(f) \ar[r] & P_1 \ar[r]^f & \im(f) \ar[r] & 0
}
\]
and deduce the exactness of the sequence
\[
\Hom(P_1, S_u) \to \Hom(\ker(f), S_u) \to \Ext^1(\im(f), S_u) \to 0.
\]
Thus, it suffices to show that $\Ext^1(\im(f), S_u)$ vanishes for
all but finitely many $u$. We have
\[
0 \to \im(f) \to P_0 \to \cok(f) \to 0
\]
and so we have
\[
\Ext^1(\im(f), S_u) \iso \Ext^2(\cok(f), S_u).
\]
Now for $u=\sigma(x)$, the module $S_u$ is of injective
dimension at most one and so both terms vanish. We deduce
that $\Hom(P_1, S_u) \to \Hom(\ker(f), S_u)$ is surjective. Thus,
there are at most finitely many vertices $u=\sigma(x)$ such
that $\Hom(\ker(f), S_u)$ is non zero. It remains to study
the case where $u=x$ for some vertex $x$ of $\Z Q$. Now
since $S_x$ is a $\ct$-module, we have an injection
\[
\Hom_{\cR}(\ker(f), S_x) \subset \Hom_{\ct}(\res_\ct(\ker(f)), S_x).
\]
So it suffices to show that the right hand term vanishes
for almost all vertices $x$ of $\Z Q$. Now $\res_\ct(\ker(f))$ identifies
with the kernel of the restriction $\res_\ct(f) : \res_\ct(P_1) \to \res_\ct(P_0)$.
The restriction of a module $\sigma(x)^\wedge$ to $\ct$ is isomorphic
to $\tau(x)^\wedge$ and the restriction of a module $x^\wedge_\cR$ to
$x^\wedge_\ct$. Thus the restrictions of $P_0$ and $P_1$ to $\ct$
are finitely generated projective and by part a), the kernel
$\ker(\res_\ct(f))$ is finitely generated. This shows that
$\Hom(\res_\ct(\ker(f)), S_x)$ vanishes for almost all vertices
$x$ of $\Z Q$. 

c) Let $f: P_0 \to P_1$ be a morphism of $\proj(\cs)$. Then 
\[
f \ten_\cs \cR : P_1 \ten_\cs \cR \to P_0 \ten_\cs \cR
\]
is a morphism of $\proj(\cR)$ and its restriction to $\cs$ identifies
with $f$. We have
\[
\ker(f) \iso \res_\cs(\ker(f \ten_\cs \cR)).
\]
By part b), the module $\ker(f\ten_\cs \cR)$ is finitely generated. Now
the claim follows because for each vertex $u$ of $\Z \tilde{Q}$, the
module $\res_\cs(u^\wedge_\cR)$ is finitely generated: This is
clear for the vertices $u=\sigma(x)$, $x\in \Z Q_0$; for the
vertices $u=x$, $x\in \Z Q$, it follows from part a) of
Theorem~\ref{thm:rc-resolutions}.
\end{proof}

\subsection{Two Frobenius categories} \label{ss:two-Frobenius-categories}
Recall that, for a $k$-category $\cc$, 
a $\cc$-module $M$ is {\em Gorenstein projective} \cite{EnochsJenda95} 
if there is an acyclic complex
\[
P: \ldots \to P_1 \to P_0 \to P_{-1} \to \ldots
\]
of finitely generated projective modules such that $M$ is isomorphic
to the cokernel of $P_1 \to P_0$ and that the complex $\Hom(P,P')$ is
still acyclic for each finitely generated projective $\cc$-module $P'$.
Dually, a $\cc$-module $M$ is {\em Gorenstein injective} if there
is an ayclic complex of finitely cogenerated injective $\cc$-modules
\[
I: \ldots \to I^{-1} \to I^0 \to I^1 \to \ldots
\]
such that $M$ is isomorphic to the kernel of $I^0 \to I^1$ and the complex
$\Hom(I', I)$ is still acyclic for each finitely cogenerated 
injective $\cs_C$-module $I'$. By Proposition~5.1 of 
\cite{AuslanderReiten91}, the full subcategories $\gpr(\cc)$ and
$\gin(\cc)$ formed by the Gorenstein projective, respectively injective,
modules are closed under extensions in $\Mod(\cc)$. It then
follows easily that they are Frobenius exact categories and
that their subcategories of projective--injective objects are
the subcategory of finitely generated projective $\cc$-modules,
respectively finitely cogenerated injective $\cc$-modules.

For each $\cs_C$-module $M$, choose exact sequences
\[
0 \to \Omega M \to P_M \to M \to 0 
\quad\mbox{and}\quad
0 \to M \to I^M \to \Sigma M \to 0
\]
where $P_M$ is projective and $I^M$ injective. For example,
if $x$ is a vertex of $\Z Q$, we can use the restrictions to 
$\cs_C$ of the sequences of $\cR_C$-modules
\[
0 \to x^\wedge \to \sigma^{-1}(x)^\wedge \to S_{\sigma^{-1}(x)} \to 0
\quad \mbox{and} \quad
0 \to S_{\sigma(x)} \to \sigma(x)^\vee \to x^\vee \to 0
\]
so that $\Omega S_{\sigma^{-1}(x)} = \res(x^\wedge)$ and
$\Sigma S_{\sigma(x)} = \res(x^\vee)$.

\begin{lemma} If $M$ is a finite-dimensional $\cs_C$-module, then
the module $\Omega M$ is Gorenstein projective and the module
$\Sigma M$ is Gorenstein injective.
\end{lemma}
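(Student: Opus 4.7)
The plan is to prove that $\Omega M$ is Gorenstein projective; the assertion about $\Sigma M$ then follows by dualizing, using the injective coresolutions of $x^\vee_\cd$ from Theorem~\ref{thm:rc-resolutions}\,a) in place of the projective resolutions of $x^\wedge_\cd$. The strategy has two parts: reduce to simple modules, then construct an explicit two-sided acyclic complex.

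\emph{Reduction to simples.} The class $\gpr(\cs_C)$ is closed under extensions and direct summands (Proposition~5.1 of \cite{AuslanderReiten91}) and contains the finitely generated projectives. For any short exact sequence $0 \to M' \to M \to M'' \to 0$ in $\mod \cs_C$, the horseshoe lemma yields a short exact sequence $0 \to \Omega M' \to \Omega M \to \Omega M'' \to 0$ of first syzygies, possibly for a different choice of $\Omega M$ differing from ours only by a projective summand. Hence $\Omega M',\Omega M'' \in \gpr(\cs_C)$ forces $\Omega M \in \gpr(\cs_C)$, and by induction on composition length it suffices to treat a simple module $M = S_{\sigma(x)}$, $\sigma(x) \in \sigma^{-1}(C)$, for which Lemma~\ref{lemma:resolutions}\,a) restricted to $\cs_C$ identifies $\Omega S_{\sigma(x)}$ with $\res(\tau(x)^\wedge)$.

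\emph{Ext-vanishing.} For any finitely generated projective $\cs_C$-module $Q$ and each $i \geq 1$, dimension shifting along $0 \to \Omega S_{\sigma(x)} \to \sigma(x)^\wedge \to S_{\sigma(x)} \to 0$ gives
\[
\Ext^i_{\cs_C}(\Omega S_{\sigma(x)},Q) \cong \Ext^{i+1}_{\cs_C}(S_{\sigma(x)},Q) = 0
\]
by the weak Gorenstein property (Lemma~\ref{lemma:Gorenstein-dimension-1}\,b)). This simultaneously verifies the $\Ext$ condition for $\Omega S_{\sigma(x)}$ and the exactness of $\Hom_{\cs_C}(-,Q)$ on the left (resolution) half of the sought acyclic complex.

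\emph{Two-sided acyclic complex.} For each $p \in \Z$, apply Theorem~\ref{thm:rc-resolutions}\,a) to the vertex $\Sigma^p\tau(x)$ and restrict to $\cs_C$; since $\res(y^\wedge_\cd) = 0$ for every non-frozen $y$, the four-term exact sequence collapses to a short exact sequence of $\cs_C$-modules
\[
0 \to \res\bigl((\Sigma^{p-1}\tau(x))^\wedge\bigr) \to \res\bigl(P_C(\Sigma^p\tau(x))\bigr) \to \res\bigl((\Sigma^p\tau(x))^\wedge\bigr) \to 0
\]
whose middle term is a finite direct sum of free modules over vertices of $\cs_C$ and hence finitely generated projective. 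Splicing these over all $p \in \Z$ assembles a doubly infinite acyclic complex of finitely generated projective $\cs_C$-modules in which $\Omega S_{\sigma(x)} = \res(\tau(x)^\wedge)$ appears as a cokernel (at the position $p=0$). Lemma~\ref{lemma:acyclic-three-term}\,a) states precisely that each three-term piece remains exact under $\Hom_{\cs_C}(-,Q)$ for every finitely generated projective $Q$, so the splice does too, completing the verification. The main obstacle is this last step: one must simultaneously view $\Omega S_{\sigma(x)}$ as a quotient and as a submodule of finitely generated projectives, and make the two patterns glue coherently across every integer shift. The Dynkin hypothesis is used precisely here to provide the bijection $\Sigma$ on the vertices of $\Z Q$ (so that $\Sigma^p\tau(x)$ is defined for all $p\in\Z$), and Lemma~\ref{lemma:acyclic-three-term} is the technical input ensuring that $\Hom(-,Q)$-acyclicity is preserved through the infinite splice.
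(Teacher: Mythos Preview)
Your proof is correct and follows essentially the same route as the paper's own argument: reduce to simple modules via closure of $\gpr(\cs_C)$ under extensions, then for a simple $S_{\sigma(x)}$ splice together the restrictions to $\cs_C$ of the short exact sequences extracted from Theorem~\ref{thm:rc-resolutions}\,a) (the paper indexes these at $\Sigma^p x$ rather than $\Sigma^p\tau(x)$, but this is the same family up to relabelling), and invoke Lemma~\ref{lemma:acyclic-three-term}\,a) for the $\Hom(-,Q)$-acyclicity. Your explicit horseshoe/Schanuel justification for the reduction step and the separate Ext-vanishing paragraph via Lemma~\ref{lemma:Gorenstein-dimension-1} are more detailed than the paper's two-line reduction, but add no new ideas; the Ext-vanishing is in fact already contained in the acyclicity conclusion from Lemma~\ref{lemma:acyclic-three-term}.
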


\begin{proof} Since the category $\gpr(\cs_C)$ is closed under
extensions in $\Mod(\cs_C)$ it suffices to prove the claim
when $M$ is a simple module associated with a vertex in $C$. 
Let $P$ be the complex obtained by splicing together the
restrictions to $\cs_C$ of the sequences
\[
0 \to (\Sigma^{p-1} x)^\wedge \to P(\Sigma^p x) \to (\Sigma^p x)^\wedge \to 0
\]
extracted from the resolution of $(\Sigma^p x)^\wedge_\cd$ from 
Theorem~\ref{thm:rc-resolutions}, where $p\in\Z$. By 
Lemma~\ref{lemma:acyclic-three-term}, the complex $\Hom(P,P')$ is
still acyclic for each finitely generated $\cs_C$-module $P'$. 
Hence $\Omega S_{\sigma^{-1}(x)} = \res(x^\wedge)$ is
Gorenstein projective. The proof for $\Sigma S_{\sigma(x)}$ is dual.
\end{proof}

For future reference, we record the following easy consequences
of the definition of Gorenstein modules.
\begin{lemma} \label{lemma:computation-ext}
Let $x$ be a non frozen vertex. Let $L$ be a Gorenstein
projective module and $M$ a Gorenstein injective module. We have
isomorphisms
\begin{align*}
\Ext^1(L, \Omega S_{\sigma^{-1}(x)}) &= \Hom(L, S_{\sigma^{-1}(x)})/\Hom(L, \sigma^{-1}(x)^\wedge) \ko \\
\Ext^1(\Sigma S_{\sigma(x)}, M) &= \Hom(S_{\sigma(x)}, M)/\Hom(\sigma(x)^\vee, M).
\end{align*}
\end{lemma}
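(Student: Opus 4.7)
The plan is to derive both isomorphisms by applying $\Hom$ to the two defining short exact sequences for $\Omega S_{\sigma^{-1}(x)}$ and $\Sigma S_{\sigma(x)}$ and then invoking the vanishing of $\Ext^1$ built into the definition of Gorenstein (co)modules.

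For the first isomorphism, I would start from the short exact sequence
\[
0 \to \Omega S_{\sigma^{-1}(x)} \to \sigma^{-1}(x)^\wedge \to S_{\sigma^{-1}(x)} \to 0
\]
exhibited just before the lemma (with $\Omega S_{\sigma^{-1}(x)}$ identified with $\res(x^\wedge)$). Applying $\Hom(L,-)$ yields the long exact sequence
\[
\Hom(L,\sigma^{-1}(x)^\wedge) \to \Hom(L,S_{\sigma^{-1}(x)}) \to \Ext^1(L,\Omega S_{\sigma^{-1}(x)}) \to \Ext^1(L,\sigma^{-1}(x)^\wedge).
\]
Since $L$ is Gorenstein projective and $\sigma^{-1}(x)^\wedge$ is finitely generated projective, the last $\Ext^1$-term vanishes by the very definition of Gorenstein projectivity (the acyclic projective resolution of $L$ remains acyclic after applying $\Hom(-, P')$ for any finitely generated projective $P'$, giving $\Ext^{\geq 1}(L,P')=0$). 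Hence $\Ext^1(L,\Omega S_{\sigma^{-1}(x)})$ identifies with the cokernel of $\Hom(L,\sigma^{-1}(x)^\wedge) \to \Hom(L,S_{\sigma^{-1}(x)})$, which is exactly the claimed quotient (where, as the notation in the statement suggests, $\Hom(L,\sigma^{-1}(x)^\wedge)$ is tacitly replaced by its image under the surjection induced by $\sigma^{-1}(x)^\wedge \twoheadrightarrow S_{\sigma^{-1}(x)}$).

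The second isomorphism is dual: starting from
\[
0 \to S_{\sigma(x)} \to \sigma(x)^\vee \to \Sigma S_{\sigma(x)} \to 0,
\]
apply $\Hom(-,M)$ to obtain the long exact sequence
\[
\Hom(\sigma(x)^\vee,M) \to \Hom(S_{\sigma(x)},M) \to \Ext^1(\Sigma S_{\sigma(x)},M) \to \Ext^1(\sigma(x)^\vee,M),
\]
and observe that the final $\Ext^1$ vanishes because $M$ is Gorenstein injective and $\sigma(x)^\vee$ is finitely cogenerated injective (the dual definition forces $\Ext^{\geq 1}(I',M)=0$ for any finitely cogenerated injective $I'$).

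There is no real obstacle: once one recognises that the two quotients are simply the cokernels in the long exact $\Ext$-sequences attached to the standard short exact sequences defining $\Omega$ and $\Sigma$, the vanishing built into the definitions of $\gpr(\cs_C)$ and $\gin(\cs_C)$ finishes the argument. The only point deserving a line of comment is the minor abuse of notation in the statement, which tacitly replaces $\Hom(L,\sigma^{-1}(x)^\wedge)$ and $\Hom(\sigma(x)^\vee,M)$ by their images in $\Hom(L,S_{\sigma^{-1}(x)})$ and $\Hom(S_{\sigma(x)},M)$ respectively.
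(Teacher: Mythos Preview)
Your proof is correct and follows exactly the same approach as the paper: apply $\Hom(L,?)$ (resp.\ $\Hom(?,M)$) to the defining short exact sequence and use the vanishing of $\Ext^1$ against finitely generated projectives (resp.\ finitely cogenerated injectives) coming from the Gorenstein hypothesis. Your observation about the slight abuse of notation in the quotient is also accurate and worth noting.
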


\begin{proof} This follows at once by applying the functor $\Hom(L,?)$ to the
sequence
\[
0 \to \Omega S_{\sigma^{-1}(x)} \to \sigma^{-1}(x)^\wedge \to S_{\sigma^{-1}(x)} \to 0
\]
and using the fact that $\Ext^1(L, \sigma^{-1}(x)^\wedge)$ vanishes 
because $L$ is Gorenstein projective. Similarly for the second isomorphism.
\end{proof}

\begin{lemma} \label{lemma:surjectivity-Ext1}
Let $L$ and $M$ be finite-dimensional $\cs_C$-modules. Let $L \to I$ be an
injection into an injective module and $P \to M$ a surjection from a projective module.
We have canonical isomorphisms
\begin{align*}
\Ext^1(\Sigma L, \Sigma M)  & \iso \Ext^1(L,M)/\Ext^1(I,M) \mbox{ and } \\
\Ext^1(\Omega L, \Omega M)  & \iso \Ext^1(L,M)/\Ext^1(L,P).
\end{align*}
\end{lemma}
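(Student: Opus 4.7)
The plan is to derive each isomorphism by a double dimension-shift, using the weak Gorenstein property established in Lemma~\ref{lemma:Gorenstein-dimension-1} to make one of the two long exact sequences collapse at exactly the right spot. The two statements are formally dual, so I will describe the argument for the $\Sigma$-isomorphism in detail and indicate the dualization for $\Omega$.

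For the first isomorphism, one may harmlessly assume that $I$ is the injective envelope $I^L$ so that $\Sigma L = I/L$; for a general injective $I' \supset L$, write $I' = I^L \oplus J$ with $J$ injective and observe that the image of $\Ext^1(I',M) \to \Ext^1(L,M)$ agrees with that of $\Ext^1(I^L,M) \to \Ext^1(L,M)$, since the map $L \to J$ is zero. Now apply $\Hom(\Sigma L, ?)$ to the defining sequence $0 \to M \to I^M \to \Sigma M \to 0$: because $I^M$ is injective, $\Ext^i(\Sigma L, I^M)=0$ for all $i\geq 1$, so the long exact sequence collapses to a canonical isomorphism
\[
\Ext^1(\Sigma L, \Sigma M) \iso \Ext^2(\Sigma L, M).
\]
Next, apply $\Hom(?, M)$ to $0 \to L \to I \to \Sigma L \to 0$. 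By part a) of Lemma~\ref{lemma:Gorenstein-dimension-1}, since $I$ is a finitely cogenerated injective $\cs_C$-module and $M$ is finite-dimensional (hence right bounded and pointwise finite-dimensional), we have $\Ext^2(I, M)=0$. Thus the tail of the long exact sequence reads
\[
\Ext^1(I, M) \to \Ext^1(L, M) \to \Ext^2(\Sigma L, M) \to 0 \ko
\]
yielding $\Ext^2(\Sigma L, M) \iso \Ext^1(L,M)/\im(\Ext^1(I,M) \to \Ext^1(L,M))$. Combining the two isomorphisms gives the first claim.

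The second isomorphism follows by the formally dual argument. Apply $\Hom(?, \Omega M)$ to $0 \to \Omega L \to P_L \to L \to 0$: since $P_L$ is projective, $\Ext^i(P_L, ?)=0$ for $i\geq 1$, so $\Ext^1(\Omega L, \Omega M) \iso \Ext^2(L, \Omega M)$. Then apply $\Hom(L, ?)$ to $0 \to \Omega M \to P \to M \to 0$; by part b) of Lemma~\ref{lemma:Gorenstein-dimension-1}, $\Ext^2(L,P)=0$ because $L$ is finite-dimensional and $P$ is finitely generated projective, and the long exact sequence yields $\Ext^2(L,\Omega M) \iso \Ext^1(L,M)/\im(\Ext^1(L,P) \to \Ext^1(L,M))$. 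The independence from the choice of $P$ is handled by the same Schanuel-type observation as in the injective case.

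There is no serious obstacle in the argument: everything is a mechanical combination of long exact sequences with the weak Gorenstein vanishing already proved. The only delicate point is cosmetic, namely verifying that the right-hand sides are insensitive to the specific choices of $I$ and $P$, which reduces to noting that any two such choices differ by an injective (respectively projective) direct summand and that these summands do not contribute to the relevant images in $\Ext^1(L,M)$.
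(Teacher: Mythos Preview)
Your argument is correct and follows essentially the same route as the paper: first shift to $\Ext^2(\Sigma L,M)$ using the injectivity of $I^M$, then apply $\Hom(?,M)$ to $0\to L\to I\to \Sigma L\to 0$ and invoke the weak Gorenstein vanishing $\Ext^2(I,M)=0$ from Lemma~\ref{lemma:Gorenstein-dimension-1}; the $\Omega$-statement is dual. The paper's proof is terser and does not spell out the independence from the choice of $I$ or $P$, which you handle correctly via the injective-envelope/projective-cover reduction.
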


\begin{proof} We have 
$
\Ext^1(\Sigma L, \Sigma M) \iso \Ext^2(\Sigma L, M)
$.
If we apply $\Ext^1(?,M)$ to the sequence 
\[
0 \to L \to I \to \Sigma L \to 0 \ko
\]
we get the first isomorphism. The proof of the second one is dual.
\end{proof}

\subsection{Link to the derived category} \label{ss:link-to-the-derived-category}
The stable categories $\ul{\gpr}(\cs_C)$ and $\ul{\gin}(\cs_C)$
of the Frobenius categories $\gpr(\cs_C)$ respectively $\gin(\cs_C)$
are canonically triangulated. In accordance with our overall convention,
we write $\Sigma$ for their suspension functors. We will show that
these categories are triangle equivalent to $\cd_Q$.

\begin{lemma} \label{lemma:comparison-of-ext}
For all vertices $x$, $y$ of $\Z Q$ and all integers $p$, we have 
isomorphisms
\begin{align*}
\ul{\gin}(\cs_C)(\Sigma S_{\sigma(x)}, \Sigma^p \Sigma S_{\sigma(y)})
 & = \cd_Q(H(x), \Sigma^p H(y)) \\
 & = \ul{\gpr}(\cs_C)(\Omega S_{\sigma^{-1}(x)}, \Sigma^p \Omega S_{\sigma^{-1}(y)}).
\end{align*}
\end{lemma}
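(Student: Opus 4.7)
The plan is to reduce both identities to the computation of extensions between simple $\cs_C$-modules given by Corollary~\ref{cor:ext-between-simples-in-sc}. The bridge from stable Hom to Ext is the standard fact that, for a Frobenius category $\ce$ closed under extensions in an ambient abelian category $\ca$, one has $\ul{\ce}(A,\Sigma^p B) = \Ext^p_\ca(A,B)$ for all $p\geq 1$ and $A, B \in \ce$. By the remarks opening section~\ref{ss:two-Frobenius-categories}, this applies to both $\ce=\gin(\cs_C)$ and $\ce=\gpr(\cs_C)$, so for $p\geq 1$ the lemma reduces to identifying
\[
\Ext^p_{\cs_C}(\Sigma S_{\sigma(x)}, \Sigma S_{\sigma(y)})
\quad\text{and}\quad
\Ext^p_{\cs_C}(\Omega S_{\sigma^{-1}(x)}, \Omega S_{\sigma^{-1}(y)})
\]
with $\cd_Q(H(x),\Sigma^p H(y))$.

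For $p \geq 2$, I would use the defining short exact sequences
\[
0 \to S_{\sigma(x)} \to \sigma(x)^\vee \to \Sigma S_{\sigma(x)} \to 0
\quad\text{and}\quad
0 \to \Omega S_{\sigma^{-1}(y)} \to \sigma^{-1}(y)^\wedge \to S_{\sigma^{-1}(y)} \to 0
\]
to perform a double dimension shift. On the $\gin$-side, applying $\Hom(\Sigma S_{\sigma(x)},-)$ together with injectivity of $\sigma(y)^\vee$ yields $\Ext^p(\Sigma S_{\sigma(x)},\Sigma S_{\sigma(y)}) \iso \Ext^{p+1}(\Sigma S_{\sigma(x)}, S_{\sigma(y)})$ for $p\geq 1$; a second application of $\Hom(-,S_{\sigma(y)})$, combined with the weak Gorenstein vanishing $\Ext^{\geq 2}(\sigma(x)^\vee, S_{\sigma(y)})=0$ of Lemma~\ref{lemma:Gorenstein-dimension-1}(a) (valid since $S_{\sigma(y)}$ is finite-dimensional, hence trivially right-bounded and pointwise finite-dimensional), yields a further isomorphism with $\Ext^p(S_{\sigma(x)}, S_{\sigma(y)})$ provided $p\geq 2$. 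The $\gpr$-side is handled dually, using projectivity of $\sigma^{-1}(y)^\wedge$ and Lemma~\ref{lemma:Gorenstein-dimension-1}(b). Corollary~\ref{cor:ext-between-simples-in-sc} then identifies the resulting Ext-between-simples with $\cd_Q(H(x),\Sigma^p H(y))$; the apparent shift in the $\gpr$-subscripts is absorbed by the fact that $\tau$ is an autoequivalence of $\cd_Q$, so that $\cd_Q(H(\tau^{-1}x),\Sigma^p H(\tau^{-1}y)) \iso \cd_Q(H(x),\Sigma^p H(y))$.

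The main obstacle is the boundary case $p=1$, where naive dimension shifting leaves a cokernel term. Here I would invoke Lemma~\ref{lemma:computation-ext}, which provides the explicit formulas
\begin{align*}
\Ext^1(\Sigma S_{\sigma(x)}, M) & = \Hom(S_{\sigma(x)}, M)/\Hom(\sigma(x)^\vee, M), \\
\Ext^1(L, \Omega S_{\sigma^{-1}(x)}) & = \Hom(L, S_{\sigma^{-1}(x)})/\Hom(L, \sigma^{-1}(x)^\wedge)
\end{align*}
for $L$ Gorenstein projective and $M$ Gorenstein injective. Applied to $M = \Sigma S_{\sigma(y)}$ and to $L = \Omega S_{\sigma^{-1}(y)}$, these express the two desired $\Ext^1$-groups as quotients computable from morphisms in $\cR_C$, which can then be matched against $\Ext^1(S_{\sigma(x)}, S_{\sigma(y)}) = \cd_Q(H(x), \Sigma H(y))$ using the descriptions of $\Sigma S_{\sigma(y)}$ as (the restriction of) $y^\vee$ and of $\Omega S_{\sigma^{-1}(y)}$ as $y^\wedge$ from Lemma~\ref{lemma:resolutions}. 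Finally, the values $p\leq 0$ are recovered by shifting arguments in both stable categories: since $\Sigma$ is an autoequivalence of $\ul{\gin}(\cs_C)$ and of $\ul{\gpr}(\cs_C)$, and $H(\Sigma y)=\Sigma H(y)$, shifting $y$ into $\Sigma^{-p+1} y$ transports any $p\leq 0$ into the already-handled positive range.
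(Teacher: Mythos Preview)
Your treatment of the case $p\geq 2$ is correct and matches the paper's first step almost verbatim: the double dimension shift via the defining sequences, together with the weak Gorenstein vanishing of Lemma~\ref{lemma:Gorenstein-dimension-1} and Corollary~\ref{cor:ext-between-simples-in-sc}, is exactly what is used there. Your observation that the $\tau$-shift appearing on the $\gpr$-side is absorbed by the fact that $\tau$ is an autoequivalence of $\cd_Q$ is also fine.

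Where you diverge from the paper is in the handling of $p\leq 1$. The paper does \emph{not} treat $p=1$ separately. Instead, it first establishes the compatibility
\[
\Sigma^m\,\Sigma S_{\sigma(y)} \;\cong\; \Sigma S_{\sigma(\Sigma^m y)} \quad\text{in }\ul{\gin}(\cs_C)
\]
(and dually on the $\gpr$-side) by restricting the resolutions of Theorem~\ref{thm:rc-resolutions}(a) to $\cs_C$: the restricted sequence
\[
0 \to \res(y^\vee) \to I_C(y) \to \res((\Sigma y)^\vee) \to 0
\]
is a conflation in $\gin(\cs_C)$ with injective middle term, hence identifies $\res((\Sigma y)^\vee)$ with $\Sigma\,\res(y^\vee)$ in the stable category. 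With this in hand, for any $p$ one picks $q\geq 2-p$, rewrites $\Sigma^p Fy$ as $\Sigma^{p+q} F(\Sigma^{-q}y)$, and invokes the already-proven case $p+q\geq 2$. This covers $p=1$ and all $p\leq 0$ in one stroke.

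Your final paragraph gestures at exactly this shifting trick, but you never justify the displayed compatibility; the sentence ``$H(\Sigma y)=\Sigma H(y)$'' concerns the $\cd_Q$-side and does not by itself imply that $y\mapsto \Sigma S_{\sigma(y)}$ intertwines the suspensions. That is the missing step. Once you supply it (via Theorem~\ref{thm:rc-resolutions} as above), your separate attack on $p=1$ through Lemma~\ref{lemma:computation-ext} becomes superfluous --- and as written it is only a sketch: you claim the resulting quotient ``can be matched'' against $\Ext^1_{\cs_C}(S_{\sigma(x)},S_{\sigma(y)})$, but the identification of $\Hom(S_{\sigma(x)},\res(y^\vee))/\Hom(\sigma(x)^\vee,\res(y^\vee))$ with that Ext group is not carried out and is not immediate.
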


\begin{proof} {\em First step: The claim for $p\geq 2$.}
Let us abbreviate $\Sigma S_{\sigma(x)}$ by $Fx$.
We have
\[
\ul{\gin}(\cs_C)(Fx, \Sigma^p Fy)
=\Ext^p_{\cs_C}(Fx,\Sigma S_{\sigma(y)})
=\Ext^{p+1}_{\cs_C}(\Sigma S_{\sigma(x)}, S_{\sigma(y)}).
\]
Now by the sequence
\[
0 \to M \to I^M \to \Sigma M \to 0 \ko
\]
where $M=S_{\sigma(x)}$, and part~a) of Lemma~\ref{lemma:Gorenstein-dimension-1},
this last space is isomorphic to
\[
\Ext^{p}_{\cs_C}(S_{\sigma(x)}, S_{\sigma(y)})=\cd_Q(H(x), \Sigma^p H(y)) \ko
\]
where we have used Corollary~\ref{cor:ext-between-simples-in-sc} 
for the last isomorphism.

{\em Second step: The claim for arbitrary $p$.} Let us first note that by
the sequences
\[
0 \to (\Sigma^{p-1} y)^\vee \to I_C(\Sigma^p y) \to (\Sigma^p y)^\vee \to 0 \ko
\]
of Theorem~\ref{thm:rc-resolutions}, which become exact when restricted to $\cs_C$, 
we have isomorphisms in $\ul{\gin}(\cs_C)$:
\[
\Sigma^m Fy=\Sigma^m\Sigma S_{\sigma(y)}=\Sigma^m \res(y^\vee) = \res((\Sigma^m y)^\vee)=
\Sigma S_{\sigma(\Sigma^m y)} = F \Sigma^m y
\]
for all $m\in \Z$. We deduce that, for a given $p\in\Z$ and any $q\geq 2-p$, we have
\[
\ul{\gin}(\cs_C)(Fx, \Sigma^p Fy)=
\ul{\gin}(\cs_C)(Fx, 
\Sigma^{p+q} F(\Sigma^{-q}y)).
\]
By the first step, this last space is isomorphic to
\[
\cd_Q(H(x), \Sigma^{p+q} (H(\Sigma^{-q}y))) = \cd_Q(H(x), \Sigma^{p} H(y)).
\]
The proof of the second isomorphism is analogous.
\end{proof}

\begin{theorem}  \label{thm:equivalence-with-DQ}
There are  triangle equivalences
\[
F: \cd_Q \iso \ul{\gin}(\cs_C) \quad \mbox{respectively} \quad
F': \cd_Q \iso \ul{\gpr}(\cs_C)
\]
taking $H(x)$ to $\Sigma S_{\sigma(x)}$ respectively
$\Omega S_{\sigma(x)}$ (sic!) for each vertex $x$ of $\Z Q$.
\end{theorem}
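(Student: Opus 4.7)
The plan is to build the triangle equivalence $F: \cd_Q \iso \ul{\gin}(\cs_C)$ in three steps from Lemma~\ref{lemma:comparison-of-ext}; the equivalence $F'$ will be obtained by the dual argument on $\ul{\gpr}(\cs_C)$, using the second isomorphism of that lemma.

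First, I would use Lemma~\ref{lemma:comparison-of-ext} at $p = 0$ to obtain canonical $k$-linear isomorphisms
\[
\cd_Q(H(x), H(y)) \iso \ul{\gin}(\cs_C)(\Sigma S_{\sigma(x)}, \Sigma S_{\sigma(y)})
\]
for all $x, y \in \Z Q_0$. Since $Q$ is Dynkin, Happel's Theorem~\ref{thm:Happel} realises $\cd_Q$ as the additive hull of the $H(x)$. After verifying that these isomorphisms are compatible with composition---the compositions on both sides being controlled by the same mesh-category structure that the proof of Lemma~\ref{lemma:comparison-of-ext} makes explicit---one extends additively to a fully faithful $k$-linear functor $F_0: \cd_Q \to \ul{\gin}(\cs_C)$ taking $H(x)$ to $\Sigma S_{\sigma(x)}$.

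Next, I would promote $F_0$ to a triangle functor. The natural isomorphism $F_0 \circ \Sigma \iso \Sigma \circ F_0$ on objects is already produced in the proof of Lemma~\ref{lemma:comparison-of-ext}: the conflation
\[
0 \to (\Sigma^{p-1} y)^\vee \to I_C(\Sigma^p y) \to (\Sigma^p y)^\vee \to 0
\]
from Theorem~\ref{thm:rc-resolutions}, restricted to $\cs_C$, exhibits $\Sigma S_{\sigma(\Sigma y)} \iso \Sigma(\Sigma S_{\sigma(y)})$ in $\ul{\gin}(\cs_C)$. For preservation of distinguished triangles, it suffices (since for Dynkin $Q$ the category $\cd_Q$ is generated as a triangulated category by its Auslander--Reiten triangles based at vertices of a section) to identify the image of each mesh/AR triangle $H(\tau x) \to \bigoplus_{y \to x} H(y) \to H(x) \to \Sigma H(\tau x)$ with the triangle of $\ul{\gin}(\cs_C)$ induced by the AR conflation of $\gin(\cs_C)$ associated to the vertex $x$; this conflation exists under the configurational form of the equivalence $\inj(\cR_C) \iso \gin(\cs_C)$ (Theorem~\ref{thm:regular-category-as-Auslander-category}, to be established in this section). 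That same structural theorem identifies the Auslander--Reiten quiver of $\gin(\cs_C)$ with $\Z \tilde{Q}$, the frozen vertices corresponding to projective--injective objects; modulo projective--injectives the indecomposables of $\gin(\cs_C)$ are thus precisely the $\Sigma S_{\sigma(x)}$, $x \in \Z Q_0$, which together with Krull--Schmidt-ness yields essential surjectivity.

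The main obstacle will be the triangle-functor step: one must make the isomorphism $F_0 \Sigma \iso \Sigma F_0$ coherent and ensure preservation of \emph{all} distinguished triangles, not merely the Auslander--Reiten ones. A cleaner alternative is to observe that $T := \bigoplus_{i \in Q_0} \Sigma S_{\sigma(i, 0)}$ is a classical tilting object of $\ul{\gin}(\cs_C)$: Lemma~\ref{lemma:comparison-of-ext} yields $\End(T) \cong kQ^{\mathrm{op}}$ together with the vanishing of $\Ext^p(T, T)$ for $p \neq 0$, while the Auslander--Reiten quiver description shows that $T$ classically generates $\ul{\gin}(\cs_C)$. A derived Morita statement in the style of Keller then furnishes the triangle equivalence directly, sending $kQ$ to $T$.
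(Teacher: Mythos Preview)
Your tilting alternative is essentially the paper's opening move: from Lemma~\ref{lemma:comparison-of-ext} one gets that $T=\bigoplus_{i\in Q_0}\Sigma S_{\sigma(i,0)}$ has $\End(T)\cong kQ$ (not $kQ^{\mathrm{op}}$) and vanishing higher self-extensions, and since $\ul{\gin}(\cs_C)$ is an algebraic triangulated category this immediately produces a fully faithful \emph{triangle} functor $F:\cd_Q\to\ul{\gin}(\cs_C)$ with $F(P_i)=\Sigma S_{\sigma(i,0)}$. This bypasses your first plan of promoting an additive $F_0$ to a triangle functor by hand, which, as you note yourself, is delicate.

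The genuine gap is in essential surjectivity / generation. In both of your approaches you appeal to Theorem~\ref{thm:regular-category-as-Auslander-category} (the equivalence $\inj(\cR_C)\iso\gin(\cs_C)$ and the resulting Auslander--Reiten description). But in the paper that theorem is proved \emph{after} and \emph{using} the present one: its proof opens with ``By Theorem~\ref{thm:equivalence-with-DQ}, each non injective indecomposable object of $\gin(\cs_C)$ is of the form $\res(x^\vee)$\ldots''. So invoking it here is circular, and you are left with no argument that $T$ generates, or equivalently that the right orthogonal of the image of $F$ vanishes.

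The paper supplies exactly this missing step by an intrinsic socle argument. For any $M\in\ul{\gin}(\cs_C)$ one computes, via Lemma~\ref{lemma:computation-ext},
\[
\Hom_{\ul{\gin}}(F H(x),M)\;\cong\;\Hom(S_{\sigma(x)},M)/\Hom(\sigma(x)^\vee,M),
\]
which shows that $\Hom(F(?),M)$ is a finitely generated cohomological functor on $\cd_Q$ and hence representable; so $F$ has a right adjoint. If $M$ lies in the right orthogonal of $\im F$ and $S_{\sigma(x)}\hookrightarrow M$ is a simple in the socle, the displayed quotient vanishes, so the inclusion extends to $\sigma(x)^\vee\hookrightarrow M$, splitting off an injective summand. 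Induction on the socle dimension then forces $M$ to be injective, i.e.\ zero in $\ul{\gin}(\cs_C)$. This is the non-circular replacement for your appeal to the Auslander--Reiten quiver.
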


\begin{remark} Let $kQ$ denote the path category of $Q$ considered
as a full subcategory of $\cR_C$ via the embedding $i \mapsto (0,i)$. We
have a functor $kQ \to \gpr(\cs_C)$ taking $x$ to $\res(x^\wedge)$.
It gives rise to a $kQ$-$\cs_C$-bimodule $X$ given by
\[
X(u,x) = \Hom(u^\wedge, \res(x^\wedge)) , \quad x\in Q_0 , u \in \sigma(C).
\]
Since $\gpr(\cs_C)$ is a Frobenius category, we have a canonical
triangle functor
\[
\can: \cd^b(\gpr(\cs_C)) \to \ul{\gpr}(\cs_C)
\]
cf.~for example \cite{KellerVossieck87}, \cite{Rickard89b}. Now
we can describe the equivalence $F': \cd_Q \iso \ul{\gpr}(\cs_C)$ as the
composition
\[
\xymatrix@C=1.2cm{
\cd_Q \ar[r]^-{?\lten_{kQ} X} &  \cd^b(\gpr(\cs_C)) \ar[r]^-{\can} &  \ul{\gpr}(\cs_C).
}
\]
Of course, there is an analogous description for $F$.
\end{remark}

\begin{proof} By Lemma~\ref{lemma:comparison-of-ext}, when $x$ and
$y$ are vertices of $Q$ and $p$ is a non zero integer, we have
\[
\ul{\gin}(\cs_C)(\Sigma S_{\sigma(x)}, \Sigma^p S_{\sigma(y)})=
\cd_Q(H(x), \Sigma^p H(y)) =0.
\]
Moreover, the endomorphism algebra of the sum of the $\Sigma S_{\sigma(x)}$,
$x\in Q_0$, is isomorphic to the path algebra $kQ$. 
Since $\ul{\gin}(\cs_C)$ is an algebraic triangulated category, it follows
that we have a fully faithful triangle functor $F: \cd_Q \to \ul{\gin}(\cs_C)$
taking $H(x)$ to $\Sigma S_{\sigma(x)}$ for each $x\in Q_0$. 
Now recall that for an arbitrary vertex $x$ of $\Z Q$, the module 
$\Sigma S_{\sigma(x)}$ is isomorphic to $\res(x^\vee)$. By restricting
the co-resolution
\[
0 \to S_x \to x^\vee \to \bigoplus_{x\to y} y^\vee \to \tau^{-1}(x)^\vee \to 0
\]
of part d) of Lemma~\ref{lemma:resolutions} to $\cs_C$ we obtain an
exact sequence
\[
0 \to  \res(x^\vee) \to \bigoplus_{x\to y} \res(y^\vee) \to \res(\tau^{-1}(x)^\vee) \to 0.
\]
Starting from the vertices of the slice $Q \subset \Z Q$ and `knitting' to the
left and to the right, we use the triangles associated with these sequences
to check that $F$ takes each vertex $x$ of $\Z Q$ to $\res(x^\vee) =\Sigma S_{\sigma(x)}$.

Now fix an object $M$ in $\ul{\gin}(\cs_C)$. Clearly the functor
\[
\Hom(F(?), M) : \cd_Q^{op} \to \Mod k
\]
is cohomological. Moreover, the isomorphism
\[
\Hom(F H(x), M) = \Hom(S_{\sigma(x)}, M)/\Hom(\sigma(x)^\vee, M) \ko
\]
where morphisms on the left are taken in $\ul{\gin}(\cs_C)$ and those
on the right in $\Mod(\cs_C)$, shows that $\Hom(F H(x), M)$ is only
non zero if $S_{\sigma(x)}$ occurs in the socle of $M$ and that its
dimension is bounded by the dimension of the socle of $M$. Since
$M$ is a submodule of a finite sum of modules $\sigma(y)^\vee$, $y\in \Z Q_0$,
it follows that $\Hom(F(?), M)$ takes values in the finite-dimensional
vector spaces and vanishes on all but finitely many indecomposable
objects of $\cd_Q$. In particular, $\Hom(F(?), M)$ is a finitely generated
cohomological functor on $\cd_Q$. This implies that it is representable. Thus,
the functor $F$ admits a right adjoint $F_\rho$ and for each object
$M$ of $\ul{\gin}(\cs_C)$, we have a canonical triangle
\[
F F_\rho M \to M \to GM \to \Sigma F F_\rho M \ko
\]
where $GM$ is right orthogonal to the image of $\cd_Q$ under $F$. We
will show that this right orthogonal subcategory vanishes. Indeed, suppose
that $M$ is an object in the right orthogonal. Since $M$ is a submodule
of a finite direct sum of modules $\sigma(y)^\vee$, $y\in \Z Q_0$, it
has a finite-dimensional socle. We proceed by induction on its dimension.
If it is zero, then $M$ has to be zero. So suppose that $S_{\sigma(x)}$ is
a simple submodule in the socle of $M$. Since $M$ is right orthogonal
to the image of $F$, we have
\[
0=\Hom(F H(x), M) = \Hom(S_{\sigma(x)}, M)/\Hom(\sigma(x)^\vee, M).
\]
Thus the inclusion $S_{\sigma(x)} \to M$ extends to a map
$\sigma(x)^\vee\to M$. This map is injective since it induces an injection
in the socles. Since $\sigma(x)^\vee$ is an injective module, it is actually
a direct summand and $M$ is the direct sum of $\sigma(x)^\vee$ and
a submodule $M'$, whose socle is of strictly smaller dimension than
that of $M$ and which still belongs to the right orthogonal of the image of $F$.
By the induction hypothesis, $M'$ must be injective and so $M$ is injective. 
\end{proof}

\subsection{Description of $\Phi$ via Kan extensions} 
\label{ss:description-of-Phi-via-Kan-extensions}
Let 
\[
\Phi: \mod(\cs_C) \to \cd_Q
\]
be the composition of $\Omega: \mod(\cs_C) \to \ul{\gpr}(\cs_C)$
with the quasi-inverse of the equivalence $F' : \cd_Q \to \ul{\gpr}(\cs_C)$
of Theorem~\ref{thm:equivalence-with-DQ}. Notice that $\Phi$
is a $\delta$-functor as the composition of the $\delta$-functor
$\Omega$ with a triangle equivalence. Equivalently, we could
define $\Phi$ as the composition of $\Sigma: \mod(\cs_C) \to \ul{\gin}(\cs_C)$
with the quasi-inverse of the equivalence $F: \cd_Q \to \ul{\gin}(\cs_C)$.

Let us now prove Proposition~\ref{prop:description-KK-CK}, which claims
that for $M\in\mod(\cs)$, we have functorial isomorphisms 
of $k(\Z Q)$-modules
\[
KK(M) = \Hom_{\cd_Q}(H(?), \tau\Phi(M)) 
\mbox{ and } 
CK(M) = D \Hom_{\cd_Q}(\Phi(M), H(?))  \ko
\]
where $H$ is Happel's embedding (Theorem~\ref{thm:Happel}).

\begin{proof}[Proof of Proposition~\ref{prop:description-KK-CK}] 
We only prove the second isomorphism, the proof of
the first one being similar.
Let $P\to M$ be a surjection with finitely generated projective $P$.

{\em First step: We have a canonical isomorphism
\[
\cok(K_R(P) \to K_R(M)) \iso CK(M) \ko
\]
} 
Indeed, by definition,
$CK(M)$ is the cokernel of the canonical morphism $K_L(M) \to K_R(M)$.
Now we have a commutative diagram
\[
\xymatrix{
K_L(P) \ar[d] \ar[r] & K_R(P) \ar[d] \\
K_L(M) \ar[r] & K_R(M).
}
\]
Here the top horizontal arrow is an isomorphism by Lemma~\ref{lemma:KLP-iso-KRP}
and the left vertical arrow is surjective since $K_L$ is right exact. The claim follows.

{\em Second step: For each vertex $x$ of $\Z Q$, we have a canonical 
isomorphism
\[
(K_R(M)/K_R(P))(x) \iso \Ext^1(S_{\sigma^{-1}(x)}, M)/\Ext^1(S_{\sigma^{-1}(x)}, P).
\]
}
Recall the sequence
\[
0 \to \res(x^\wedge) \to \sigma^{-1}(x)^\wedge \to S_{\sigma^{-1}(x)} \to 0.
\]
It shows that $\Omega(S_{\sigma^{-1}(x)})$ is isomorphic to $\res(x^\wedge)$.
Now we have isomorphisms
\[
(K_R(M))(x) = \Hom(x^\wedge, K_R(M)) = \Hom(\res(x^\wedge), M) =
\Hom(\Omega S_{\sigma^{-1}(x)}, M)
\]
and similarly for $P$ instead of $M$. Now by definition, we have
\[
\Ext^1(S_{\sigma^{-1}(x)}, M) = 
\Hom(\Omega S_{\sigma^{-1}(x)}, M)/\Hom(\sigma^{-1}(x)^\wedge, M)
\]
and similarly for $P$ instead of $M$. The claim follows because each
morphism $\sigma^{-1}(x)^\wedge \to M$ factors through $P\to M$. 

{\em Third step: For each vertex $x$ of $\Z Q$, we have canonical 
isomorphisms
\begin{align*}
\Ext^1(S_{\sigma^{-1}(x)}, M)/\Ext^1(S_{\sigma^{-1}(x)}, P)  & \iso
\ul{\gpr}(\cs_C)(\Omega S_{\sigma^{-1}(x)}, \Sigma \Omega M) \\
& = D \cd_Q(\Phi M, H(x)).
\end{align*}
}
Indeed, since $\Ext^2(S_{\sigma^{-1}(x)}, P)$ vanishes
(Lemma~\ref{lemma:Gorenstein-dimension-1}), we have an isomorphism
\[
\Ext^1(S_{\sigma^{-1}(x)}, M)/\Ext^1(S_{\sigma^{-1}(x)}, P) \iso
\Ext^2(S_{\sigma^{-1}(x)}, \Omega M).
\]
Now we clearly have an isomorphism
\[
\Ext^2(S_{\sigma^{-1}(x)}, \Omega M) = \Ext^1(\Omega S_{\sigma^{-1}(x)}, \Omega M)
\]
and the last space identifies with 
\[
\ul{\gpr}(\cs_C)(\Omega S_{\sigma^{-1}(x)}, \Sigma \Omega M).
\]
Now we have $F' \Phi M = \Omega M$ and 
\[
F'(\tau^{-1} H(x)) = F'(H(\tau^{-1}(x)))= \Omega S_{\sigma(\tau^{-1}(x))} = 
\Omega S_{\sigma^{-1}(x)} \ko
\]
whence the isomorphism
\[
\ul{\gpr}(\cs_C)(\Omega S_{\sigma^{-1}(x)}, \Sigma \Omega M)=
\cd_Q(\tau^{-1} H(x), \Sigma\Phi M).
\]
Finally, we get
\begin{align*}
\cd_Q(\tau^{-1} H(x), \Sigma \Phi M) &= \cd_Q(H(x), \tau \Sigma \Phi M) 
= \cd_Q(H(x), \nu \Phi M) \\
&= D\cd_Q(\Phi M, H(x)).
\end{align*}
\end{proof}

\subsection{The regular category as a Gorenstein--Auslander category}
\label{ss:regular-category-as-Auslander-category} 
Our goal in this section is to prove 
Theorem~\ref{thm:regular-category-as-Auslander-category}.
We need the following lemma.

\begin{lemma} \label{lemma:kernel-cokernel-adjunction} Let $x$ be a
vertex of $\Z Q$. The adjunction morphisms fit into exact sequences
\[
0 \to x^\vee_\cd \to x^\vee \to K_R \res(x^\vee) \to (\Sigma x)^\vee_\cd \to 0
\]
and
\[
0 \to (\Sigma^{-1} x)^\wedge_\cd \to K_L \res(x^\wedge) \to x^\wedge 
\to x^\wedge_\cd  \to 0.
\]
\end{lemma}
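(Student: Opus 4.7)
The plan is to focus on the first sequence in detail, since the second follows by the dual argument in which projective resolutions replace injective coresolutions, $P_C$ replaces $I_C$, and $K_L$ replaces $K_R$. The main idea is to realise $K_R\res(x^\vee)$ concretely as the kernel of a map between two modules that already lie in the image of $K_R$, so that the exact sequence can be read off from the coresolutions of Theorem~\ref{thm:rc-resolutions}(a).

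First I would splice two instances of the coresolution in Theorem~\ref{thm:rc-resolutions}(a), applied at $x$ and at $\Sigma x$, to produce a three-term complex
\[
x^\vee \to I_C(x) \to I_C(\Sigma x)
\]
whose composite vanishes, since it factors as $x^\vee \to I_C(x) \to (\Sigma x)^\vee \to I_C(\Sigma x)$ with two consecutive arrows of the first coresolution. Setting $N := \ker(I_C(x) \to I_C(\Sigma x))$, this factorisation induces a canonical map $x^\vee \to N$, and the central claim is that $N$ is precisely $K_R\res(x^\vee)$ with this canonical map being the adjunction unit. To establish this I would apply Lemma~\ref{lemma:crit-closed}(b): one checks that $\res(N) \iso \res(x^\vee)$ using exactness of $\res$, the vanishings $\res(x^\vee_\cd)=0=\res((\Sigma x)^\vee_\cd)$, and the fact that the second coresolution makes $\res((\Sigma x)^\vee) \hookrightarrow \res(I_C(\Sigma x))$, so that $\res(N)$ coincides with $\ker(\res(I_C(x)) \to \res((\Sigma x)^\vee)) = \res(x^\vee)$; and one checks that $\Hom(S_y, N) = 0 = \Ext^1(S_y, N)$ for every non-frozen vertex $y$ by exploiting that $I_C(x)$ and $I_C(\Sigma x)$ are sums of cofree modules at frozen vertices, hence lie in the image of $K_R$ and are injective, so the long exact sequence for $0 \to N \to I_C(x) \to Q \to 0$ with $Q \hookrightarrow I_C(\Sigma x)$ forces both groups to vanish on $N$.

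Once $N = K_R\res(x^\vee)$ is identified, reading off the kernel and cokernel of the unit $x^\vee \to N$ is essentially automatic. Since $N \hookrightarrow I_C(x)$, the kernel of $x^\vee \to N$ equals the kernel of $x^\vee \to I_C(x)$, which is $x^\vee_\cd$ by the first coresolution. For the cokernel, letting $B$ be the common image of $x^\vee$ in $N$ and in $I_C(x)$, the short exact sequence $0 \to B \to I_C(x) \to (\Sigma x)^\vee \to 0$ embeds $N/B$ into $(\Sigma x)^\vee$ as the preimage of $\ker((\Sigma x)^\vee \to I_C(\Sigma x)) = (\Sigma x)^\vee_\cd$, so $N/B = (\Sigma x)^\vee_\cd$. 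Splicing these identifications yields the desired four-term exact sequence.

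The hard part will likely be keeping the bookkeeping straight in the dual argument for the second sequence, where one instead takes $M := \cok(P_C(\Sigma^{-1} x) \to P_C(x))$ and uses that $P_C(x)$ and $P_C(\Sigma^{-1} x)$ are sums of free modules at frozen vertices and hence lie in the image of $K_L$; beyond this substitution, one verifies $\res(M) \iso \res(x^\wedge)$ and that $M$ is in the image of $K_L$ in the analogous way, and then reads off $(\Sigma^{-1} x)^\wedge_\cd$ as the kernel and $x^\wedge_\cd$ as the cokernel of the counit $M \to x^\wedge$ from the two projective resolutions. The argument is formally dual to the one for $K_R\res(x^\vee)$.
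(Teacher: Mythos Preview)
Your proposal is correct and follows essentially the same route as the paper: both arguments splice the coresolutions of Theorem~\ref{thm:rc-resolutions}(a) at $x$ and $\Sigma x$, identify $K_R\res(x^\vee)$ with $\ker(I_C(x)\to I_C(\Sigma x))$, and then read off the kernel and cokernel of the unit from the resulting diagram. The only difference is cosmetic: the paper obtains the identification $K_R\res(x^\vee)=\ker(I_C(x)\to I_C(\Sigma x))$ directly from the left-exactness of $K_R$ applied to the $\cs_C$-injective coresolution (using that $K_R$ of a frozen cofree $\cs_C$-module is the corresponding cofree $\cR_C$-module), whereas you verify the characterisation of Lemma~\ref{lemma:crit-closed}(b) by hand; the subsequent diagram chase is the same in both.
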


\begin{proof} To compute $K_R \res(x^\vee)$, we use the injective coresolution
\[
\xymatrix{
0 \ar[r] & \res(x^\vee) \ar[r] & I_C(x) \ar[r] & I_C(\Sigma x)
}
\]
obtained by splicing the exact sequences 
\begin{align*}
0 & \to \res(x^\vee) \to I_C(x) \to \res((\Sigma x)^\vee) \to 0 \mbox{ and } \\
0 & \to \res((\Sigma x)^\vee) \to I_C(\Sigma x) \to \res((\Sigma^2 x)^\vee) \to 0
\end{align*}
from part a) of Theorem~\ref{thm:rc-resolutions}. We find that $K_R \res(x^\vee)$
is the kernel of the composition
\[
\xymatrix{
I_C(x) \ar@{->>}[r] & (\Sigma x)^\vee \ar@{->>}[r] &
(\Sigma x)^\vee/(\Sigma x)^\vee_\cd\,\, \ar@{>->}[r] &
I_C(\Sigma x).
}
\]
Thus, $K_R \res(x^\vee)$ is also the kernel of the composition $f$ of the
first two morphisms in this sequence. Now we have the diagram with 
exact rows and columns
\[
\xymatrix{
x^\vee/x^\vee_\cd\,\,\ar@{>->}[r] \ar@{=}[d] & *++{\ker(f)} \ar@{>->}[d] \ar@{->>}[r] & 
*++{(\Sigma x)^\vee_\cd} \ar@{>->}[d] \\
x^\vee/x^\vee_\cd\,\, \ar@{>->}[r]  & I_C(x) \ar@{->>}[d]_f \ar@{->>}[r] & (\Sigma x)^\vee \ar@{->>}[d] \\
   & (\Sigma x)^\vee/(\Sigma x)^\vee_\cd \ar@{=}[r] & (\Sigma x)^\vee/(\Sigma x)^\vee_\cd 
}
\]
This shows the claim. The proof of the second assertion is dual.
\end{proof}

The following theorem implies Theorem~\ref{thm:regular-category-as-Auslander-category}
when we take $C$ to be the set of all vertices of $\Z Q$.

\begin{theorem} \label{thm:regular-category-as-Auslander-category-for-config}
The restriction functor induces
equivalences 
\[
\proj(\cR_C) \to \gpr(\cs_C) \mbox{ and } \inj(\cR_C) \to \gin(\cs_C).
\]
It yields isomorphisms from the quiver $\Z \tilde{Q}_C$ onto
the Auslander--Reiten quivers of $\gpr(\cs_C)$ and $\gin(\cs_C)$
so that the vertices of $C$ correspond to the projective-injective
vertices.
\end{theorem}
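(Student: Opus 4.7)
The plan is to establish the equivalence $\res: \proj(\cR_C) \to \gpr(\cs_C)$ in three stages — well-definedness, full faithfulness, and essential surjectivity — and then to identify the Auslander--Reiten quivers; the second equivalence $\inj(\cR_C) \to \gin(\cs_C)$ will then follow by the dual argument. For well-definedness, I observe that for each frozen vertex $u=\sigma(c)$ with $c\in C$, the restriction $\res(u^\wedge_{\cR_C})$ is the free $\cs_C$-module $\sigma(c)^\wedge_{\cs_C}$, which is projective-injective in $\gpr(\cs_C)$. For a non-frozen vertex $x\in\Z Q_0$, Lemma~\ref{lemma:resolutions} a) identifies $\res(x^\wedge)$ with $\Omega S_{\sigma^{-1}(x)}$, and this is Gorenstein projective by the Gorenstein-projectivity lemma of section~\ref{ss:two-Frobenius-categories}.

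For full faithfulness, since morphisms between direct sums split into morphisms between summands, it suffices to compute morphisms between the indecomposable projectives $u^\wedge$, $u \in \Z\tilde Q_C$. The adjunction together with Lemma~\ref{lemma:KLP-iso-KRP} b), which furnishes $v^\wedge \iso K_R\res(v^\wedge)$, yields
\[
\Hom_{\cs_C}(\res u^\wedge, \res v^\wedge) \iso \Hom_{\cR_C}(u^\wedge, K_R\res v^\wedge) \iso \Hom_{\cR_C}(u^\wedge, v^\wedge).
\]
For essential surjectivity, the projective-injective indecomposables of $\gpr(\cs_C)$ are exactly the $\sigma(c)^\wedge_{\cs_C}$ for $c\in C$, and each one is $\res(\sigma(c)^\wedge_{\cR_C})$. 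For each $x\in\Z Q_0$, the module $\res(x^\wedge)$ has endomorphism ring $k$ by the full faithfulness just proved, hence is indecomposable; and it cannot be projective-injective, for otherwise $\res(x^\wedge) \iso \sigma(c)^\wedge_{\cs_C}$ for some $c\in C$, and applying $K_R$ would force $x^\wedge \iso \sigma(c)^\wedge_{\cR_C}$ in $\cR_C$, contradicting the fact that their tops $S_x$ and $S_{\sigma(c)}$ are non-isomorphic simples. Thus $\{\res(x^\wedge) : x\in\Z Q_0\}$ injects into the isomorphism classes of non-projective indecomposables of $\gpr(\cs_C)$. By Theorem~\ref{thm:equivalence-with-DQ} combined with Happel's Theorem~\ref{thm:Happel}, the latter set is in bijection with $\Z Q_0$, so the injection is a bijection.

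For the identification of Auslander--Reiten quivers, I would use Assumption~\ref{main-assumption}: for each non-frozen vertex $x$ of $\Z\tilde Q_C$, the exact sequence
\[
0 \to \cR_C(?, \tau x) \to \bigoplus_{y\to x} \cR_C(?, y) \to \cR_C(?, x)
\]
restricts to a short exact sequence of Gorenstein projective $\cs_C$-modules with indecomposable end terms. I would verify that this is the almost split conflation in $\gpr(\cs_C)$ ending at $\res(x^\wedge)$: the two end terms both have endomorphism ring $k$, and any non-split epimorphism onto $\res(x^\wedge)$ in $\gpr(\cs_C)$ lifts, via full faithfulness, to a non-split morphism in $\proj(\cR_C)$ targeting $x^\wedge$, which must have image in the radical of $\cR_C(?, x)$; by the mesh structure of $\cR_C$ this radical is generated by the arrows $y\to x$ of $\Z\tilde Q_C$. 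This simultaneously recovers the arrows of the AR-quiver as the arrows of $\Z\tilde Q_C$, the AR-translation as $\tau$, and the projective-injective vertices as the frozen $\sigma(c)$. The principal obstacle will be precisely this verification: rigorously identifying the restricted mesh sequence as almost split in $\gpr(\cs_C)$, which hinges on controlling the Jacobson radical of $\End\bigl(\bigoplus_u \res u^\wedge\bigr)$ inside $\gpr(\cs_C)$ and showing it equals the image of the radical of $\End\bigl(\bigoplus_u u^\wedge\bigr)$ in $\cR_C$. Once this is established, the meshes adjacent to frozen vertices, where the middle sum collects a $\sigma(c)^\wedge$ summand, are handled uniformly by the assumption.
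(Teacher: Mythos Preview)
Your proof is essentially correct and in some respects cleaner than the paper's. Two comments:

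\textbf{Full faithfulness.} Your argument via Lemma~\ref{lemma:KLP-iso-KRP}~b) is more direct than the paper's. The paper argues on the injective side, where the adjunction morphism $y^\vee \to K_R\res(y^\vee)$ is \emph{not} an isomorphism for non-frozen $y$ (its kernel and cokernel are computed in Lemma~\ref{lemma:kernel-cokernel-adjunction}); the paper then has to check, using the duality~(\ref{eq:duality}), that $\Hom(u^\vee,?)$ nonetheless kills both kernel and cokernel. On the projective side your appeal to $v^\wedge \iso K_R\res(v^\wedge)$ bypasses this entirely.

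\textbf{Essential surjectivity.} Your counting step ``the injection is a bijection'' is not valid as stated: an injection from $\Z Q_0$ into a set in bijection with $\Z Q_0$ need not be surjective. What actually closes the argument is the explicit form of the equivalence in Theorem~\ref{thm:equivalence-with-DQ}: it sends $H(x)$ to $\Omega S_{\sigma(x)} \cong \res(\tau(x)^\wedge)$, so every non-projective indecomposable of $\gpr(\cs_C)$ is already of the form $\res(y^\wedge)$ for some $y\in\Z Q_0$. This is exactly how the paper argues (on the injective side).

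\textbf{The Auslander--Reiten quiver.} The ``principal obstacle'' you flag dissolves once full faithfulness is in hand: for indecomposables with local endomorphism rings, a morphism lies in the radical iff it is a non-isomorphism, and a fully faithful additive functor reflects this. Hence $\rad_{\gpr(\cs_C)}(\res u^\wedge,\res v^\wedge)$ is the image of $\rad_{\cR_C}(u^\wedge,v^\wedge)$, and the quiver of $\gpr(\cs_C)$ is that of $\cR_C$, namely $\Z\tilde Q_C$. That the restricted mesh sequences are the almost split conflations then follows because their images in $\ul{\gpr}(\cs_C)\simeq\cd_Q$ are the Auslander--Reiten triangles. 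This is why the paper simply declares the last assertion ``clear''.
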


\begin{proof}
By Theorem~\ref{thm:equivalence-with-DQ}, each non injective 
indecomposable object of $\gin(\cs_C)$
is of the form $\res(x^\vee)$ for some non frozen vertex $x$ of $\Z \tilde{Q}$ and
of course the indecomposable injective object $\sigma(x)^\vee_{\cs_C}$ is the restriction of
$\sigma(x)^\vee_{\cR_C}$. Thus, the restriction functor is essentially 
surjective. Let us show that it is fully faithful. Let $u$ and $v$ be any
vertices of $\Z \tilde{Q}$. We need to show that the 
adjunction morphism
\[
v^\vee \to K_R(\res v^\vee)
\]
induces a bijection
\[
\Hom(u^\vee, v^\vee) \to \Hom(u^\vee, K_R(\res v^\vee))
\]
If $v=\sigma(y)$ for some non frozen vertex $y$, then the
adjunction morphism $\sigma(v)^\vee \to K_R(\res v^\vee)$ is
itself invertible. So let us assume that $v$ is a non frozen
vertex $y$. By Lemma~\ref{lemma:kernel-cokernel-adjunction},
the adjunction morphism $y^\vee \to K_R(\res y^\vee)$ is the
composition of the epimorphism $p$ in the sequence
\begin{equation} \label{eq:left-half-adjunction}
0 \to y^\vee_\cd \to y^\vee \arr{p} y^\vee/y^\vee_\cd \to 0
\end{equation}
with the monomorphism $i$ in the sequence
\begin{equation} \label{eq:right-half-adjunction}
0 \to y^\vee/y^\vee_\cd \arr{i} K_R(\res y^\vee) \to (\Sigma y)^\vee_\cd \to 0.
\end{equation}
The sequence~(\ref{eq:left-half-adjunction}) yields the exact sequence
\[
\Hom(u^\vee, y^\vee_\cd) \to \Hom(u^\vee, y^\vee) \to \Hom(u^\vee, y^\vee/y^\vee_\cd) \to
\Ext^1(u^\vee, y^\vee_\cd).
\]
Now $y^\vee_\cd$ is an iterated extension of objects $S_z$, $z\in \Z Q_0$.
We have
\[
\RHom(u^\vee, S_z) = D\RHom(\Sigma^{-2} S_{\tau^{-1}(z)}, u^\vee)
\]
by Corollary~\ref{cor:RHom} and so
\[
\Hom(u^\vee, S_z) = D \Ext^2(S_z, u^\vee) =0 \mbox{ and }
\Ext^1(u^\vee, S_z) = D \Ext^1(S_z, u^\vee) =0.
\]
Thus, the map $\Hom(u^\vee, p)$ is bijective. The sequence~(\ref{eq:right-half-adjunction})
yields the exact sequence
\[
0 \to  \Hom(u^\vee, y^\vee/y^\vee_\cd) \to \Hom(u^\vee, K_R(\res(y^\vee)) \to
\Hom(u^\vee,(\Sigma y)^\vee_\cd).
\]
We have $\Hom(u^\vee, (\Sigma y)^\vee_\cd)=0$ because $(\Sigma y)^\vee_\cd$
is also an extension of simples $S_z$, $z\in \Z Q_0$. Thus, the map
$\Hom(u^\vee, i)$ is also bijective and the functor
$\res$ is indeed fully faithful on the subcategory $\inj(\cR_C)$.
The proof for $\proj(\cR_C)$ is dual. The last assertion is clear.
\end{proof}

\subsection{Frobenius models for derived categories of Dynkin quivers}
\label{ss:Frobenius-models}
Let $k$ be an algebraically closed field.
In this section, by a {\em Frobenius category}, we mean a $k$-linear,
$\Hom$-finite Krull--Schmidt category $\ce$
endowed with the structure of an exact category for which it is Frobenius.

Let $Q$ be a Dynkin quiver. A {\em Frobenius model for $\cd_Q$} is
a Frobenius category $\ce$ together with a triangle equivalence
$F: \cd_Q \iso \ul{\ce}$. For example, if $C \subset \Z Q_0$ is a set of vertices
satisfying condition (R) of section~\ref{ss:res-simple-RC-modules}, 
then the category $\ce_C = \gpr(\cs_C)$ becomes a Frobenius
model of $\cd_Q$: It is a Frobenius category by 
section~\ref{ss:two-Frobenius-categories} and 
its stable category is equivalent to $\cd_Q$ by
Theorem~\ref{thm:equivalence-with-DQ}.
Now for an arbitrary Frobenius category $\ce$, consider the following properties:
\begin{itemize}
\item[(P1)] For each indecomposable non projective object $X$ of $\ce$, there is an
almost split sequence starting and an almost split sequence ending at $X$.
\item[(P2)] For each indecomposable projective object $P$ of $\ce$, 
the $\ce$-module $\rad_\ce(?,P)$ and the $\ce^{op}$-module
$\rad_\ce(P,?)$ are finitely generated with simple tops.
\item[(P3)] $\ce$ is standard, i.e. its category of indecomposables is equivalent to
the mesh category of its Auslander--Reiten quiver
(cf.~section~2.3, page~63 of \cite{Ringel84}).
\end{itemize}
The existence of almost split triangles in the stable category $\ul{\ce}$
implies condition (P1) so that this condition holds in particular in all
Frobenius models of $\cd_Q$. 
For $\ce=\ce_C=\gpr(\cs_C)$ as above, the category of indecomposables of
$\ce$ is equivalent to the mesh category $\cR_C$,
by Theorem~\ref{thm:regular-category-as-Auslander-category-for-config}.
We deduce that such categories also satisfy (P2) and (P3).
We do not know Frobenius models of $\cd_Q$ which do not
satisfy (P2). On the other hand, in many cases, condition (P3)
does not hold. For example, let us assume that $Q$ is the quiver 
$1 \to 2 \to 3$ and $A$ the algebra given by the quiver
\[
\xymatrix{1 \ar[r]^\beta & 2 \ar[r]^\alpha & 3}
\]
with the relation $\alpha \beta=0$. Then the category $\cc^b(\proj A)$ of
bounded complexes of finitely generated projective $A$-modules
becomes a Frobenius model for $\cd_Q$ because $A$ is
derived equivalent to the path algebra $kQ$. 
It is not hard to compute the Auslander--Reiten quiver
of the category $\ce=\cc^b(\proj A)$ and to check that
it satisfies (P2). However, it is not standard because the
simple $\ce$-module $S_P$ associated with the complex
\[
P = ( \xymatrix{ \ldots \ar[r] & 0 \ar[r] & P_3 \ar@{=}[r] & P_3 \ar[r] & 0 \ar[r] & \ldots} )
\]
is of projective dimension $2$ whereas in a standard Frobenius
category satisfying (P2), the simple module associated with a projective object
is always of projective dimension $\leq 1$.

The Frobenius models of $\cd_Q$ naturally form a $2$-category:
If $(\ce, F)$ and $(\ce', F')$ are two Frobenius models of $\cd_Q$, a
{\em $1$-morphism} $(\ce, F) \to (\ce', F')$ is an exact functor $G:\ce\to \ce'$
together with an isomorphism $\alpha: \ul{G} \circ F \iso F'$. 
We leave it to the reader to define the $2$-morphisms and to
show that a $1$-morphism is an equivalence in this $2$-category
iff its underlying exact functor is an equivalence. For example,
an inclusion $C \supset C'$ of sets of vertices satisfying $(R)$
yields a $1$-morphism $G: \ce_C \to \ce_{C'}$
which annihilates all indecomposable projectives associated with
the vertices in $C$ but not $C'$. The following corollary results from
section~\ref{ss:two-Frobenius-categories}, 
Theorem~\ref{thm:equivalence-with-DQ}, 
Theorem~\ref{thm:regular-category-as-Auslander-category-for-config}
and the above discussion. 

\begin{corollary} \label{cor:Frobenius-models}
The map taking $C$ to $(\ce_C, F_C)$ induces a bijection from the
set of subsets $C\subset \Z Q_0$ satisfying condition (R)  of 
section~\ref{ss:res-simple-RC-modules} onto the set of
equivalence classes of Frobenius models $(\ce, F)$ of $\cd_Q$
satisfying (P2) and (P3). The inverse bijection sends a Frobenius
model $(\ce, F)$ to the set $C\subset \Z Q_0$ such that the indecomposable
projectives of $\ce$ correspond to the vertices $\sigma^{-1}(c)$,
$c\in C$, of the Auslander--Reiten quiver of $\ce$.
\end{corollary}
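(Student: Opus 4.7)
The plan is to verify well-definedness of both maps and then check their mutual inversion. For the forward map $C \mapsto (\ce_C, F_C)$: section~\ref{ss:two-Frobenius-categories} makes $\ce_C := \gpr(\cs_C)$ a Frobenius category, Theorem~\ref{thm:equivalence-with-DQ} equips it with a triangle equivalence $F_C : \cd_Q \iso \ul{\ce_C}$, and Theorem~\ref{thm:regular-category-as-Auslander-category-for-config} identifies its category of indecomposables with the mesh category $\cR_C$, whose Auslander--Reiten quiver is $\Z\tilde{Q}_C$ with projective-injectives precisely at the vertices $\sigma^{-1}(c)$, $c \in C$. Standardness (P3) is then immediate; (P2) follows from the local shape of $\Z\tilde{Q}$ around each frozen vertex (unique incoming arrow from $\tau^{-1}(c)$ and unique outgoing arrow to $\tau^{-2}(c)$); (P1) is automatic because $\cd_Q \simeq \ul{\ce_C}$ has Auslander--Reiten triangles.

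For the reverse map, given $(\ce, F)$ satisfying (P2) and (P3), the stable Auslander--Reiten quiver is identified with $\Z Q$ via $F$ and Happel's embedding, while the full quiver is obtained by attaching, for each indecomposable projective-injective $P$ of $\ce$, a vertex with a single incoming and a single outgoing irreducible morphism. By (P2), this attachment has the mesh-theoretic shape $\tau^{-1}(c_P) \to P \to \tau^{-2}(c_P)$ for a unique $c_P \in \Z Q_0$, and we set $C := \{c_P\}$. To verify condition (R): by (P3), the additive hull of the indecomposables of $\ce$ is equivalent to the mesh category $\cR_C$; since $\ce$ is Frobenius and Krull--Schmidt, each non-projective indecomposable $X_x$ at $x \in \Z Q_0$ has a non-zero morphism into an indecomposable summand $P_c = \sigma^{-1}(c)$ of its injective envelope, which corresponds via (P3) to a non-zero element of $\cR_C(x, \sigma^{-1}(c))$. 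Since no mesh relation is imposed at the frozen vertex $\sigma^{-1}(c)$, post-composition with the arrow $\tau^{-1}(c) \to \sigma^{-1}(c)$ yields an isomorphism $\cR_C(x, \tau^{-1}(c)) \iso \cR_C(x, \sigma^{-1}(c))$, producing a non-zero morphism in $\cR_C(x, \tau^{-1}(c))$; the injective-envelope summand being indecomposable ensures this morphism projects to a non-zero element of $k(\Z Q) = \cR_C/\langle \cs_C \rangle$, which, upon translation by the automorphism $\tau$ of $\Z Q$, yields (R).

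Mutual inversion is now essentially formal: $C \mapsto (\ce_C, F_C) \mapsto C$ is the identity by Theorem~\ref{thm:regular-category-as-Auslander-category-for-config}. Conversely, starting from $(\ce, F)$ yielding $C$ and then $(\ce_C, F_C)$, both $\ce$ and $\ce_C$ are standard Frobenius categories whose Auslander--Reiten quivers are $\Z\tilde{Q}_C$ with matching projective-injective positions; so (P3) forces an additive equivalence $\ce \iso \ce_C$ preserving indecomposable projectives and almost split sequences, hence an equivalence of Frobenius exact structures. Compatibility of the induced stable equivalence with $F$ and $F_C$ is pinned down by their common normalization on the slice $\{(i, 0) : i \in Q_0\}$, which both send to the indecomposable projectives $P_i$ of $kQ$ via Happel's embedding.

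The main obstacle I foresee is the extraction of (R) in the reverse direction: translating the injective-envelope embedding $X_x \hookrightarrow P_c$ in $\ce$ into a non-zero morphism in $k(\Z Q)$ (rather than merely in $\cR_C$) requires controlling whether the resulting path representative factors through some other frozen vertex $\sigma^{-1}(c')$, and thereby becomes zero modulo $\langle \cs_C\rangle$. My proposed workaround uses indecomposability of the injective-envelope summand together with the isomorphism $\cR_C(x, \tau^{-1}(c)) \iso \cR_C(x, \sigma^{-1}(c))$, but an alternative cleaner route is to reformulate (R) as the stable-Hom condition $\ul{\Hom}_\ce(X_x, X_c) = k(\Z Q)(x,c) \neq 0$ and derive it from the presence of sufficiently many morphisms in the Krull--Schmidt Frobenius structure.
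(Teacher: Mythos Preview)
Your overall architecture matches the paper's: well-definedness of the forward map is taken from the discussion preceding the corollary, mutual inversion is essentially formal from standardness (P3), and the only substantive point is verifying that the set $C$ extracted from a given Frobenius model $(\ce,F)$ satisfies condition~(R). You correctly isolate this as the crux and correctly identify the difficulty: a non-zero morphism $X_x \to P$ in $\cR_C$ to an indecomposable projective-injective need not survive in the quotient $k(\Z Q)=\cR_C/\langle \cs_C\rangle$, because its pull-back along the incoming arrow might factor through other frozen vertices.

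However, your proposed resolution does not close this gap. The indecomposability of the injective-envelope summand does not prevent the pulled-back morphism from factoring through some other projective-injective; nor does your ``alternative cleaner route'' of reformulating~(R) as a stable-$\Hom$ non-vanishing, since that is precisely the statement to be proved. The paper fills exactly this gap with a \emph{minimality argument}: among all non-zero morphisms from $X_x$ to an indecomposable projective-injective, choose one corresponding to a path $p: x \to \sigma^{-1}(c)$ of minimal length; write $p$ as the arrow $c \to \sigma^{-1}(c)$ preceded by a path $p': x \to c$. If $p'$ were zero in $k(\Z Q)$, it would vanish in $\ul{\ce}$, hence factor in $\cR_C$ through frozen vertices $\sigma^{-1}(c')$ strictly between $x$ and $c$; but then some summand of this factorization would give a strictly shorter non-zero path from $x$ to a frozen vertex, contradicting minimality. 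This is the missing idea in your proposal.
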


\begin{proof} The only thing left to check is that if $\ce$ is a Frobenius
model of $\cd_Q$ satisfying (P2) and (P3), then the corresponding
set $C$ satisfies condition (R). Indeed, let $x$ be a vertex of
$\Z Q$. Let $X$ be the corresponding indecomposable object
of $\ce$. Since $\ce$ is Frobenius, we can find an inflation
$X \to I$, where $I$ is injective. In particular, there is a non zero
morphism from $X$ to an indecomposable injective object.
Thus, there is a path $p$ from $x$ to $\sigma^{-1}(c)$ for some
$c$ in $C$ such that the class of $p$ in $\cR_C$ is non zero. Let us assume,
as we may, that this path is of minimal length. It is the composition
of the canonical arrow $c \to \sigma^{-1}(c)$ with a path $p'$ from
$x$ to $c$. Suppose that the class of $p'$ in $k(\Z Q)$ vanishes.
Then the morphism corresponding to $p'$ vanishes in the 
stable category $\ul{\ce}$. This implies that the class of $p'$ 
in $\cR_C$ is a linear
combination of paths factoring through vertices $\sigma^{-1}(c')$
which lie between $x$ and $c$ for the ordering given by the
existence of a path. But then we obtain a path with non
zero class in $\cR_C$ from $x$ to some $\sigma^{-1}(c')$,
which contradicts the minimality of the length of $p$.
\end{proof}


\begin{thebibliography}{10}

\bibitem{AssemSimsonSkowronski06}
Ibrahim Assem, Daniel Simson, and Andrzej Skowro{\'n}ski, \emph{Elements of the
  representation theory of associative algebras. {V}ol. 1}, London Mathematical
  Society Student Texts, vol.~65, Cambridge University Press, Cambridge, 2006,
  Techniques of representation theory.

\bibitem{AuslanderReiten91}
Maurice Auslander and Idun Reiten, \emph{Applications of contravariantly finite
  subcategories}, Adv. Math. \textbf{86} (1991), no.~1, 111--152.

\bibitem{BretscherLaeserRiedtmann81}
Otto Bretscher, Christian L{\"a}ser, and Christine Riedtmann,
  \emph{Self-injective and simply connected algebras}, Manuscripta Math.
  \textbf{36} (1981/82), no.~3, 253--307.

\bibitem{CerulliFeiginReineke12}
Giovanni Cerulli~Irelli, Evgeny Feigin, and Markus Reineke,
  \emph{Desingularization of quiver {G}rassmannians for {D}ynkin quivers},
  preprint, arXiv:1209.3960 [math.AG].

\bibitem{CerulliFeiginReineke13}
\bysame, \emph{Homological approach to the {H}ernandez--{L}eclerc construction
  and quiver varieties}, preprint, arXiv:1302.5297 [math.AG].

\bibitem{CrawleyBoevey01}
William Crawley-Boevey, \emph{Geometry of the moment map for representations of
  quivers}, Compositio Math. \textbf{126} (2001), no.~3, 257--293.

\bibitem{EnochsJenda95}
Edgar~E. Enochs and Overtoun M.~G. Jenda, \emph{Gorenstein injective and
  projective modules}, Math. Z. \textbf{220} (1995), no.~4, 611--633.
  \MR{1363858 (97c:16011)}

\bibitem{FominZelevinsky02}
Sergey Fomin and Andrei Zelevinsky, \emph{Cluster algebras. {I}.
  {F}oundations}, J. Amer. Math. Soc. \textbf{15} (2002), no.~2, 497--529
  (electronic).

\bibitem{FrenkelKhovanovSchiffmann05}
Igor Frenkel, Mikhail Khovanov, and Olivier Schiffmann, \emph{Homological
  realization of {N}akajima varieties and {W}eyl group actions}, Compos. Math.
  \textbf{141} (2005), no.~6, 1479--1503.

\bibitem{GabrielRoiter92}
P.~Gabriel and A.V. Roiter, \emph{Representations of finite-dimensional
  algebras}, Encyclopaedia Math. Sci., vol.~73, Springer--Verlag, 1992.

\bibitem{Gabriel62}
Peter Gabriel, \emph{Des cat{\'e}gories ab{\'e}liennes}, Bull. Soc. Math.
  France \textbf{90} (1962), 323--448.

\bibitem{Gabriel80}
Peter Gabriel, \emph{Auslander-{R}eiten sequences and representation-finite
  algebras}, Representation theory, I (Proc. Workshop, Carleton Univ., Ottawa,
  Ont., 1979), Springer, Berlin, 1980, pp.~1--71.

\bibitem{GinzburgVasserot93}
Victor Ginzburg and {\'E}ric Vasserot, \emph{Langlands reciprocity for affine
  quantum groups of type {$A_n$}}, Internat. Math. Res. Notices (1993), no.~3,
  67--85. \MR{1208827 (94j:17011)}

\bibitem{Happel87}
Dieter Happel, \emph{On the derived category of a finite-dimensional algebra},
  Comment. Math. Helv. \textbf{62} (1987), no.~3, 339--389.

\bibitem{Happel88}
\bysame, \emph{Triangulated categories in the representation theory of
  finite-dimensional algebras}, Cambridge University Press, Cambridge, 1988.

\bibitem{HernandezLeclerc12}
David Hernandez and Bernard Leclerc, \emph{Monoidal categorification of cluster
  algebras of type {$A$ and $D$}}, arXiv:1207.3401 [math.QA].

\bibitem{HernandezLeclerc11}
\bysame, \emph{Quantum {G}rothendieck rings and derived {H}all algebras},
  arXiv:1109.0862 [math.QA], to appear in Crelle's Journal.

\bibitem{HernandezLeclerc10}
\bysame, \emph{Cluster algebras and quantum affine algebras}, Duke Math. J.
  \textbf{154} (2010), no.~2, 265--341.

\bibitem{HughesWaschbuesch83}
David Hughes and Josef Waschb{\"u}sch, \emph{Trivial extensions of tilted
  algebras}, Proc. London Math. Soc. (3) \textbf{46} (1983), no.~2, 347--364.
  \MR{693045 (84m:16023)}

\bibitem{Iyama05b}
Osamu Iyama, \emph{{$\tau$}-categories. {I}. {L}adders}, Algebr. Represent.
  Theory \textbf{8} (2005), no.~3, 297--321.

\bibitem{IyamaKalckWemyssYang12}
Osamu Iyama, Martin Kalck, Michael Wemyss, and Dong Yang, \emph{Frobenius
  categories, {G}orenstein algebras and rational surface singularities},
  arXiv:1209.4215 [math.RT].

\bibitem{JensenSuZimmermann05a}
Bernt~Tore Jensen, Xiuping Su, and Alexander Zimmermann,
  \emph{Degeneration-like orders in triangulated categories}, J. Algebra Appl.
  \textbf{4} (2005), no.~5, 587--597.

\bibitem{Kaplansky58}
Irvin Kaplansky, \emph{Projective modules}, Annals of Math. \textbf{68} (1958),
  372--377.

\bibitem{Keller90}
Bernhard Keller, \emph{Chain complexes and stable categories}, Manuscripta
  Math. \textbf{67} (1990), no.~4, 379--417.

\bibitem{Keller91}
\bysame, \emph{Derived categories and universal problems}, Comm. in Algebra
  \textbf{19} (1991), 699--747.

\bibitem{KellerScherotzke13b}
Bernhard Keller and Sarah Scherotzke, \emph{Intermediate extensions and
  desingularization of quiver {G}rassmannians}, preprint, 2013.

\bibitem{KellerVossieck87}
Bernhard Keller and Dieter Vossieck, \emph{Sous les cat{\'e}gories
  d{\'e}riv{\'e}es}, C. R. Acad. Sci. Paris S{\'e}r. I Math. \textbf{305}
  (1987), no.~6, 225--228.

\bibitem{KimuraQin12}
Yoshiyuki Kimura and Fan Qin, \emph{Quiver varieties and quantum cluster
  algebras}, arXiv:1205.2066 [math.RT].

\bibitem{LebruynProcesi90}
Lieven Le~Bruyn and Claudio Procesi, \emph{Semisimple representations of
  quivers}, Trans. Amer. Math. Soc. \textbf{317} (1990), no.~2, 585--598.

\bibitem{Leclerc11}
Bernard Leclerc, \emph{Quantum loop algebras, quiver varieties, and cluster
  algebras}, Representations of algebras and related topics, EMS Ser. Congr.
  Rep., Eur. Math. Soc., Z\"urich, 2011, pp.~117--152. \MR{2931897}

\bibitem{LeclercPlamondon12}
Bernard Leclerc and Pierre-Guy Plamondon, \emph{Nakajima varieties and
  repetitive algebras}, preprint, arXiv:1208.3910 [math.QA].

\bibitem{Lusztig98a}
G.~Lusztig, \emph{On quiver varieties}, Adv. Math. \textbf{136} (1998), no.~1,
  141--182.

\bibitem{MacLane98}
Saunders Mac~Lane, \emph{Categories for the working mathematician}, second ed.,
  Graduate Texts in Mathematics, vol.~5, Springer-Verlag, New York, 1998.

\bibitem{Nakajima13}
Hiraku Nakajima, \emph{Cluster algebras and singular supports of perverse
  sheaves}, arXiv:1301.5079 [math.QA].

\bibitem{Nakajima01}
\bysame, \emph{Quiver varieties and finite-dimensional representations of
  quantum affine algebras}, J. Amer. Math. Soc. \textbf{14} (2001), no.~1,
  145--238 (electronic).

\bibitem{Nakajima11}
\bysame, \emph{Quiver varieties and cluster algebras}, Kyoto Journal of
  Mathematics \textbf{51} (2011), no.~1, 71--126.

\bibitem{Qin12}
Fan Qin, \emph{Alg\`ebres amass\'ees quantiques acycliques}, Ph.~D.~Thesis,
  Universit\'e Paris Diderot -- Paris 7, May 2012.

\bibitem{Qin12a}
\bysame, \emph{$t$-analogue of $q$-characters, bases of quantum cluster
  algebras, and a correction technique}, arXiv:1207.6604 [math.QA].

\bibitem{Rickard89b}
Jeremy Rickard, \emph{Derived categories and stable equivalence}, J. Pure and
  Appl. Algebra \textbf{61} (1989), 303--317.

\bibitem{Riedtmann80}
Christine Riedtmann, \emph{Algebren, {D}arstellungsk{\"o}cher,
  \"{U}berlagerungen und zur{\"u}ck}, Comment. Math. Helv. \textbf{55} (1980),
  no.~2, 199--224.

\bibitem{Riedtmann80a}
\bysame, \emph{Representation-finite self-injective algebras of class
  {$A\sb{n}$}}, Representation theory, II (Proc. Second Internat. Conf.,
  Carleton Univ., Ottawa, Ont., 1979), Lecture Notes in Math., vol. 832,
  Springer, Berlin, 1980, pp.~449--520.

\bibitem{Ringel84}
Claus~Michael Ringel, \emph{Tame algebras and integral quadratic forms},
  Lecture Notes in Mathematics, vol. 1099, Springer Verlag, 1984.

\bibitem{SavageTingley11}
Alistair Savage and Peter Tingley, \emph{Quiver {G}rassmannians, quiver
  varieties and the preprojective algebra}, Pacific J. Math. \textbf{251}
  (2011), no.~2, 393--429.

\bibitem{Shipman10}
Ian Shipman, \emph{On representation schemes and {G}rassmanians of finite
  dimensional algebras and a construction of {L}usztig}, Math. Res. Lett.
  \textbf{17} (2010), no.~5, 969--976.

\bibitem{ToenVaquie07}
Bertrand To{\"e}n and Michel Vaqui{\'e}, \emph{Moduli of objects in
  dg-categories}, Ann. Sci. \'Ecole Norm. Sup. (4) \textbf{40} (2007), no.~3,
  387--444.

\bibitem{Weibel94}
Charles~A. Weibel, \emph{An introduction to homological algebra}, Cambridge
  Studies in Advanced Mathematics, vol.~38, Cambridge University Press, 1994.

\end{thebibliography}

\def\cprime{$'$} \def\cprime{$'$}
\providecommand{\bysame}{\leavevmode\hbox to3em{\hrulefill}\thinspace}
\providecommand{\MR}{\relax\ifhmode\unskip\space\fi MR }
\providecommand{\MRhref}[2]{%
  \href{http://www.ams.org/mathscinet-getitem?mr=#1}{#2}
}
\providecommand{\href}[2]{#2}

\end{document}